\numberwithin{equation}{section}
\theoremstyle{plain}
\newtheorem{prop}{Proposition}[section]
\newtheorem{conj}[prop]{Conjecture}
\newtheorem{coro}[prop]{Corollary}
\newtheorem{lemm}[prop]{Lemma}
\newtheorem{ques}[prop]{Question}
\newtheorem{thrm}{Theorem}
\theoremstyle{definition}
\newtheorem{defi}[prop]{Definition}
\newtheorem{rema*}{Remark}
\newtheorem{rema}[prop]{Remark}
\newtheorem{exam}[prop]{Example}
\renewcommand\aa{a}
\newcommand\bb{b}
\newcommand\BP[1]{B_{#1}^{\scriptscriptstyle\pmb+}}
\newcommand\cc{c}
\newcommand\CC{C}
\newcommand\clp[1]{[#1]}
\newcommand\compl{\mathrm{compl}_{\rev}}
\newcommand\cp{\backslash}
\newcommand\cpR{\backslash_{{}_\RR}}
\newcommand\Diag[1]{D_{#1}}
\newcommand\dist{\mathrm{dist}}
\newcommand\distR{\mathrm{dist}}
\newcommand\distRrev{\mathrm{dist}_{\rev}\!}
\newcommand\DL{D_{\scriptscriptstyle \!L}}
\newcommand\DR{D_{\scriptscriptstyle \!R}}
\newcommand\DRL{D}
\newcommand\ee{e}
\newcommand\eqp{\equiv^{\scriptscriptstyle+}}
\newcommand\eqR{\equiv_\RR}
\newcommand\eqmR{\equiv_\RR^{\scriptscriptstyle-}}
\newcommand\eqpR{\equiv_\RR^{\scriptscriptstyle+}}
\newcommand\ew{\varepsilon}
\newcommand\ff{f}
\newcommand\FF{F}
\let\ge=\geqslant
\newcommand\GG{G}
\newcommand\Gr[2]{\langle#1\mid\nobreak#2\rangle}
\newcommand\ie{{\it i.e.}}
\newcommand\ii{i}
\newcommand\inv{^{-1}}
\newcommand\jj{j}
\newcommand\kk{k}
\newcommand\KKK{\mathcal{K}}
\newcommand\lb{\lambda}
\let\le=\leqslant
\newcommand\mm{m}
\newcommand\MM{M}
\newcommand\Mon[2]{\langle#1\mid\nobreak#2\rangle^{\scriptscriptstyle+}}
\newcommand\Name[3]{\{#1,#2\}_{#3}}
\newcommand\nn{n}
\newcommand\NNNN{\mathbb{N}}
\newcommand\nno{{n-1}}
\newcommand\NL{N_{\scriptscriptstyle \!L}}
\newcommand\NR{N_{\scriptscriptstyle \!R}}
\newcommand\NRL{N}
\newcommand\pp{p}
\newcommand\qq{q}
\newcommand\resp{{\it resp.} }
\newcommand\rev{\curvearrowright}
\newcommand\revd{\circlearrowright}
\newcommand\revdR{ \revd_\RR}
\newcommand\revl{\mathrel{\raisebox{5pt}{\rotatebox{180}{$\curvearrowleft$}}}}
\newcommand\revlR{\revl_{\!\RR}}
\newcommand\revm{\rightsquigarrow}
\newcommand\revmR{\revm_\RR}
\newcommand\revR{ \curvearrowright_\RR}
\newcommand\rr{r}
\newcommand\RR{\mathcal{R}}
\newcommand\RRh{\widehat\RR}
\newcommand\Sep{\Sigma}
\newcommand\SEP[3]{\Sigma_{#1,#2,#3}}
\newcommand\sig[1]{\sigma_{\!#1}^{\relax}}
\newcommand\siginv[1]{\sigma_{\!#1}^{-1}}
\newcommand\sigg[2]{\sigma_{\!#1}^{#2}}
\newcommand\sName[3]{\scriptstyle\{\!#1,#2\!\}_{\!#3}}
\renewcommand\ss{s}
\renewcommand{\SS}{S}
\newcommand{\SSS}{\mathcal{S}}
\newcommand{\SSSh}{\widehat\SSS}
\newcommand\sss{s'}
\newcommand\ssss{s''}
\newcommand\su{\underline{\boldsymbol{u}}}
\newcommand\suu{\su'}
\newcommand\sv{\underline{\boldsymbol{v}}}
\newcommand\svv{\sv'}
\newcommand\sw{\underline{\boldsymbol{w}}}
\newcommand\sww{\sw'}
\newcommand\swww{\sw''}
\newcommand\tta{\mathtt{a}}
\newcommand\ttA{\mathtt{A}}
\newcommand\ttb{\mathtt{b}}
\newcommand\ttB{\mathtt{B}}
\newcommand\ttc{\mathtt{c}}
\newcommand\ttC{\mathtt{C}}
\newcommand\ttd{\mathtt{d}}
\newcommand\ttD{\mathtt{D}}
\newcommand\tte{\mathtt{e}}
\newcommand\ttf{\mathtt{f}}
\newcommand\uu{u}
\newcommand\uuu{u'}
\newcommand\uuuu{u''}
\def\VR(#1,#2){\vrule width0pt height#1mm depth#2mm}
\newcommand\vv{v}
\newcommand\vvv{v'}
\newcommand\vvvv{v''}
\newcommand\ww{w}
\newcommand\www{w'}
\newcommand\xx{x}
\newcommand\XX{X}
\newcommand\xxx{\xx'}
\newcommand\yy{y}
\newcommand\zz{z}
\newcommand\ZZZZ{\mathbb{Z}}
\begin{document}

\hfill{\tiny 2009-12}

\author{Patrick DEHORNOY}

\address{Laboratoire de Math\'ematiques Nicolas Oresme, UMR 6139 CNRS,
Universit\'e de Caen, 14032 Caen, France}
\email{dehornoy@math.unicaen.fr}
\urladdr{//www.math.unicaen.fr/\!\hbox{$\sim$}dehornoy}

\title{The subword reversing method}

\keywords{semigroup presentation, van Kampen diagram, rewrite system, cancellativity, word problem, Garside monoid, group of fractions, monoid embeddability}

\subjclass{20B30, 20F55, 20F36}

\footnote{Work partially supported by the ANR grant ANR-08-BLAN-0269-02}

\begin{abstract}
We summarize the main known results involving subword reversing, a method of semigroup theory for constructing van Kampen diagrams by referring to a preferred direction. In good cases, the method provides a powerful tool for investigating presented (semi)groups. In particular, it leads to cancellativity and embeddability criteria for monoids and to efficient solutions for the word problem of monoids and groups of fractions. 
\end{abstract}

\maketitle

Subword reversing is a combinatorial method for investigating presented semigroup.  It has been developed in various contexts and the results are scattered in different sources~\cite[...]{Dfa, Dff, Dfx, Dgc, Dgp, Dhg, Dhx}. This text is a survey that discusses the main aspects of the method, its range, its uses, and its efficiency. The emphasis is put on the exportable applications rather than on the internal technicalities, for which we refer to literature. New examples and open questions are mentioned, as well as a few new results. Excepted in the cases where no reference is available, proofs are sketched, or just omitted.

\subsection*{General context and main results}

As is well known, working with a semigroup or a group presentation is usually very difficult, and most problems are undecidable in the general case. Subword reversing is one of the few methods that can be used to investigate a presented semigroup, possibly a presented group. The specificity of the method is that, in order to solve the word problem of a presented semigroup, or, equivalently, construct a van Kampen diagram for a pair of initially given words, one directly compares the words one to the other instead of separately reducing each of them to some normal form, as in standard approaches like Knuth--Bendix algorithm or Gr\"obner--Shirshov bases (see Figure~\ref{F:Comparison}).

\begin{figure}[t]
$$\begin{picture}(91,25)(0,1)
\put(0,0.8){\includegraphics{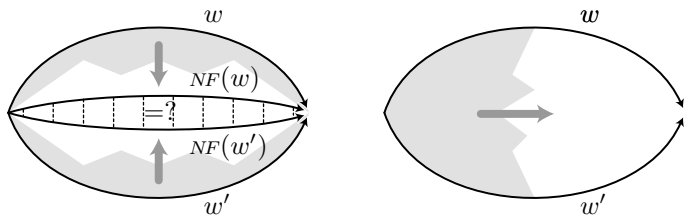}}
\put(27,24.5){$\ww$}
\put(25,16){${\scriptstyle N\!F}(\ww)$}
\put(25,7){${\scriptstyle N\!F}(\www)$}
\put(27,-1){$\www$}
\put(77,24.5){$\ww$}
\put(77,24.5){$\ww$}
\put(77,-1){$\www$}
\put(19,11.5){$=$?}
\end{picture}$$
\caption{\sf Solving the word problem of a presented semigroup: to compare two words~$\ww$ and~$\www$, contrary to methods based on rewrite systems, which separately reduce $\ww$ and~$\www$ to some distinguished equivalent words~${\scriptstyle N\!F}(\ww)$, ${\scriptstyle N\!F}(\www)$ and check the equality of the latter (left diagram), word reversing (right diagram) appeals to no normal form and tries to directly construct a van Kampen diagram by reading the letters from left to right.}
\label{F:Comparison}
\end{figure}

Every semigroup presentation is in principle eligible for subword reversing, but the method leads to useful results only when some condition called completeness is satisfied. The good news is that the completeness condition is satisfied in a number of nontrivial cases and that, even if it is not initially satisfied, it can be satisfied once a certain completion procedure has been performed. 

The general philosophy is that, whenever the completeness condition is fulfilled, some properties of the considered semigroup can be read from the presentation easily. Typically, when a presentation is complete, it is sufficient that the presentation contains no obvious obstruction to left-cancellativity, namely no relation of the form $\ss\vv = \ss \vvv$ with $\vv \not= \vvv$, to be sure that the presented semigroup does admit left-cancellation. Combined with a completeness criterion (several exist), this leads to practical, easy to use, cancellativity criteria, such as the following one.

\begin{thrm} [a criterion for left-cancellatibility]
\label{T:Intro1}
Assume that a semigroup (or a monoid)~$\MM$ admits a presentation~$(\SSS, \RR)$ satisfying the following conditions:

$(i)$ The set~$\RR$ contains no relation $\ss\vv = \ss \vvv$ with $\ss$ in~$\SSS$ and $\vv \not= \vvv$;

$(ii)$ There exists~$\lambda : \MM \to \NNNN$ satisfying $\lambda(\xx\yy) \ge \lambda(\xx) + \lambda(\yy)$ for all~$\xx, \yy$ in~$\MM$ and $\lambda(\ss) \ge 1$ for each~$\ss$ in~$\SSS$; 

$(iii)$ The right cube condition holds for each triple in~$\SSS^3$---see Definition~\ref{D:Cube}.

\noindent Then $\MM$ admits left-cancellation.
\end{thrm}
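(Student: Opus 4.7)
The plan is to apply the central mechanism of the subword reversing method: associate to $(\SSS, \RR)$ the right reversing rewriting system $\cpR$, use condition~(iii) to establish that $\cpR$ is complete, and then extract left-cancellation by combining completeness with condition~(i). Recall that right reversing is a word-rewriting procedure that, applied to a pair of words $\uu, \vv$, tries to produce words $\uu', \vv'$ witnessing an $\RR$-equivalence $\uu \vv' \eqR \vv \uu'$; completeness is the statement that $\cpR$ faithfully detects $\RR$-equivalence via this procedure.

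First I would verify that conditions~(ii) and~(iii) together force $\cpR$ to be complete. The right cube condition is a local confluence statement for triples of generators; the general propagation result of the method lifts this local property to global completeness, the lift being supported by a well-founded induction whose base quantity is the function~$\lambda$ of condition~(ii). This propagation step is the technical heart of the proof and, in my view, the main obstacle: reducing the cube condition on length-three patterns to a statement about arbitrary pairs of words requires a careful diagrammatic induction on reversing, and the subadditivity of~$\lambda$ is precisely what makes this induction well-founded on elements of~$\MM$.

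With completeness secured, left-cancellation follows cleanly. Suppose $\ss\uu \eqR \ss\vv$ with $\ss \in \SSS$. By completeness, reversing the pair $(\ss\uu, \ss\vv)$ terminates with empty outputs. The procedure begins by processing the leading factor~$\ss$ against~$\ss$; condition~(i), which forbids any relation of the form $\ss\xx = \ss\xxx$ with $\xx \ne \xxx$ in~$\RR$, ensures that the only reversing step applicable to this factor erases it. The remaining computation is the reversing of the pair $(\uu, \vv)$, which likewise produces empty outputs, and completeness applied in the converse direction then yields $\uu \eqR \vv$, that is, $\uu = \vv$ in~$\MM$. Cancellation by an arbitrary element of $\MM$ then follows by an elementary induction on the length of a representing word. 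The bulk of the work therefore lies in the completeness step; the cancellation deduction itself is a one-line check at the level of the initial reversing rules.
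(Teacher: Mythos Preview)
Your proposal is correct and follows essentially the same route as the paper: condition~(ii) makes the presentation homogeneous, which together with the cube condition~(iii) on~$\SSS$ yields completeness (the paper's Proposition~\ref{P:Cube}); then completeness plus condition~(i) gives left-cancellation by exactly the argument you describe, namely that in reversing $(\ss\ww)\inv(\ss\www)$ the initial $\ss\inv\ss$ can only be erased (the paper's Proposition~\ref{P:Cancellation}). Your added remark that cancellation by an arbitrary element follows by induction on word length is a correct minor elaboration the paper leaves implicit.
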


Similarly, if, for some generators~$\ss, \sss$, we have in the list of relations several relations of the form $\ss \vvv = \sss \vv$, then, in general, in the corresponding semigroup, the elements~$\ss$ and~$\sss$ admit no least common right-multiple (right-lcm), \ie, no common right-multiple of which every common right-multiple of~$\ss$ and~$\sss$ is a right-multiple. In the case of a complete presentation, it is sufficient that the above obstruction does not occur to be sure that the monoid does admit right-lcm's.

\begin{thrm} [a criterion for the existence of right lcm's]
\label{T:Intro2}
Assume that a semigroup (or a monoid)~$\MM$ admits a presentation~$(\SSS, \RR)$ satisfying the following conditions:

$(i)$ For all~$\ss, \sss$ in~$\SSS$, there is at most one relation of the form $\ss\vvv = \sss \vv$ in~$\RR$;

$(ii)$ There exists~$\lambda : \MM \to \NNNN$ satisfying $\lambda(\xx\yy) \ge \lambda(\xx) + \lambda(\yy)$ for all~$\xx, \yy$ in~$\MM$ and $\lambda(\ss) \ge 1$ for each~$\ss$ in~$\SSS$; 

$(iii)$ The right cube condition holds for each triple in~$\SSS^3$---see Definition~\ref{D:Cube}.

\noindent Then any two elements of~$\MM$ that admit a common right-multiple admit a least common right-multiple.
\end{thrm}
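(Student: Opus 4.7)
The plan is to construct the candidate right-lcm via the subword reversing procedure and then exploit a universal factorization property to verify minimality. Given two elements $\xx, \yy \in \MM$ admitting a common right-multiple, I would choose positive words $\uu, \vv \in \SSS^*$ representing them and apply right reversing to the pattern $\uu\inv \vv$. Hypothesis~(i)---that for each ordered pair $(\ss, \sss)$ at most one relation of the form $\ss \vvv = \sss \vv$ belongs to $\RR$---is precisely what makes each elementary reversing step uniquely determined; thus reversing, viewed as a rewrite on words, is a well-defined partial function rather than merely a relation.

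Next, I would invoke the completeness criterion recalled earlier in the paper: hypotheses~(ii) and (iii)---the sub-additive norm and the right cube condition---together imply completeness of the presentation $(\SSS, \RR)$. From completeness, plus the assumption that $\xx, \yy$ do admit some common right-multiple, one deduces that the reversing of $\uu\inv \vv$ must terminate in a word of the form $\vvv \uuu\inv$ where $\uuu, \vvv \in \SSS^*$ satisfy $\uu \vvv \equiv \vv \uuu$ in $\MM$; the element $\zz$ represented by $\uu \vvv$ is then a distinguished common right-multiple of $\xx$ and $\yy$.

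It remains to show that $\zz$ is a least common right-multiple. Let $\zz^*$ be any common right-multiple and fix positive words $\uu_1, \vv_1 \in \SSS^*$ with $\uu \uu_1 \equiv \vv \vv_1 \equiv \zz^*$ in~$\MM$. My approach is to run reversing on the enlarged pattern $\uu_1\inv \uu\inv \vv \vv_1$: on the one hand, $\uu \uu_1 \equiv \vv \vv_1$ together with completeness forces this word to reverse to the empty word; on the other hand, the reversing can be arranged to factor through the previously computed step $\uu\inv \vv \revm \vvv \uuu\inv$, and the determinism given by~(i) forces any two such factorings to agree. Extracting the ``middle'' of the resulting diagram yields a positive word $\cc$ with $\uu \vvv \cc \equiv \zz^*$, that is, $\zz \cdot [\cc] = \zz^*$ in $\MM$.

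I expect the main obstacle to lie exactly in this final factorization step: turning the informal ``two parsing orders produce the same result'' picture into a rigorous argument requires the full strength of completeness, which controls reversing not merely at its endpoints but along entire rewrite sequences via the cube condition, combined with the uniqueness afforded by~(i). Once this universal property of the reversing output is in hand, the existence of $\cc$ --- and thus of the right-lcm --- follows immediately.
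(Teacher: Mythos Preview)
Your proposal is correct and follows essentially the same route as the paper. The paper combines the completeness criterion (Proposition~\ref{P:Cube}: homogeneity plus the cube condition on~$\SSS$ gives completeness) with Proposition~\ref{P:Lcm}, whose proof is exactly your argument: given a common right-multiple~$\zz^*$ of~$\clp\uu$ and~$\clp\vv$, completeness forces $(\uu\uu_1)\inv(\vv\vv_1)$ to reverse to~$\ew$, and the resulting diagram splits into a $2\times2$ grid whose top-left cell is the reversing $\uu\inv\vv \revR \vvv\,\uuu\inv$, yielding the divisor relation you extract as~$\cc$. The paper treats the grid decomposition as ``a standard argument'' and relies on the determinism from~$(i)$ in the same way you do; your identification of this factorization step as the crux is accurate. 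One small note: your display uses the symbol~$\revm$, which in this paper denotes mixed reversing; you mean~$\revR$.
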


On the other hand, subword reversing is also an algorithmic process, and it can be used to recognize divisors or solve the word problem of the semigroup, and possibly of its enveloping group. Taking for granted the definition of the reversing relation~$\revR$ (see Definition~\ref{D:Reversing}) we have in particular: 

\begin{thrm} [a solution of the word problem]
\label{T:Intro3}
Assume that a group~$\GG$ admits a semigroup presentation\footnote{\ie, all relations are of the form $\vv = \vvv$ with $\vv, \vvv$ nonempty and containing no inverse of the generators} $(\SSS, \RR)$ satisfying the following conditions:

$(i)$ The set~$\RR$ contains no relation $\ss\vv = \ss \vvv$ or $\vv \ss = \vvv \ss$ with $\ss$ in~$\SSS$ and $\vv \not= \vvv$;

$(iii)$ There exists~$\lambda : \Mon\SSS\RR \to \NNNN$ satisfying $\lambda(\xx\yy) \ge \lambda(\xx) + \lambda(\yy)$ for all~$\xx, \yy$ in~$\Mon\SSS\RR$ and $\lambda(\ss) \ge 1$ for each~$\ss$ in~$\SSS$; 

$(iii)$ For all~$\ss, \sss$ in~$\SSS$, there is at most one relation of the form $\ss\vvv = \sss \vv$ in~$\RR$;

$(iv)$ The left and right cube conditions hold for each triple in~$\SSS^3$; 

$(v)$ There exists a set of words in the alphabet~$\SSS$, say~$\SSSh$, that includes~$\SSS$ and is such  that, for all~$\uu, \uuu$ in~$\SSSh$, there exist $\vv, \vvv$ in~$\SSSh$ satisfying $\uu\inv \uuu \revR \vvv \vv\inv$.

\noindent Then a word~$\sw$ in the alphabet~$\SSS \cup \SSS\inv$ represents~$1$ in~$\GG$ if and only if $\vv\inv \vvv \revR\nobreak \ew$ holds, where $\vv$ and~$\vvv$ are the (unique) words in the alphabet~$\SSS$ that satisfy $\sw \revR \nobreak \vvv \vv\inv$.
\end{thrm}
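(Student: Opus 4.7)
The plan is to deduce from conditions (i)--(iv) that the monoid $\MM:=\Mon\SSS\RR$ is cancellative and satisfies the Ore condition on the right, to identify~$\GG$ with its group of right-fractions, and to combine this with the completeness of the presentation to reduce equality in~$\GG$ to the termination of reversing. Condition~(v) will enter only to guarantee that reversing is total on mixed words.

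I would first establish that $\MM$ is two-sided cancellative. Left-cancellativity is Theorem~\ref{T:Intro1} applied with the first half of~(i), the length condition, and the right cube condition from~(iv); right-cancellativity follows from the mirror-symmetric statement using the second half of~(i) and the left cube condition. Theorem~\ref{T:Intro2} applied analogously, using the uniqueness-of-relation assumption, provides right-lcm's whenever common right-multiples exist. To make~$\MM$ right-Ore, I would invoke~(v): for $\ss,\sss\in\SSS$, the reversing $\ss\inv\sss\revR\vvv\vv\inv$ with $\vv,\vvv\in\SSSh$ directly witnesses $\ss\vvv=\sss\vv$ in~$\MM$, so any two generators admit a common right-multiple, and an induction on length extends this to arbitrary pairs of elements of~$\MM$.

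Ore's theorem then embeds $\MM$ into a group of right-fractions, each of whose elements has the form~$\vvv\vv\inv$ with $\vv,\vvv$ positive words. I would identify this group with~$\GG$ via the universal property: the inclusion $\SSS\hookrightarrow\GG$ extends to a surjection from the group of fractions of~$\MM$ onto~$\GG$, and completeness guarantees that it is also injective, i.e.\ that no positive words get identified in~$\GG$ that are not already identified in~$\MM$. Next, for any word~$\sw$ in $\SSS\cup\SSS\inv$, reversing produces a unique pair~$(\vv,\vvv)$ with $\sw\revR\vvv\vv\inv$: uniqueness is immediate from the determinism of~$\revR$ under the uniqueness-of-relation hypothesis, while termination is exactly the role of~(v), which asserts that~$\SSSh$ is closed under reversing and whose combination with the length function~$\lambda$ supplies a well-founded termination measure.

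Once these ingredients are in place, the conclusion is a short unwinding. Each elementary reversing step uses a defining relation of~$\RR$, so $\sw$ and~$\vvv\vv\inv$ represent the same element of~$\GG$; hence $\sw$ represents~$1$ in~$\GG$ iff $\vv=\vvv$ in~$\GG$, iff $\vv=\vvv$ in~$\MM$ via the embedding, iff $\vv\inv\vvv\revR\ew$, the last step being the standard completeness theorem for reversing, which says that under the cube condition and the length hypothesis two positive words are $\RR$-equivalent precisely when their reversal trivializes. The main obstacle I foresee is the injectivity part of the $\MM\hookrightarrow\GG$ step, which really requires the full completeness package rather than just the bare Ore property; everything else is either bookkeeping or a direct application of an earlier theorem in the paper.
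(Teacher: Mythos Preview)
Your proposal is correct and follows essentially the same route as the paper, which derives Theorem~\ref{T:Intro3} by combining Propositions~\ref{P:Cube}, \ref{P:Cancellation}, and~\ref{P:Terminate} with Corollary~\ref{C:WordPb}: completeness from homogeneity plus the cube conditions, cancellativity from completeness plus condition~(i), termination and common right-multiples from~(v), then Ore's theorem and the chain of equivalences $\sw\eqR\ew \Leftrightarrow \vv\eqR\vvv \Leftrightarrow \vv\eqpR\vvv \Leftrightarrow \vv\inv\vvv\revR\ew$.

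One correction, however: the step you flag as ``the main obstacle''---identifying~$\GG$ with the group of fractions of~$\MM$, hence the injectivity of $\MM\hookrightarrow\GG$---does \emph{not} require completeness. It is part of the standard Ore package (see~\cite{ClP}) that when a cancellative monoid~$\MM$ satisfies the Ore condition, its group of right-fractions admits the same presentation~$(\SSS,\RR)$ as a group, so that group \emph{is}~$\GG$ and $\MM$ embeds in it. Completeness enters only to obtain cancellativity (Proposition~\ref{P:Cancellation}) and for the final equivalence $\vv\eqpR\vvv\Leftrightarrow\vv\inv\vvv\revR\ew$. Your appeal to Theorem~\ref{T:Intro2} is also harmless but superfluous: Ore needs only common right-multiples, not least ones, and those you already extract directly from~(v).
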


The above statements\footnote{actually, these are rather templates, as several variants exist; in particular, it is not necessary that the same presentation is used to establish the various hypotheses, see Remark~\ref{R:Various}} look quite technical, and one may wonder whether any presentation satisfies the many involved requirements. Actually, such presentations do exist, and there is even a number of them. Indeed, every Artin--Tits presentation is eligible and, more generally, every Garside group admits presentations that satisfy the above conditions. On the other hand, it is of course easy to construct examples that do not satisfy the conditions, and we do not claim that subword reversing is of universal interest. What we do claim is that, when one is to address an unknown semigroup presentation, it is always worth trying reversing. Let us mention that, in some cases such as the above-mentioned Artin--Tits presentations, reversing (or essentially equivalent methods) is the only method known so far for establishing cancellativity.

Further applications of subword reversing will be mentioned. As a general rule, the method is well fitted to work with the so-called Garside monoids and groups. In particular, it is eligible to compute least common multiples, greatest common divisors, and the derived unique normal forms (``greedy normal forms'').

\subsection*{Historical comments}

Subword reversing is, in some sense, the most obvious and elementary  approach for effectively constructing van Kampen diagrams (see Section~\ref{S:Description} below), and it could have been introduced in the early years of the twentieth century. However it seems it was not considered until much later.

A precursor of subword reversing can be found in Garside's approach to Artin's braid groups~\cite{Gar} and in the subsequent extension to spherical Artin--Tits groups by Brieskorn and Saito~\cite{BrS}: in particular, Theorem~H of~\cite{GarT} and~\cite{Gar} amounts to saying that Artin's presentation is complete with respect to subword reversing\footnote{it may be interesting to mention that, in~Ê\cite{GarT}, the principle of the proof of Theorem~H is attributed by F.A.\,Garside to his advisor G.\,Higman}. However, the viewpoint is slightly different from what will be developed below, and reversing remains implicit in these sources.

It seems that subword reversing in its current form was first explicitly considered in~\cite{Dez, Dfb} with the specific aim of investigating the so-called geometry monoid of self-distributivity and establishing cancellativity results. Soon after, the  eligibility of Artin's braid monoids---which turn out to be projections of the self-distributivity monoid---was observed~\cite{Dfa, Dff}, and the connection with Garside's approach became clear. At the same time, again in the case of braid monoids and Artin--Tits monoids, the approach of Tatsuoka in~\cite{Tat}, and, slightly later, that of Corran in~\cite{Cor}, are closely connected. All these approaches are essentially equivalent and equally relevant in the case of presentations that define monoids in which least common multiples exist (``complete complemented presentations'' according to the terminology of Section~\ref{S:Complete} below, ``chainable presentations'' according to the terminology of~\cite{Cor}). However, it seems that only subword reversing is suitable for an extension to more general cases~\cite{Dgp}.

As already illustrated in Figure~\ref{F:Comparison}, there seems to be no connection between subword reversing and the other general algorithmic methods relevant for (semi)groups, because the latter rely on a totally different approach for solving the word problem. If subword reversing is to be compared with another existing method, it is Dehn's algorithm and small cancellation techniques that seem the closest: all have in common that a van Kampen diagram is built by using a convenient fragment of the boundary at each step. However, in the case of subword reversing, the boundary is defined dynamically, resulting in a quadratic complexity rather than in a linear complexity.

At another level, the completion procedure involved as a preprocessing step in the subword reversing method turns out to have very little in common with the one involved in the Gr\"obner base approach as adapted to the context of presented semigroups~Ê\cite{Aut}.

\subsection*{Organization of this text}

In Section~\ref{S:Description}, we describe subword reversing as a particular strategy for constructing van Kampen diagrams. In Section~\ref{S:Range}, we analyze the range of the method, \ie, we state the additional conditions under which reversing is possibly useful, namely those guaranteeing the so-called completeness property. Then, in Section~\ref{S:Uses}, we list some results that can be obtained---in good cases---using subword reversing, including a cancellativity criterion that is maybe the most striking application of the method. Finally, in Section~\ref{S:Efficiency}, we address the question of whether subword reversing, when eligible, leads to efficient algorithms, in particular in terms of solution of the word problem and of isoperimetric inequalities. 

The main new results proved in this text are those of Section~\ref{S:Mixed} (Propositions~\ref{P:Embeddability} and~\ref{P:MixedBis}) about mixed reversing and Section~\ref{S:Lower} (Proposition~\ref{P:Optimal}) about the optimality of reversing and its applications to the combinatorial distance between braid words.

\subsection*{Acknowledgment}

The author thanks J\'er\'emy Chamboredon for his help in preparing the final version of this text.

\section{Subword reversing: description}
\label{S:Description}

Subword reversing can equivalently be described as a syntactic transformation on words, or as a strategy for constructing van Kampen diagrams in the context of presented semigroups or monoids. Here we give both descriptions, starting with the latter, which is more visual and concrete.

\subsection{Van Kampen diagrams}

Hereafter we always work with monoids rather than with arbitrary semigroups, \ie, we always assume that our semigroups contain a unit element, usually denoted~$1$. This option is convenient, but unessential.

Assume that $(\SSS, \RR)$ is a semigroup presentation, \ie, $\SSS$ is a (finite or infinite) nonempty set and $\RR$ is a (finite or infinite) family of pairs of nonempty words in the alphabet~$\SSS$, usually called \emph{relations}. We denote by $\Mon\SSS\RR$ the monoid presented by $(\SSS, \RR)$, \ie, the quotient-monoid $\SSS^*{/}{\eqp_\RR}$ where $\SSS^*$ denotes the free monoid of all words in the alphabet~$\SSS$ and $\eqp_\RR$ denotes the least congruence on~$\SSS^*$ (multiplication-compatible equivalence relation) that includes~$\RR$. As is well-known, two words~$\ww, \www$ of~$\SSS^*$ are $\RR$-equivalent, \ie, connected under~$\eqpR$, if and only if there exists an $\RR$-derivation from~$\ww$ to~$\www$, defined to be a finite sequence of words~$(\ww_0, \, ... \, , \ww_\pp)$ such that $\ww_0$ is~$\ww$, $\ww_\pp$ is~$\www$, and, for each~$\ii$, there exists $\{\vv, \vvv\}$ in~$\RR$ and $\uu, \uuu$ in~$\SSS^*$ satisfing $\{\ww_\ii, \ww_{\ii+1}\} = \{\uu \vv \uuu, \uu \vvv \uuu\}$, \ie, $\ww_{\ii+1}$ is obtained from~$\ww_\ii$ by substituting some subword that occurs in a relation of~$\RR$ with the other element of that relation. 

In the above context, by construction, for each relation $\{\vv, \vvv\}$ of~$\RR$, the elements of the monoid~$\Mon\SSS\RR$ represented by~$\vv$ and by~$\vvv$ are equal. Owing to this fact, it is customary to denote the relation $\{\vv, \vvv\}$ as $\vv = \vvv$.

An $\RR$-derivation can be nicely visualized using a van Kampen diagram.  A \emph{$(\SSS,\RR)$-van Kampen diagram} for a pair of words~$(\ww, \www)$ is a planar oriented graph with a unique source vertex and a unique sink vertex and  edges labeled by letters of~$\SSS$, so that the labels of each face correspond to a relation of~$\RR$ and the labels of the bounding paths form the words~$\ww$ and~$\www$, respectively. 

\begin{lemm}[folklore]
\label{L:Derivation}
If $(\SSS, \RR)$ is a semigroup presentation, then two words~$\ww$ and~$\www$ of~$\SSS^*$ are $\RR$-equivalent if and only if there exists an $(\SSS, \RR)$-van Kampen diagram for~$(\ww, \www)$.
\end{lemm}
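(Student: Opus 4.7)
The plan is to prove the two directions separately by induction, matching the length of an $\RR$-derivation with the number of 2-cells of the corresponding van Kampen diagram.

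For the ``only if'' direction, I would fix an $\RR$-derivation $(\ww_0, \ww_1, \ldots, \ww_\pp)$ from~$\ww$ to~$\www$ and induct on~$\pp$. The base case $\pp = 0$ gives $\ww = \www$, and the single edge-path labeled~$\ww$ serves as a degenerate van Kampen diagram whose upper and lower boundaries coincide. For the inductive step, starting from a diagram~$\Delta$ for~$(\ww, \ww_{\pp-1})$ whose lower boundary reads $\ww_{\pp-1} = \uu\vv\uuu$, I glue along the subpath labeled~$\vv$ a new 2-cell whose other bounding arc is labeled~$\vvv$, where $\{\vv, \vvv\}$ is the relation of~$\RR$ witnessing the derivation step $\ww_{\pp-1} \to \ww_\pp$. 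The resulting diagram has upper boundary~$\ww$ and lower boundary $\uu\vvv\uuu = \www$, as required.

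For the ``if'' direction, given an $(\SSS, \RR)$-van Kampen diagram~$\Delta$ for~$(\ww, \www)$, I would induct on the number~$\pp$ of 2-cells of~$\Delta$. The base case $\pp = 0$ forces the two boundary paths to coincide, whence $\ww = \www$ and $\ww \eqpR \www$ trivially. For the inductive step, planarity guarantees the existence of a 2-cell~$F$ accessible from the lower boundary: following that boundary from the source, one eventually meets a face, and the first such meeting selects~$F$ and a nontrivial common subpath labeled~$\vvv$, giving a factorization $\www = \uu\vvv\uuu$. Let $\vv$ be the label of the complementary arc of~$F$; by definition of a van Kampen diagram, $\{\vv, \vvv\}$ belongs to~$\RR$. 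Deleting~$F$ from~$\Delta$ produces a van Kampen diagram for~$(\ww, \uu\vv\uuu)$ with $\pp-1$ cells. The induction hypothesis gives $\ww \eqpR \uu\vv\uuu$, and appending the one-step derivation $\uu\vv\uuu \to \uu\vvv\uuu = \www$ yields $\ww \eqpR \www$.

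The main obstacle lies in the reverse direction, specifically in making precise the planarity argument that extracts an accessible face whose deletion yields another genuine van Kampen diagram. Degenerate configurations such as cut vertices along the outer contour, edges shared by the two bounding paths, or 2-cells whose entire boundary lies on the outer contour must be treated with care, but they are handled either by a direct combinatorial analysis or, more cleanly, by fixing standard cellular conventions that rule such pathologies out \emph{a priori}. Modulo this combinatorial bookkeeping, both inductions are routine.
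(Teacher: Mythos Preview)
Your proposal is correct and follows essentially the same approach as the paper's (sketched) proof: build the diagram by stacking the successive words of a derivation and gluing one cell per step, and conversely peel off faces one at a time to recover a derivation. Your version is simply more explicit about the inductions and the planarity issue in the reverse direction, which the paper leaves implicit.
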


\begin{proof}[Proof (sketch)]
If $(\ww_0, ..., \ww_\pp)$ is an $\RR$-derivation from~$\ww$ to~$\www$, then drawing paths labeled with the successive words~$\ww_\ii$ one below the other and identifying the unchanged letters yields a van Kampen diagram for~$(\ww, \www)$. Conversely, if $\KKK$ is a van Kampen diagram for~$(\ww, \www)$, one obtains an $\RR$-derivation from~$\ww$ to~$\www$ by enumerating the labels in a sequence of paths from the source of~$\KKK$ to its sink that differ by one face at a time.
\end{proof}

\begin{exam}
\label{X:Preferred}
In the sequel we shall often consider the presented monoid
$$\MM = \Mon{\tta, \ttb, \ttc, \ttd}
{\tta\ttb = \ttb\ttc = \ttc\tta,
\ttb\tta = \ttd\ttb = \tta\ttd}.$$
Then $\tta\ttc\tta\tta\tta$ and $\ttc\ttd\ttb\ttb\ttb$ represent the same element of~$\MM$ as we have
$$\tta\ttc\tta\tta\tta \eqp{} \tta\ttb\ttc\tta\tta
\eqp{} \tta\ttb\ttb\ttc\tta
\eqp{} \ttc\tta\ttb\ttc\tta
\eqp{} \ttc\tta\ttb\tta\ttb
\eqp{} \ttc\tta\ttd\ttb\ttb
\eqp{} \ttc\ttd\ttb\ttb\ttb.$$
A van Kampen diagram corresponding to this derivation is displayed in Figure~\ref{F:Kampen}.
\end{exam}

\begin{figure}[htb]
$$\begin{picture}(58,28)(0,3)
\put(0,0.8){\includegraphics{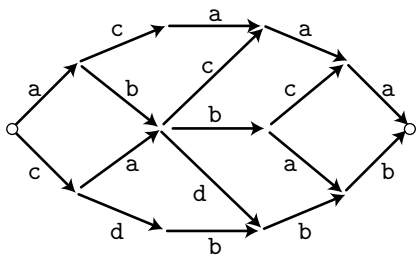}}
\put(3,22){$\tta$}
\put(3,11){$\ttc$}
\put(14,30){$\ttc$}
\put(14,3){$\ttd$}
\put(16,22){$\ttb$}
\put(16,12){$\tta$}
\put(26,25){$\ttc$}
\put(25,8){$\ttd$}
\put(27,32){$\tta$}
\put(27,18.5){$\ttb$}
\put(27,1){$\ttb$}
\put(37,22){$\ttc$}
\put(37,12){$\tta$}
\put(39,30){$\tta$}
\put(39,3){$\ttb$}
\put(50,22){$\tta$}
\put(50,11){$\ttb$}
\end{picture}$$
\caption{\sf A van Kampen diagram for the derivation of Example~\ref{X:Preferred}: the labels of the top path form the word~$\tta\ttc\tta\tta\tta$, those of the bottom path form $\ttc\ttd\ttb\ttb\ttb$, and the diagram is tessellated 
by tiles that correspond to relations.}
\label{F:Kampen}
\end{figure}

\subsection{A strategy for building van Kampen diagrams}

Assuming that $(\SSS, \RR)$ is a semigroup presentation, we address the question of effectively building a van Kampen diagram for a pair of words~$(\ww, \www)$. Of course, such a diagram may exist only if $\ww$ and $\www$ are $\RR$-equivalent, so an algorithmic solution to the current question has to include a solution for the word problem of~$(\SSS, \RR)$, \ie, a method for deciding whether $\ww$ and $\www$ are $\RR$-equivalent. Pictorially, our problem consists in drawing from a common origin two paths labeled~$\ww$ and~$\www$ and tessellating the space between these paths with tiles corresponding to the relations of~$\RR$.

In this context, subword reversing is the most straightforward strategy, namely starting from the two edges~$\ss, \sss$ that originate in the source vertex, choosing one relation $\ss ... = \sss ...$ in~$\RR$ and iterating the process with the next vertices. So, if the paths $\ww$ and $\www$ have an overall orientation from left to right (as in Figure~\ref{F:Kampen}), subword reversing can be called the ``left strategy'' as it corresponds to proceeding from left to right, namely

- looking at a (leftmost) pending pattern 
\VR(7,3)\begin{picture}(8,4)(0,3)
\put(0,0.5){\includegraphics[scale=0.7]{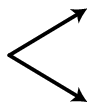}}
\put(2,0){$\ss$}
\put(2,6){$\sss$}
\end{picture},

- choosing a relation $\ss\vvv = \sss\vv$ of~$\RR$, closing this pattern into
\VR(5,5)\begin{picture}(12,4)(0,3)
\put(0,0.5){\includegraphics[scale=0.7]{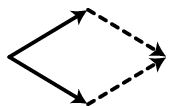}}
\put(2,0){$\ss$}
\put(2,6){$\sss$}
\put(9,0){$\vvv$}
\put(9,6){$\vv$}
\end{picture},
and repeat.\\
As it stands, the approach seems naive, and, clearly, it cannot be successful in every case. Several obstructions may occur. In particular, one gets stuck if, at some step, there is no eligible relation $\ss ... = \sss ... $ in~$\RR$. Also, the process may never terminate, or it may terminate but  boundary words be longer than $\ww, \www$, \ie, in order to close the diagram one has to extend the initial words---which is the best we can hope for if the initial words $\ww, \www$ are not $\RR$-equivalent. Also, we observe that the strategy need not be deterministic: if there exist letters~$\ss, \sss$ such that $\RR$ contains several relations $\ss ... = \sss ... $, each of these is eligible and there are several ways of performing the process.

\begin{exam}
(See Figure~\ref{F:RevStrat}.) With the presentation and the words of Example~\ref{X:Preferred}, starting from two diverging paths labeled $\tta\ttc\tta\tta\tta$ and $\ttc\ttd\ttb\ttb\ttb$, we first close the left open $(\tta, \ttc)$-pattern using the relation $\tta\ttb = \ttc\tta$. Then we have two open patterns, namely $(\tta, \ttd)$ (bottom) and $(\ttc, \ttb)$ (top). If we choose the former, we can close it using the relation $\tta\ttd = \ttd\ttb$. In this way, we find an open pattern consisting of two diverging $\ttb$-labeled edges: we can see it as a special open pattern, which can be closed using the trivial relation $\ttb = \ttb$, \ie, adding empty words---represented by dotted lines on the picture. Continuing similarly, we arrive after five steps at a diagram in which the only open pattern is $(\ttc, \ttd)$. Here we are stuck, because there is no relation $\ttc... = \ttd...$ in our list of relations. So, in this case, the reversing strategy fails: we know that there exists a van Kampen diagram (for instance, the one of Figure~\ref{F:Kampen}), but we fail to find this one or any other one using our attempted strategy. When we compare with Figure~\ref{F:Kampen}, we see that, in order to proceed and re-obtain the previous van Kampen diagram, we ought to split the open pattern into two open patterns by inserting a new, intermediate $\ttb$-labeled edge, which is precisely what our strategy tries to avoid.
\end{exam}

\begin{figure}[htb]
\begin{picture}(118,70)(0,-1)
\put(0,-0.5){\includegraphics{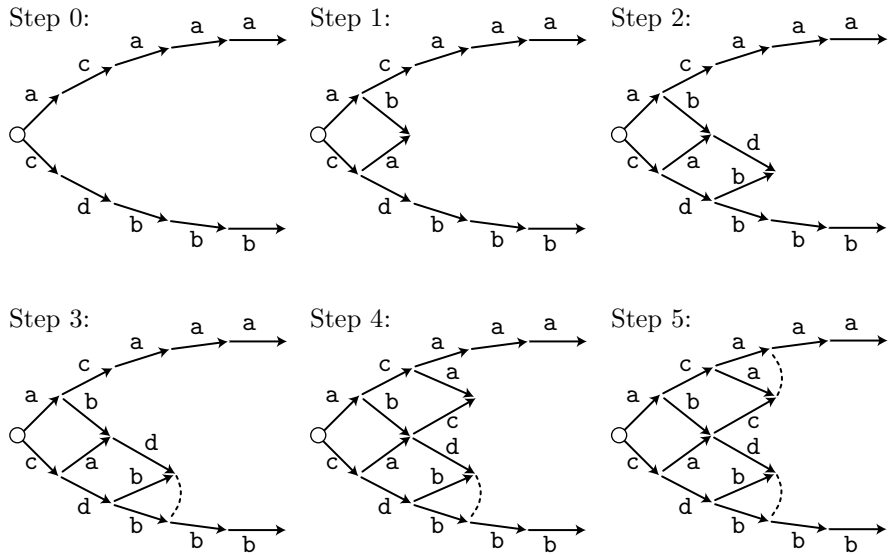}}
\put(0,68){Step 0:}
\put(2,58){$\tta$}
\put(9,62){$\ttc$}
\put(16,65){$\tta$}
\put(24,67){$\tta$}
\put(31,67.5){$\tta$}
\put(2,49){$\ttc$}
\put(9,43){$\ttd$}
\put(16,40.5){$\ttb$}
\put(24,38.5){$\ttb$}
\put(31,38){$\ttb$}
\put(40,68){Step 1:}
\put(42,58){$\tta$}
\put(49,62){$\ttc$}
\put(56,65){$\tta$}
\put(64,67){$\tta$}
\put(71,67.5){$\tta$}
\put(42,49){$\ttc$}
\put(49,43){$\ttd$}
\put(56,40.5){$\ttb$}
\put(64,38.5){$\ttb$}
\put(71,38){$\ttb$}
\put(50,57){$\ttb$}
\put(50,49){$\tta$}
\put(80,68){Step 2:}
\put(82,58){$\tta$}
\put(89,62){$\ttc$}
\put(96,65){$\tta$}
\put(104,67){$\tta$}
\put(111,67.5){$\tta$}
\put(82,49){$\ttc$}
\put(89,43){$\ttd$}
\put(96,40.5){$\ttb$}
\put(104,38.5){$\ttb$}
\put(111,38){$\ttb$}
\put(90,57){$\ttb$}
\put(90,49){$\tta$}
\put(98,51.5){$\ttd$}
\put(96,47){$\ttb$}
\put(0,28){Step 3:}
\put(2,18){$\tta$}
\put(9,22){$\ttc$}
\put(16,25){$\tta$}
\put(24,27){$\tta$}
\put(31,27.5){$\tta$}
\put(2,9){$\ttc$}
\put(9,3){$\ttd$}
\put(16,0.5){$\ttb$}
\put(24,-1.5){$\ttb$}
\put(31,-2){$\ttb$}
\put(10,17){$\ttb$}
\put(10,9){$\tta$}
\put(18,11.5){$\ttd$}
\put(16,7){$\ttb$}
\put(40,28){Step 4:}
\put(42,18){$\tta$}
\put(49,22){$\ttc$}
\put(56,25){$\tta$}
\put(64,27){$\tta$}
\put(71,27.5){$\tta$}
\put(42,9){$\ttc$}
\put(49,3){$\ttd$}
\put(56,0.5){$\ttb$}
\put(64,-1.5){$\ttb$}
\put(71,-2){$\ttb$}
\put(50,17){$\ttb$}
\put(50,9){$\tta$}
\put(58,21){$\tta$}
\put(58,14.5){$\ttc$}
\put(58,11){$\ttd$}
\put(56,7){$\ttb$}
\put(80,28){Step 5:}
\put(82,18){$\tta$}
\put(89,22){$\ttc$}
\put(96,25){$\tta$}
\put(104,27){$\tta$}
\put(111,27.5){$\tta$}
\put(82,9){$\ttc$}
\put(89,3){$\ttd$}
\put(96,0.5){$\ttb$}
\put(104,-1.5){$\ttb$}
\put(111,-2){$\ttb$}
\put(90,17){$\ttb$}
\put(90,9){$\tta$}
\put(98,21){$\tta$}
\put(98,14.5){$\ttc$}
\put(98,11){$\ttd$}
\put(96,7){$\ttb$}
\end{picture}
\caption{\sf Trying to build a van Kampen diagram for the words of Example~\ref{X:Preferred} using the reversing strategy; here the strategy fails since one gets stuck at Step~$5$.}
\label{F:RevStrat}
\end{figure}

It will be convenient to standardize the diagrams such as those of Figure~\ref{F:RevStrat} so that they only contain vertical and horizontal edges,  plus dotted arcs connecting vertices that are to be identified in order to (possibly) obtain an actual van Kampen diagram. Such standardized diagrams will be called \emph{reversing diagrams} in the sequel. For instance, the reversing diagram corresponding to the final step in Figure~\ref{F:RevStrat} is displayed in Figure~\ref{F:Reversing}.
\goodbreak

\begin{figure}[htb]
$$\begin{picture}(52,31)(0,4)
\put(1.5,0.5){\includegraphics{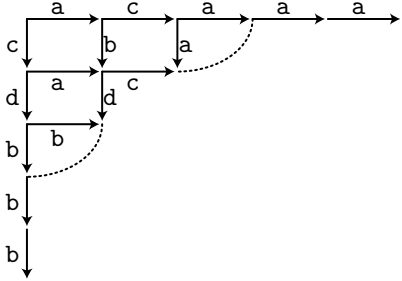}}
\put(0,3){$\ttb$}
\put(0,10){$\ttb$}
\put(0,17){$\ttb$}
\put(0,24){$\ttd$}
\put(0,31){$\ttc$}
\put(6,36){$\tta$}
\put(16,36){$\ttc$}
\put(26,36){$\tta$}
\put(36,36){$\tta$}
\put(46,36){$\tta$}
\put(13,31){$\ttb$}
\put(23,31){$\tta$}
\put(6,26){$\tta$}
\put(16,26){$\ttc$}
\put(13,24){$\ttd$}
\put(6,18.5){$\ttb$}
\end{picture}$$
\caption{\sf Reversing diagram associated with the last diagram in Figure~\ref{F:RevStrat}: the only difference is that we insist that all edges are horizontal oriented to the right or vertical oriented to bottom. Again we are stuck as there is no relation $\ttc ... = \ttd ... $ in the presentation.}
\label{F:Reversing}
\end{figure}

In this way, all tiles in a reversing diagram are obtained in a uniform way, namely by closing 
\VR(10,5)\begin{picture}(15,0)(-3,5)
\put(0,0){\includegraphics{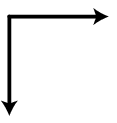}}
\put(-2,5){$\ss$}
\put(5,12){$\sss$}
\end{picture}
into
\VR(10,5)\begin{picture}(18,0)(-3,5)
\put(0,0){\includegraphics{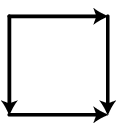}}
\put(-2,5){$\ss$}
\put(5,12){$\sss$}
\put(12,5){$\vv$}
\put(5,-2.5){$\vvv$}
\end{picture}
where $\ss\vvv=\sss\vv$ is a relation of~$\RR$, or, more accurately, in order to take possible dotted lines into account, closing 
\VR(7,5)\begin{picture}(15,0)(-3,5)
\put(0,0){\includegraphics{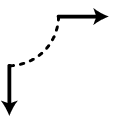}}
\put(-2,3){$\ss$}
\put(7,12){$\sss$}
\end{picture}
into
\VR(6,8)\begin{picture}(18,0)(-3,5)
\put(0,0){\includegraphics{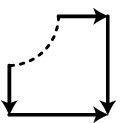}}
\put(-2,3){$\ss$}
\put(7,12){$\sss$}
\put(12,5){$\vv$}
\put(5,-2.5){$\vvv$}
\end{picture},
 including the degenerate case of
\VR(6,5)\begin{picture}(15,0)(-3,5)
\put(0,0){\includegraphics{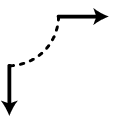}}
\put(-2,3){$\ss$}
\put(7,12){$\ss$}
\end{picture}
being closed into
\VR(6,8)\begin{picture}(15,0)(-3,5)
\put(0,0){\includegraphics{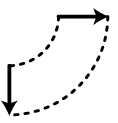}}
\put(-2,3){$\ss$}
\put(7,12){$\ss$}
\end{picture}.
\goodbreak
It should then be clear that the construction may be applied to any pair of initial paths (or, more generally, to any staircase consisting of alternating horizontal and vertical paths), and that the following three behaviours are {\it a priori} possible:

- $(i)$ either one gets stuck with a pair of letters~$(\ss, \sss)$ such that $\RR$ contains no relation $\ss... = \sss...$,

- $(ii)$ or the process continues for ever,

- $(iii)$ or the process leads in finitely many steps to a diagram of the form
\VR(6,8)\begin{picture}(23,10)(-3,5)
\put(0.5,0){\includegraphics{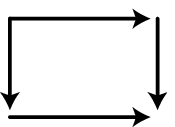}}
\put(-2,5){$\ww$}
\put(7,12){$\www$}
\put(17,5){$\vv$}
\put(7,-2){$\vvv$}
\end{picture}.

In case~$(iii)$, up to identifying the vertices that are connected by a dotted line, the reversing diagram projects to a van Kampen diagram witnessing that the words $\ww\vvv$ and $\www \vv$ are $\RR$-equivalent. Therefore, these words represent a common right-multiple of the elements of the monoid~$\Mon\SSS\RR$ represented by~$\ww$ and~$\www$, and the reversing process can be viewed as a method not only for proving the equivalence of two words, but also, more generally, for constructing common right-multiples. The case when the words~$\vv$ and~$\vvv$ are empty, \ie, when the process terminates without introducing additional edges, is the case when the method gives an actual van Kampen diagram for~$(\ww, \www)$.

\subsection{Syntactic description}

The reversing method can easily be described using words in a symmetrized alphabet~$\SSS \cup \SSS\inv$, where $\SSS\inv$ is a formal copy of~$\SSS$ consisting of a copy~$\ss\inv$ for each letter~$\ss$ of~$\SSS$. Words in such an alphabet~$\SSS \cup \SSS\inv$ will be called \emph{signed} words, and they will be denoted using bold characters, like~$\sw, \sv$, ... By contrast, $\ww, \vv$, ... will always refer to words in the alphabet~$\SSS$.

In order to encode the successive steps of a reversing process, we list the labels in the righmost paths that connect the South-West corner to the North-East corner in a reversing diagram. We decide that the contribution of an $\ss$-labeled edge in such a path is the letter~$\ss$ if the edge is crossed according to its orientation, and~$\ss\inv$ in the opposite case. For instance, the encoding of the (unique) SW-to-NE path in the initial diagram containing~$\ww$ (vertical) and $\www$ (horizontal) is $\ww\inv \www$, where $\ww\inv$ is ``$\ww$ read in the wrong direction'', \ie, is the word obtained from~$\ww$ by replacing each letter with its inverse and reversing the order of letters.

With such coding conventions, performing one step of the reversing method, \ie, closing some open pattern 
\VR(10,5)\begin{picture}(15,0)(-3,5)
\put(0,0){\includegraphics{Open3.eps}}
\put(-2,3){$\ss$}
\put(7,12){$\sss$}
\end{picture}
into
\VR(6,6)\begin{picture}(18,0)(-3,5)
\put(0,0){\includegraphics{Close3.eps}}
\put(-2,3){$\ss$}
\put(7,12){$\sss$}
\put(12,5){$\vv$}
\put(5,-2.5){$\vvv$}
\end{picture}
corresponds to replacing a subword~$\ss\inv \sss$ with a word~$\vvv \vv\inv$ such that $\ss\vvv = \sss\vv$ is a relation of~$\RR$. This includes the case of 
\VR(6,6)\begin{picture}(15,0)(-3,5)
\put(0,0){\includegraphics{Close4.eps}}
\put(-2,3){$\ss$}
\put(7,12){$\ss$}
\end{picture}, which corresponds to deleting a subword~$\ss\inv \ss$, \ie, using $\ew$ for the empty word, replacing it with~$\ew$, which is also~$\ew \ew\inv$, hence the same basic step provided  $\ss = \ss$ is considered to implicitly belong to~$\RR$.

\begin{defi}[\bf reversing]
\label{D:Reversing}
For $(\SSS, \RR)$ a semigroup presentation and $\sw, \sww$ signed words in the alphabet~$\SSS \cup \SSS\inv$, we say that $\sw$ \emph{reverses to~$\sww$ (with respect to~$\RR$) in one step}, denoted $\sw \revR^1 \sww$, if there exist a relation $\ss \vvv = \sss \vv$ of~$\RR$ and signed words~$\su, \suu$ satisfying
\begin{equation}
\label{E:Reversing}
\sw = \su \, \ss\inv \sss \, \suu
\qquad\mbox{and}\qquad
\sww= \su \, \vvv \vv\inv \, \suu.
\end{equation}
We say that $\sw$ \emph{reverses} to~$\sww$ in $\kk$~steps, denoted $\sw \revR^\kk \sww$, if there exist words $\sw_0, ..., \sw_\kk$ satisfying $\sw_0 = \sw$, $\sw_\kk = \sww$ and $\sw_\ii \revR^1 \sw_{\ii+1}$ for each~$\ii$. In this case, the sequence $(\sw_0, ..., \sw_\kk)$ is called an \emph{$\RR$-reversing sequence} from~$\sw$ to~$\sww$. We write $\sw \revR \sww$, or simply $\sw \rev \sww$, if $\sw \revR^\kk \sww$ holds for some~$\kk$, \ie, if there exists at least one $\RR$-reversing sequence connecting~$\sw$ to~$\sww$.
\end{defi}

If we call the letters of~$\SSS$ positive, and those of~$\SSS\inv$ negative, then \eqref{E:Reversing} shows that, in terms of the encoding words, reversing amounts to replacing a negative--positive subword with a positive--negative word. This is the origin of the terminology. Of course, except in the case of a commutation relation~$\ss \sss = \sss \ss$, reversing the subword~$\ss\inv \sss$ does not readily means keeping the letters and changing their order.\footnote{by the way, the names ``redressing'' or ``rectifying'' might have been more appropriate}

\begin{exam}
The successive SW-to-NE paths in the diagrams of Figure~\ref{F:RevStrat} correspond to the reversing sequence
\begin{multline*}
\ttB \ttB \ttB \ttD \, \framebox{$\ttC \tta$} \,\ttc \tta \tta \tta
\rev^1 \ttB \ttB \ttB \,\framebox{$\ttD \tta$}\, \ttB \ttc \tta \tta \tta
\rev^1 \ttB \ttB \,\framebox{$\ttB \ttb$}\, \ttD \ttB \ttc \tta \tta \tta\\
\rev^1 \ttB \ttB \ttD \,\framebox{$\ttB \ttc$}\, \tta \tta \tta
\rev^1 \ttB \ttB \ttD \ttc \,\framebox{$\ttA \tta$}\, \tta \tta
\rev^1 \ttB \ttB \,\framebox{$\ttD \ttc$}\, \tta \tta,
\end{multline*}
in which we used $\ttA, \ttB, ...$ for~$\tta\inv, \ttb\inv, ...$ and we framed the length-two subword that is to be reversed at each step.
\end{exam}

The words that are terminal with respect to $\RR$-reversing are those that contain no length-two subword of the form~$\ss\inv \sss$ such that $\RR$ contains a relation $\ss ... = \sss ...$\,. Among such words are all the words of the form  $\vvv \vv\inv$ where $\vv$ and~$\vvv$ are words in the alphabet~$\SSS$, since such words contain no subword~$\ss\inv \sss$ at all.

A reversing diagram starting with~$\ww\inv \www$ and finishing with~$\vvv \vv\inv$, where $\ww, \www, \vv, \vvv$ are positive words, projects to a van Kampen diagram for~$\ww \vvv$ and~$\www \vv$, so the following is straightforward:

\begin{lemm}
\label{L:Equiv}
For $\ww, \www, \vv, \vvv$ in~$\SSS^*$, the relation $\ww\inv \www \revR\vvv \vv\inv$,  \ie, the existence of an $\RR$-reversing diagram
\VR(6,7)\begin{picture}(23,10)(-3,5)
\put(0.5,0){\includegraphics{CommonMultiple.eps}}
\put(-2,5){$\ww$}
\put(7,12){$\www$}
\put(17,5){$\vv$}
\put(7,-2){$\vvv$}
\put(7,5){$\revR$}
\end{picture}, implies $\ww \vvv \eqpR \www \vv$.
\end{lemm}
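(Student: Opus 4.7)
The plan is to promote the given reversing sequence into a planar diagram and then invoke Lemma~\ref{L:Derivation}. I would proceed by induction on the number $k$ of reversing steps, building a planar reversing diagram $D_i$ after each step $i$. The initial diagram $D_0$ is the L-shape formed by the two paths labeled $\ww$ (vertical, going down from the NW corner) and $\www$ (horizontal, going right from the NW corner). For the inductive step, if $\sw_{i-1} \revR^1 \sw_i$ is obtained by applying a relation $\ss\vvv' = \sss\vv'$ to a subword $\ss\inv\sss$, then $D_i$ is obtained from $D_{i-1}$ by glueing a rectangular tile of sides $\ss,\sss,\vvv',\vv'$ at the corresponding open corner; the degenerate case $\ss = \sss$ with $\vvv' = \vv' = \ew$ is recorded by attaching a pair of dotted arcs between the two endpoints. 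The invariant carried through the induction is that the unique SW-to-NE boundary path of $D_i$ encodes precisely $\sw_i$, with letters taken positive or negative according to whether edges are crossed with or against their orientation.

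Once the sequence terminates at $\sw_k = \vvv\vv\inv$, the diagram $D_k$ has a completed outer boundary of rectangular shape: starting from the common NW corner, the top-then-right path reads $\www\vv$ and the left-then-bottom path reads $\ww\vvv$, both arriving at a common SE corner. The interior is entirely tessellated by tiles, each carrying a relation of~$\RR$. After quotienting by the equivalence relation on vertices generated by the dotted arcs, $D_k$ becomes a genuine $(\SSS, \RR)$-van Kampen diagram for the pair $(\ww\vvv, \www\vv)$, and Lemma~\ref{L:Derivation} delivers the desired $\ww\vvv \eqpR \www\vv$.

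The main obstacle I anticipate is not conceptual but notational: one must verify that the planar embedding of $D_i$ is always realizable, \emph{i.e.} that at each step the pattern $\ss\inv\sss$ singled out in~\eqref{E:Reversing} really corresponds to a unique open convex corner of $D_{i-1}$ at which the new tile can be attached without overlap. This is built into the form $\sw = \su\,\ss\inv\sss\,\suu$ prescribed by Definition~\ref{D:Reversing}, since $\su$ and $\suu$ describe the portions of the boundary lying respectively before and after the chosen corner along the SW-to-NE path. Once this bookkeeping is in place, the geometric argument runs without further incident, as already suggested by the paragraph preceding the statement.
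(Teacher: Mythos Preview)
Your argument is correct and matches the paper's approach exactly: the paper states the lemma as ``straightforward'' from the preceding discussion, which is precisely that the reversing diagram built step by step projects (after collapsing dotted arcs) to a van Kampen diagram for $(\ww\vvv, \www\vv)$, whence Lemma~\ref{L:Derivation} applies. You have simply spelled out the bookkeeping that the paper leaves implicit.
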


\begin{rema}
In Definition~\ref{D:Reversing}, we consider length-two subwords~$\ss\inv \sss$ only. One can modify the definition so as to allow longer negative--positive subwords~$\uu\inv \uuu$ where $\uu$ and $\uuu$ are nonempty words of~$\SSS^*$, and declare that $\uu\inv \uuu$ reverses to~$\vvv \vv\inv$ whenever $\uu \vvv = \uuu \vv$ is a relation of~$\RR$. This new notion of reversing is actually equivalent to the previous one. Indeed, if we use $\revR^*$ for the new notion, it is clear that $\revR^*$ includes~$\revR$. Conversely, to see that $\ww \revR^* \www$ implies $\ww \revR \www$, it is sufficient to consider the elementary step $\uu\inv \uuu \revR^* \vvv \vv\inv$. Write $\uu = \ss_1 \, ... \,  \ss_\pp$ and $\uuu = \sss_1 \, ... \,  \sss_\qq$. By hypothesis, $\ss_1 \, ... \,  \ss_\pp \vvv = \sss_1 \, ... \,  \sss_\qq \vv$ is a relation of~$\RR$, and we find
\begin{multline*}
\uu\inv \uuu = \ss_\pp\inv \, ... \,  \ss_1\inv \sss_1 \, ... \,  \sss_\qq \revR \\
\ss_\pp\inv \, ... \,  \ss_2\inv \ss_2 \, ... \,  \ss_\pp \vvv \vv\inv \sss_\qq{}\inv \, ... \,  \sss_2{}\inv \sss_2 \, ... \,  \sss_\qq \revR^{\pp + \qq -2} \vvv \vv\inv,
\end{multline*}
hence $\uu\inv \uuu \revR \vvv \vv\inv$. The only difference between~$\revR$ and~$\revR^*$ lies in the number of reversing steps, but not in the words that can be reached. In terms of diagrams, this means that we can freely gather reversing tiles so as to avoid trivial steps of the form~$\ss\inv \ss \rev \ew$. For instance, in the context of Figure~\ref{F:RevStrat}, at Step~4, instead of reversing $\ttB \ttc \tta$ into $\ttc \ttA \tta$, and then identifying the two $\tta$-edges to obtain~Ê$\ttc$, we can directly consider the negative--positive pattern $\ttB (\ttc \tta)$, and reverse it to~$\ttc$ since $\ttb \cdot \ttc = (\ttc \tta) \cdot \ew$ is a relation of the presentation. This amounts to gathering the two tiles
\VR(9,8)\begin{picture}(25,0)(-3,5.5)
\put(-0.5,0){\includegraphics{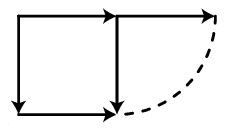}}
\put(4,12){$\ttc$}
\put(14,12){$\tta$}
\put(-2,5.5){$\ttb$}
\put(11,5.5){$\tta$}
\put(4,-1){$\ttc$}
\end{picture}
into the unique tile
\VR(9,8)\begin{picture}(25,0)(-3,5.5)
\put(-0.5,0){\includegraphics{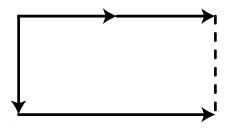}}
\put(4,12){$\ttc$}
\put(14,12){$\tta$}
\put(-2,5.5){$\ttb$}
\put(9,-1){$\ttc$}
\end{picture}.
\end{rema}

\subsection{Left-reversing}
\label{S:LeftReversing}

By construction, subword reversing refers to a preferred direction, namely tessellating van Kampen diagrams starting from the source vertex and proceeding toward the target vertex, \ie, equivalently, reversing negative--positive subwords~$\ss\inv \sss$ into positive--negative words~$\vvv \vv\inv$. A symmetric approach is possible, namely starting from the target vertex of the van Kampen diagram and trying to build a tessellation from right to left. In syntactic terms, this amounts to reversing a positive--negative subword~$\sss \ss\inv$ into a negative--positive word~$\vv\inv \vvv$ such that $\vv\sss = \vvv\ss$ is a relation of the considered presentation. Hereafter, the process considered in the previous sections will be called \emph{right-reversing} (assuming that the source is drawn on the left of the diagram, the process goes to the right; also it provides common right-multiples), whereas the symmetric version where one starts from the target vertex will be called \emph{left-reversing} and denoted~$\revlR$, or simply~$\revl$. Of course, the properties of right- and left-reversings are symmetric, and it is enough to concentrate on one side, except when one combines both procedures as in Sections~\ref{S:DoubleReversing} and~\ref{S:Mixed} below.

\begin{rema}
\label{R:LeftRev}
Left-reversing is \emph{not} the inverse of right-reversing: $\ww \rev \nobreak\www$ does not necessarily imply $\www \revl \ww$. For instance, for each~$\ss$ in the alphabet, $\ss\inv \ss \rev \ew$ holds, but $\ew \revl \ss\inv \ss$ fails: both for right- and for left-reversing, the empty word is reversible to no word other than itself. One could recover symmetry by changing the definition and deciding that $\ss\inv \ss$ right-reverses to~$\ss\inv \ss$, and that $\ss \ss\inv$ left-reverses to~$\ss\inv \ss$. This approach seems definitely poor: by doing so, even in favorable cases like the one of braid monoids---see Example~\ref{X:Braid}---one loses termination. For instance, starting from $\ww = \siginv1 \siginv2 \sig1 \sig2$, right-reversing~$\ww$ in this modified way would lead to the infinite series $(\sig1 \sig2 \sig 1 \sig2 \sig1 \sig2)^\kk \ww (\sig1 \sig2 \sig 1 \sig2 \sig1 \sig2)^{-\kk}$, instead of to~$\sig2 \siginv1$ obtained with the initial process, see Figure~\ref{F:Symmetric}.
\end{rema}

\begin{figure}[htb]
\begin{picture}(80,35)(0,-2)
\put(-0.5,-0.5){\includegraphics{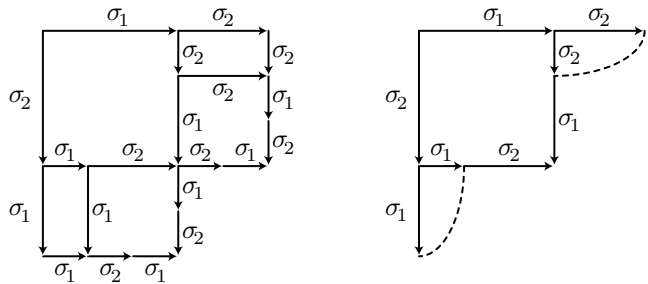}}
\put(-4,6){$\sig1$}
\put(-4,21){$\sig2$}
\put(7,6){$\sig1$}
\put(19,27){$\sig2$}
\put(19,18){$\sig1$}
\put(19,9){$\sig1$}
\put(19,3){$\sig2$}
\put(31,27){$\sig2$}
\put(31,21){$\sig1$}
\put(31,15){$\sig2$}
\put(2,-2){$\sig1$}
\put(8,-2){$\sig2$}
\put(14,-2){$\sig1$}
\put(2,14){$\sig1$}
\put(11,14){$\sig2$}
\put(20,14){$\sig2$}
\put(26,14){$\sig1$}
\put(9,32){$\sig1$}
\put(23,32){$\sig2$}
\put(23,22){$\sig2$}

\put(46,6){$\sig1$}
\put(46,21){$\sig2$}
\put(59,32){$\sig1$}
\put(73,32){$\sig2$}
\put(52,14){$\sig1$}
\put(61,14){$\sig2$}
\put(69,27){$\sig2$}
\put(69,18){$\sig1$}
\end{picture}
\caption{\sf Modifying the definition of right-reversing so as to make it symmetric (left diagram) may lead to a non-terminating process; compare with usual reversing (right diagram).}
\label{F:Symmetric}
\end{figure}

\section{Subword reversing: range}
\label{S:Range}

At this point, we have introduced a tentative strategy for constructing van Kampen diagrams or, equivalently, for sorting the negative and the positive letters of a signed word in a symmetrized alphabet~$\SSS \cup \SSS\inv$. However, we already observed that this strategy need not work in every case, and the interest of our approach is not clear yet. The good news is that there exist nontrivial cases in which the reversing strategy works and, better, there exist practical criteria for identifying such favorable cases and even forcing initially bad presentations to become good at the expense of adding some redundant relations.

\subsection{Complete presentations}
\label{S:Complete}

Let $(\SSS, \RR)$ be a semigroup presentation, and $\ww, \www$ be words in the alphabet~$\SSS$. By construction (or by Lemma~\ref{L:Equiv}), if $\ww\inv \www \revR \ew$ holds, \ie, if everything vanishes when $\RR$-reversing is applied to~$\ww\inv \www$---we recall that $\ew$ denotes the empty word---then we have $\ww \eqpR \www$. So, in such a case, the reversing strategy is successful and provides an $\RR$-derivation from~$\ww$ to~$\www$. The good case is when the previous implication is an equivalence, \ie, when reversing always detects equivalence. 

\begin{defi}[\bf complete]
\label{D:Complete}
 A semigroup presentation $(\SSS, \RR)$ is called \emph{complete (with respect to right-reversing)} if, for all words~$\ww, \www$ in the alphabet~$\SSS$,
\begin{equation}
\label{E:Complete}
\ww \eqpR \www
\mbox{\qquad implies \qquad}
\ww\inv \www \revR \ew. 
\end{equation}
\end{defi}

As recalled above, the converse of~\eqref{E:Complete} is always true, so, if $(\SSS, \RR)$ is complete, \eqref{E:Complete} is actually an equivalence.

\begin{exam}
Our favourite presentation, namely that of Example~\ref{X:Preferred}, is certainly not complete: we have seen that the words~$\tta\ttc\tta\tta\tta$ and $\ttc\ttd\ttb\ttb\ttb$ are equivalent, but that the reversing strategy fails to find a van Kampen diagram for this pair of words: $(\tta\ttc\tta\tta\tta)\inv (\ttc\ttd\ttb\ttb\ttb)$ does not reverse to the empty word. So \eqref{E:Complete} fails for these words.
\end{exam}

\begin{rema}
\label{R:WordProblem}
If we start with a finite presentation~$(\SSS, \RR)$ and every $\RR$-reversing sequence is finite, then completeness implies the solvability of the word problem for~$\Mon\SSS\RR$. Indeed, $\ww \eqpR \www$ is then equivalent to $\ww\inv \www \revR \ew$, and the latter can be decided by exhaustively constructing all $\RR$-reversing sequences from~$\ww\inv \www$ and checking whether the empty word occurs. But, if there exist infinite $\RR$-reversing sequences, we may be unable to decide whether $\ww\inv \www$  reverses to the empty word, and, therefore, to possibly prove that two words~$\ww, \www$ are not $\RR$-equivalent. In this case, even if the presentation is complete, we need not obtain a solution to the associated word problem: completeness and solvability of the word problem are different questions---see Section~\ref{S:WordProblem} below.
\end{rema} 

It is easy to see that complete presentations always exist: for every monoid~$\MM$ with no nontrivial invertible element, the full presentation consisting of a generator~$\underline\xx$ for each element~$\xx$ of~$\MM$ and a list of all relations $\underline\xx \, \underline\yy = \underline\zz$ for $\xx, \yy, \zz$ satisfying $\xx \yy = \zz$ in~$\MM$ is complete. But this result is useless: in most cases, our aim is to investigate a (not yet known) monoid starting from a presentation, in particular to solve the word problem, whereas writing the above full presentation would require a prior solution to that word problem. Moreover, we shall mainly be interested in complete presentations that are as small (finite) as possible, which is rarely the case for a full presentation. 

In this context, the three natural problems are: 

\begin{ques}
\label{Q:What}
$(i)$ How to recognize completeness?

$(ii)$ What to do with a non-complete presentation? 

$(iii)$ What to do with a complete presentation? 
\end{ques}

As can be expected, the answer to Question~\ref{Q:What}$(ii)$ will be: try to make the presentation complete, whereas the answer to Question~\ref{Q:What}$(iii)$ will be: prove properties of the monoid.

Before addressing these questions, we mention an alternative definition of completeness.

\begin{lemm}
\label{L:Complete}
A semigroup presentation~$(\SSS, \RR)$ is complete if and only if, for all~$\uu, \vv, \uuu, \vvv$ in~$\SSS^*$ satisfying $\uu \vvv \eqpR \vv \uuu$, there exist~$\uuuu, \vvvv, \ww$ in~$\SSS^*$ satisfying $\uu\inv \vv \revR \vvvv\uuuu{}\inv$, $\uuu \eqpR\nobreak \uuuu \ww$, and $\vvv \eqpR \vvvv \ww$.
\end{lemm}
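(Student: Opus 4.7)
The plan is to prove the two implications separately; the reverse direction is formal, while the forward direction rests on extracting a sub-diagram inside a van Kampen diagram.

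\emph{Reverse direction.} I would apply the stated condition in the special case $\uuu=\vvv=\ew$. Then $\uu\vvv\eqpR\vv\uuu$ reduces to $\uu\eqpR\vv$, representing an arbitrary $\RR$-equivalent pair of positive words, and the conclusion provides $\vvvv,\uuuu,\ww$ with $\uu\inv\vv\revR\vvvv\uuuu\inv$, $\vvvv\ww\eqpR\ew$ and $\uuuu\ww\eqpR\ew$. Since every relation of $\RR$ has two nonempty sides, $\eqpR$ cannot equate a nonempty word to $\ew$, so $\vvvv=\uuuu=\ww=\ew$, whence $\uu\inv\vv\revR\ew$. This is exactly Definition~\ref{D:Complete}.

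\emph{Forward direction.} Assume $(\SSS,\RR)$ is complete and $\uu\vvv\eqpR\vv\uuu$. Applying completeness to $\uu\vvv$ and $\vv\uuu$ gives $\vvv\inv\uu\inv\vv\uuu\revR\ew$, which realizes a fully tessellated van Kampen diagram $\Delta$: its top path reads $\vv\uuu$ from source NW through a vertex $X$ at the end of $\vv$ to the common sink $\Omega$, and its left path reads $\uu\vvv$ from NW through a vertex $Y$ at the end of $\uu$ to $\Omega$. First I would split $\Delta$ at an interior vertex $Z$ so that the NW sub-rectangle with corners NW, $X$, $Z$, $Y$ is itself a reversing diagram for $\uu\inv\vv$; its east edge from $X$ to $Z$ then reads a positive word $\uuuu$ and its south edge from $Y$ to $Z$ reads a positive word $\vvvv$, giving $\uu\inv\vv\revR\vvvv\uuuu\inv$ and, by Lemma~\ref{L:Equiv}, $\uu\vvvv\eqpR\vv\uuuu$. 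Any path from $Z$ to $\Omega$ through the remainder of $\Delta$ carries a label $\ww$ well-defined up to $\eqpR$; comparing the direct path $X\to\Omega$ along the top with the detour $X\to Z\to\Omega$ gives $\uuu\eqpR\uuuu\ww$, and the symmetric comparison at $Y$ gives $\vvv\eqpR\vvvv\ww$.

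The hard part will be justifying the existence of the interior vertex $Z$. A given reversing sequence for $\vvv\inv\uu\inv\vv\uuu\revR\ew$ may interleave steps on the inner pattern $\uu\inv\vv$ with outer-boundary steps at the junctions with $\vvv\inv$ or $\uuu$. I would rearrange the sequence using a local commutation lemma: two reversing steps acting on disjoint negative--positive patterns commute, and because an inner step acts on two letters both descended from the inner part while an outer-boundary step necessarily involves one letter from $\vvv\inv$ or $\uuu$, the two kinds of steps are always at disjoint positions and can be permuted. Standard rewriting permutation then rearranges the sequence so that all inner reversing is performed first and forms a sub-sequence $\uu\inv\vv\revR\vvvv\uuuu\inv$; this sub-sequence must terminate, since otherwise the whole sequence could not reach $\ew$ in finitely many steps, contradicting completeness. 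Once $Z$ is secured, the production of $\ww$ and the verification of the three equivalences reduce to the standard fact that any two paths between fixed vertices in a van Kampen diagram carry $\eqpR$-equivalent labels.
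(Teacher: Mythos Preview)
Your argument is correct and is in fact more detailed than the paper's, which omits the proof entirely (``the equivalence with Definition~\ref{D:Complete} is easily established''). The splitting of the reversing diagram that you use for the forward direction is exactly the ``standard argument'' the paper later invokes in the proof of Proposition~\ref{P:Lcm} (see Figure~\ref{F:Lcm}), so your approach is the intended one.

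One small point worth making explicit: your commutation argument shows that the inner steps can be permuted to the front, yielding $\uu\inv\vv \revR Y$ for some signed word~$Y$, followed by $\vvv{}\inv Y\,\uuu \revR \ew$ via outer steps; but you still need $Y$ to have the form $\vvvv\,\uuuu{}\inv$. This follows because, in the outer phase, the surviving inner letters always form a contiguous block that shrinks only from its ends: a consumption at the left end is a pattern $s^{-1}y$ with $s^{-1}$ outer, forcing~$y$ to be positive, and symmetrically a right-end consumption forces the letter to be negative. Hence the letters of~$Y$ are positive up to some index and negative thereafter, i.e., $Y = \vvvv\,\uuuu{}\inv$. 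With that observed, the remainder of your argument (extracting~$\ww$ as the label of any path from~$Z$ to~$\Omega$ and reading off the two equivalences) goes through exactly as you describe.
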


Roughly speaking, completeness holds if every common right-multiple relation factors through a reversing. The equivalence with Definition~\ref{D:Complete} is easily established.

\subsection{The cube condition}

As for Question~\ref{Q:What}$(i)$, there exists a satisfactory answer, or, actually, several satisfactory answers covering various cases. The key notion is as follows.

\begin{defi}[\bf cube condition]
\label{D:Cube}
Assume that $(\SSS, \RR)$ is a semigroup presentation, and $\uu, \uuu, \uuuu$ are words in the alphabet~$\SSS$. We say that $(\SSS, \RR)$ satisfies the \emph{cube condition for~$(\uu, \uuu, \uuuu)$} if
\begin{equation}
\label{E:Cube}
\uu\inv \uuuu \uuuu{}\inv \uuu \revR \vvv \vv\inv
\mbox{\qquad implies \qquad}
(\uu \vvv)\inv (\vv \uuu) \revR \ew.
\end{equation}
For~$\XX$ included in~$\SSS^*$, we say that $(\SSS, \RR)$ satisfies the cube condition \emph{on~$\XX$} if it satisfies the cube condition for every triple~$(\uu, \uuu, \uuuu)$ with $\uu, \uuu, \uuuu$ in~$\XX$.
\end{defi}

Pictorially, \eqref{E:Cube} means that
\VR(11,10)\begin{picture}(29,0)(-3,7)
\put(0,0){\includegraphics{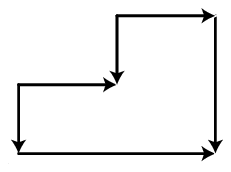}}
\put(13,5){$\rev$}
\put(-2,4){$\uu$}
\put(7,11){$\uuuu$}
\put(3,9){$\uuuu$}
\put(15,16){$\uuu$}
\put(10,-2){$\vvv$}
\put(21.5,7){$\vv$}
\end{picture}
implies
\VR(10,10)\begin{picture}(24,0)(-3,7)
\put(0,0){\includegraphics{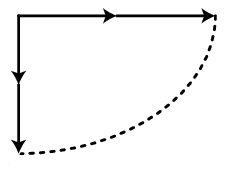}}
\put(8,7){$\rev$}
\put(-2,11){$\uu$}
\put(-2,4){$\vvv$}
\put(4,16){$\uuu$}
\put(14,16){$\vv$}
\end{picture}.

We insist that what \eqref{E:Cube} says is that, for each reversing sequence starting from $\uu\inv \uuuu \uuuu{}\inv \uuu$ \emph{and} terminating in a positive--negative word~$\vvv \vv\inv$, we have $(\uu \vvv)\inv (\vv \uuu) \revR \ew$. So, in particular, if there is no  such sequence---for instance because every sequence is infinite, or because the sequences get stuck by lack of a relation---then the cube condition is vacuously true.

The cube condition is called so because it expresses that, if we start with three edges labeled~$\uu, \uuu, \uuuu$ in a three-dimensional space and construct three reversing diagrams, respectively from~$(\uu, \uuuu)$, $(\uuuu, \uu)$, and from the two edges extending~$\uuuu$---thus making three faces of a cube---then there is a way to complete that cube with reversing diagrams as shown in Figure~\ref{F:Cube}.

\begin{figure}[htb]
$$\begin{picture}(40,40)(0,2)
\put(0,0){\includegraphics{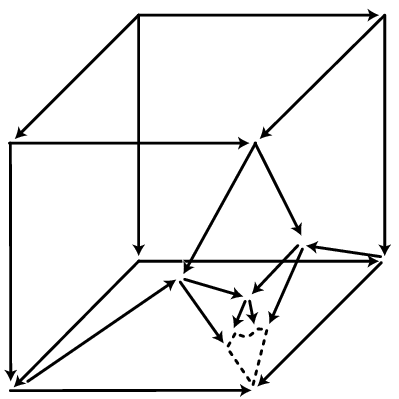}}
\put(6,34){$\uu$}
\put(15,31){$\uuuu$}
\put(26,40){$\uuu$}
\end{picture}$$
\caption{\sf The cube condition: whichever the way of drawing three faces of a cube using reversing diagrams, one can complete the cube using reversing diagrams as shown.}
\label{F:Cube}
\end{figure}

The following is easy.

\begin{prop}
\label{P:Cube}
A semigroup presentation $(\SSS, \RR)$ is complete with respect to right-reversing if and only if it satisfies the cube condition on~$\SSS^*$.
\end{prop}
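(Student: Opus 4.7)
I would prove the two implications separately. For \emph{cube condition on~$\SSS^*$ implies completeness}, the plan is induction on the length~$n$ of an $\RR$-derivation $\ww_0 = \ww, \ww_1, \ldots, \ww_n = \www$ witnessing $\ww \eqpR \www$. The case $n = 0$ is immediate from $\ww\inv\ww \revR \ew$ via iterated $\ss\inv\ss \revR \ew$ cancellations. For $n = 1$, writing $\ww = \uu\vv\uuu$ and $\www = \uu\vvv\uuu$ with $\vv = \vvv$ a relation of~$\RR$, the signed word $\ww\inv\www = \uuu\inv\vv\inv\uu\inv\uu\vvv\uuu$ reverses to $\ew$ by cancelling $\uu\inv\uu$, collapsing $\vv\inv\vvv$ via the relation (legitimately a single big-chunk step by the remark following Definition~\ref{D:Reversing}), and cancelling $\uuu\inv\uuu$. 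For the inductive step, the inductive hypothesis supplies $\ww\inv\ww_{n-1} \revR \ew$ and the $n = 1$ case supplies $\ww_{n-1}\inv\www \revR \ew$; performing them in turn on the disjoint portions of the signed word $\ww\inv\ww_{n-1}\ww_{n-1}\inv\www$ gives $\ww\inv\ww_{n-1}\ww_{n-1}\inv\www \revR \ew$. The cube condition applied to the triple $(\ww, \ww_{n-1}, \www)$ with terminal pair $(\vvv, \vv) = (\ew, \ew)$ then yields $\ww\inv\www \revR \ew$, closing the induction.

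For the converse \emph{completeness implies cube condition on~$\SSS^*$}, suppose $\uu\inv\uuuu\uuuu\inv\uuu \revR \vvv\vv\inv$. The plan is to extract the underlying monoid equivalence from the reversing diagram and then invoke completeness to upgrade it to the reversing to~$\ew$ asserted in the conclusion of the cube condition. To obtain the equivalence, I would decompose the given reversing into three successive stages matching its natural van-Kampen structure: first reduce the left pattern $\uu\inv\uuuu \revR \aa\bb\inv$, then the right pattern $\uuuu\inv\uuu \revR \cc\dd\inv$, and finally the emerging middle pattern $\bb\inv\cc \revR \ee\ff\inv$, so that the terminal word is $\aa\ee\ff\inv\dd\inv$, i.e., $\vvv = \aa\ee$ and $\vv = \dd\ff$. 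Applying Lemma~\ref{L:Equiv} to each of the three stages yields $\uu\aa \eqpR \uuuu\bb$, $\uuuu\cc \eqpR \uuu\dd$, and $\bb\ee \eqpR \cc\ff$, and chaining these delivers $\uu\vvv = \uu\aa\ee \eqpR \uuuu\bb\ee \eqpR \uuuu\cc\ff \eqpR \uuu\dd\ff = \uuu\vv$, which is the sought-for equivalence.

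The principal obstacle lives in this second direction: an arbitrary reversing sequence from $\uu\inv\uuuu\uuuu\inv\uuu$ to $\vvv\vv\inv$ need not proceed in the canonical three-stage order used above, so one must justify that the specific terminal pair $(\vvv, \vv)$ can always be produced by such a canonical sequence. This rests on the local commutativity of reversing steps applied at disjoint negative-positive patterns in the signed word, which permits any valid witness of the hypothesis to be rearranged into the three-stage normal form without altering its terminal positive--negative expression.
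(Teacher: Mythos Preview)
Your proposal is correct and follows essentially the same route as the paper. For the direction \emph{cube $\Rightarrow$ complete}, the paper phrases the argument as ``show that the relation $\uu\inv\uuu \revR \ew$ is a congruence on~$\SSS^*$ containing~$\RR$, the only nontrivial point being transitivity, which is exactly what the cube condition provides''; your induction on derivation length is the same argument unrolled, with your inductive step being precisely the transitivity check. For the direction \emph{complete $\Rightarrow$ cube}, the paper simply asserts that Lemma~\ref{L:Equiv} yields $\uu\vvv \eqpR \uuu\vv$; your three-stage decomposition (and the rearrangement of reversing steps at disjoint patterns) is a correct way to make this explicit, and indeed corresponds to the natural $2\times2$ block structure of the reversing grid built from the staircase $\uu\inv\uuuu\,\uuuu{}\inv\uuu$.

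One small slip: in your inductive step you want the cube condition for the triple $(\ww,\www,\ww_{n-1})$, not $(\ww,\ww_{n-1},\www)$. With the ordering of Definition~\ref{D:Cube}, the hypothesis $\ww\inv\ww_{n-1}\,\ww_{n-1}\inv\www \revR \ew$ matches $\uu=\ww$, $\uuuu=\ww_{n-1}$, $\uuu=\www$, and the conclusion $(\uu\cdot\ew)\inv(\uuu\cdot\ew)\revR\ew$ is then $\ww\inv\www\revR\ew$ as you want.
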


\begin{proof}[Proof (sketch)]
Assume $\uu\inv \uuuu \uuuu{}\inv \uuu \revR \vvv \vv\inv$. Using Lemma~\ref{L:Equiv}, one easily sees that the words~$\uu \vvv$ and~$\vv \uuu$ are $\RR$-equivalent. So, if $(\SSS, \RR)$ is complete, we must have $(\uu \vvv)\inv (\uuu \vv) \revR \ew$, and the cube condition is satisfied for~$(\uu, \uuu, \uuuu)$.

Conversely, consider the binary relation $\uu\inv \uuu \revR \ew$ on~$\SSS^*$. This relation contains all relations of~$\RR$ and, by Lemma~\ref{L:Equiv}, it is included in~$\eqpR$. Now, by definition, $\eqpR$ is the smallest congruence that contains all relations of~$\RR$. So, in order to prove that $\uu\inv \uuu \revR \ew$ coincides with~$\eqpR$, it suffices to prove that $\uu\inv \uuu \revR \ew$ is itself a congruence. All required properties are easy, except transitivity. Now assume  $\uu\inv \uuuu \revR \ew$ and $\uuuu{}\inv \uuu \revR \nobreak \ew$. Then we have  $\uu\inv \uuuu \uuuu{}\inv \uuu \revR \ew$, so, if \eqref{E:Cube} holds for the triple $(\uu, \uuu, \uuuu)$, we deduce $\uu\inv \uuu \revR \ew$, the desired transitivity.
\end{proof}

What was introduced in Definition~\ref{D:Cube} is the \emph{right} cube condition, which is relevant for right-reversing. Of course, a symmetric \emph{left cube condition} is relevant for left-reversing.

\subsection{Homogeneous presentations}

As it stands, the completeness criterion of Proposition~\ref{P:Cube} is useless, as checking the cube condition for all triples of words is not feasible. Fortunately a much more tractable criterion is available whenever a mild additional hypothesis is satisfied, namely a noetherianity condition that prevents a given word to be equivalent to words of unbounded lengths.

\begin{defi}[\bf homogeneous]
\label{D:Homogeneous}
A semigroup presentation $(\SSS, \RR)$ is said to be \emph{(left)-homogeneous} if there exists an $\eqpR$-invariant mapping $\lb$ of~$\SSS^*$ to ordinals
satisfying, for every letter~$\ss$ in~$\SSS$ and every word~$\ww$ in~$\SSS^*$, \begin{equation}
\label{E:Homogeneous}
\lb(\ss \ww) > \lb(\ww).
\end{equation}
\end{defi}

The mapping~$\lb$ should be seen as a (weak) length function on the monoid~$\Mon\SSS\RR$. In usual cases, it can be assumed to take values in natural numbers, which are particular ordinals. A typical case is when all relations in~$\RR$ preserve the length of words, \ie, they have the form $\vvv = \vv$ where $\vvv$ and $\vv$ have the same length: then the length function is $\eqpR$-invariant and \eqref{E:Homogeneous} is trivially satisfied. Saying that a presentation~$(\SSS, \RR)$ is homogeneous (with a witness-function with values in~$\NNNN$) is equivalent to saying that the monoid~$\Mon\SSS\RR$ satisfies the condition~$(ii)$ of Theorems~1, 2, and~3 of the introduction.

The main result is as follows.

\begin{prop} \cite{Dgp}
\label{P:Cube}
Assume that $(\SSS, \RR)$ is a homogeneous semigroup presentation.  Then $(\SSS, \RR)$ is complete if and only if it satisfies the cube condition on~$\SSS$.
\end{prop}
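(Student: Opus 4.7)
The ``only if'' direction is immediate from the preceding proposition, since the cube condition on $\SSS^*$ restricts to the cube condition on the subset $\SSS$. For the converse, assume $(\SSS,\RR)$ is homogeneous with witness $\lambda$ and satisfies the cube condition on $\SSS$. By the preceding proposition, it is enough to establish the cube condition for every triple in $(\SSS^*)^3$. I plan to proceed by well-founded induction on the ordinal $\mu(\uu,\uuu,\uuuu):=\lambda(\uu)+\lambda(\uuu)+\lambda(\uuuu)$: homogeneity guarantees that $\lambda$ is $\eqpR$-invariant and that appending a generator to any of the three words strictly increases $\mu$, so $\mu$ is a well-founded measure. The base case---all three words of length one---is precisely the hypothesis of the theorem, and the degenerate cases where one of the words is empty either collapse to smaller instances or are easily verified.

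For the inductive step, suppose, up to symmetry, that $\uuuu=\ss\uuuu'$ with $\ss\in\SSS$ and $\uuuu'\in\SSS^*$ satisfying $\lambda(\uuuu')<\lambda(\uuuu)$. The geometric idea, directly mirroring the three-dimensional picture of Figure~\ref{F:Cube}, is to decompose the cube for $(\uu,\uuu,\uuuu)$ along a cross-section placed just past the $\ss$-edge, yielding two shorter cubes stacked along a common face. The ``outer'' cube has type $(\uu,\uuu,\ss)$, and reversing along it yields, by the induction hypothesis (which applies since $\lambda(\ss)<\lambda(\uuuu)$ makes $\mu$ strictly smaller), a pair of positive words $\uu_1,\uuu_1\in\SSS^*$ arising from reversing $\uu\inv\ss$ and $\ss\inv\uuu$; these become the new boundary edges of the ``inner'' cube of type $(\uu_1,\uuu_1,\uuuu')$. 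Homogeneity provides the $\lambda$-control needed to keep the inner triple's $\mu$-value strictly below the original, so the inductive hypothesis applies there as well. Composing the two subcube reversings then delivers the reversing $(\uu\vvv)\inv(\vv\uuu)\revR\ew$ required by the cube condition for $(\uu,\uuu,\uuuu)$, for any given reversing $\uu\inv\uuuu\uuuu\inv\uuu\revR\vvv\vv\inv$.

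The main technical difficulty is justifying the slicing rigorously: one must verify that every reversing of $\uu\inv\uuuu\uuuu\inv\uuu$ admits, up to harmless reordering of elementary steps, the ``$\ss$-first'' form described above, with uniquely defined positive intermediate words $\uu_1,\uuu_1$. This is a local confluence statement about the reversing rewriting on the negative--positive patterns that straddle the cross-section, and it is where homogeneity plays its crucial second role, bounding the intermediate $\lambda$-values so that the recursion stays within the scope of the induction hypothesis. Once this normalization is in place, the two applications of the inductive cube condition compose cleanly to give the cube condition on $(\SSS^*)^3$, and the preceding proposition then converts it into completeness of~$(\SSS,\RR)$.
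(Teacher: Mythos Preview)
Your plan matches the paper at the level of strategy: the paper gives no argument for this proposition, citing~\cite{Dgp} and noting only that ``the proof is a rather delicate induction involving the function~$\lambda$ provided by the homogeneity assumption.'' So an induction governed by~$\lambda$, lifting the cube condition from~$\SSS$ to~$\SSS^*$, is indeed the intended route.

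The gap in your outline is the choice of induction measure. You propose $\mu(\uu,\uuu,\uuuu)=\lambda(\uu)+\lambda(\uuu)+\lambda(\uuuu)$ and, after slicing $\uuuu=\ss\,\uuuu'$, want to apply the hypothesis to an inner cube $(\uu_1,\uuu_1,\uuuu')$ where $\uu_1,\uuu_1$ are produced by reversing through~$\ss$. But nothing bounds $\lambda(\uu_1)$ or $\lambda(\uuu_1)$ in terms of $\lambda(\uu)$, $\lambda(\uuu)$: reversing can lengthen words (already for braids, $\sig1\cp\sig2=\sig2\sig1$ is longer than~$\sig1$), and left-homogeneity as stated in Definition~\ref{D:Homogeneous} gives only $\lambda(\ss\ww)>\lambda(\ww)$---it does not even guarantee $\lambda(\ss)<\lambda(\ss\,\uuuu')$ for the outer cube, let alone control the complements appearing in the inner one. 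Your sentence ``Homogeneity provides the $\lambda$-control needed to keep the inner triple's $\mu$-value strictly below the original'' is exactly the missing step, and with the measure~$\mu$ as stated it fails in general. The induction in~\cite{Dgp} uses a subtler quantity, essentially the $\lambda$-value of the common right-multiple $\uu\vvv\eqpR\uuu\vv$ produced by the given reversing: this \emph{does} behave well under slicing, because the target vertices of the subcubes left-divide the target of the full cube, and left-homogeneity then forces a strict drop. You have correctly located the crux of the argument, but the well-founded ordering you chose does not support it.
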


The proof is a rather delicate induction involving the function~$\lb$ provided by the homogeneity assumption. The benefit with respect to Proposition~\ref{P:Cube} is clear: with the criterion of Proposition~\ref{P:Cube}, it is enough to check the cube condition for triples of letters. In particular, in the case of a finite presentation, only finitely many triples have to be considered.

\begin{exam}
\label{X:Main3}
The presentation of Example~\ref{X:Preferred} is homogeneous. Indeed, all relations are of the form $\vv = \vvv$ where $\vv$ and $\vvv$ have length two, so the length function can be used as the desired function~$\lb$. It is easy to check that the cube condition is satisfied for most triples of letters. For instance,  consider the triple $(\tta, \ttb, \ttc)$. Then $\ttA \ttb \ttB \ttc$ reverses to $\ttb \ttA$ and to $\ttd \ttb \ttA \ttA$---we recall that, in examples, we use $\ttA$ for $\tta\inv$, etc. So checking the cube condition for~$(\tta, \ttb, \ttc)$ means checking that both $(\tta \cdot \ttb)\inv (\ttc \cdot \tta)$ and $(\tta \cdot \ttd \ttb)\inv (\ttc \cdot \tta\tta)$, \ie, $\ttB \ttA \ttc \tta$ and $\ttB \ttD \ttA \ttc \tta \tta$, reverse to the empty word, which is the case indeed. On the other hand, we know that the presentation is not complete, so the cube condition must fail for some triple of letters. Actually it fails for~$(\ttc, \ttd, \tta)$. Indeed, $\ttC \tta \ttA \ttd$ reverses to~$\tta \tta \ttB \ttB$. Now $\ttA \ttA \ttC \ttd \ttb \ttb$ does not reverse to the empty word: as there is no relation $\ttc ... = \ttd ... $ in the presentation, this word reverses to no word but itself and, therefore, the cube condition fails for $(\ttc, \ttd, \tta)$.
\end{exam}

Many important families of semigroup presentations turn out to be complete with respect to subword reversing. This is the case in particular for all Artin--Tits presentations, which are those presentations in which all relations take the form
\begin{equation}
\label{E:Artin}
\ss \sss \ss \sss ... = \sss \ss \sss \ss ...
\end{equation}
with both sides of the same length. Artin's presentation of the braid groups, which involve such relations with words of length~$2$ and~$3$, are typical examples (see Example~\ref{X:Braid}). It is an interesting exercise to check the cube condition for a triple of letters pairwise
connected by relations of the type~\eqref{E:Artin}.

\subsection{Completion}
\label{S:Completion}

When we start with a semigroup presentation~$(\SSS, \RR)$ that turns out to be complete, then we are in the optimal case and the monoid~$\Mon\SSS\RR$ is directly eligible for the results explained in Section~\ref{S:Uses} below.

However, even if the initial presentation~$(\SSS, \RR)$ is not complete, typically, if the cube condition turns out to fail for some triple of letters, as in the case of Example~\ref{X:Main3}, then using subword reversing is not completely impossible. Indeed, assume that the cube condition fails for some triple $(\ss, \sss, \ssss)$. This means that we found words~$\vv, \vvv$ such that $\ss\inv \ssss \ssss{}\inv \sss$ reverses to~$\vvv \vv\inv$, but $(\ss \vvv)\inv (\sss \vv)$ does not reverse to the empty word. Now, in this case, we have $\ss \vvv \eqpR \sss \vv$, and $\ss\vvv$ and $\sss\vv$ are $\RR$-equivalent words whose equivalence is not detected by reversing. Let $\RRh$ be obtained by adding the relation $\ss \vvv = \sss \vv$ to~$\RR$. As $\ss\vvv \eqpR \sss\vv$ holds, the new relation is redundant, and the monoid~$\Mon\SSS{\RRh}$ coincides with~$\Mon\SSS\RR$. On the other hand, by construction, the word $(\ss\vvv)\inv (\ss\vvv)$ is $\RRh$-reversible to the empty word, as shows the $\RRh$-reversing sequence
$$\vvv{}\inv \ss\inv \sss \vv \rev \vvv{}\inv \vvv \vv\inv \vv \rev \vvv{}\inv \vvv \rev \ew.$$
In this way, we obtained a new presentation~$(\SSS, \RRh)$ of the same monoid, and we can check the cube condition for it. Notice that, as a new relation has been added, new possibilities of reversing have been added and the previously checked cases must be revisited.

In this way we obtain an iterative completion procedure that consists in adding redundant relations to the presentation. Two cases are possible. The good case is when one obtains a complete presentation after finitely many completion steps, in which case subword reversing is useful for investigating the considered monoid. The bad case is when the completion never comes to an end and leads to larger and larger presentations, in which case subword reversing is likely to be of no use. 

\begin{exam}
\label{X:Main4}
Returning to the case of Example~\ref{X:Main3}, we saw that the cube condition fails for $(\ttc, \ttd, \tta)$, as it leads to the equivalence $\ttc \tta \tta \eqp \ttd \ttb \ttb$, which is not detected by reversing. According to the general principle, adding the relation $\ttc \tta \tta = \ttd \ttb \ttb$ provides a new presentation of the same monoid. By construction, the new presentation is homogeneous (with the same witnessing pseudolength function), and we check again the cube condition for all triples of letters. For instance, with respect to the initial presentation, the word $\ttA \ttc \ttC \ttd$ reverses to no word of the form~$\vvv \vv\inv$ with $\vv, \vvv$ positive, so the cube condition for $(\tta, \ttd, \ttc)$ is vacuously true. With the completed presentation, $\ttA \ttc \ttC \ttd$ reverses to $\ttb \tta \ttB \ttB$, and we can check that $\ttA \ttB \ttA \ttd \ttb \ttb$ reverses to the empty word, so the cube condition is satisfied for that triple. It turns out that it is satisfied for all triple of letters and, therefore, the completed presentation
$$(\tta, \ttb, \ttc, \ttd \mid \tta\ttb = \ttb\ttc = \ttc\tta,
\ttb\tta = \ttd\ttb = \tta\ttd, \ttc \tta \tta = \ttd \ttb \ttb)$$
is complete. When we revisit with this extended presentation the equivalent words of Example~\ref{X:Preferred}, then, as expected, we are no longer stuck after five reversing steps and we finally obtain a van Kampen diagram witnessing the equivalence of the initial words, as expected (see Figure~\ref{F:ReversingBis}).
\end{exam}

\begin{figure}[htb]
$$\begin{picture}(52,33)(0,2)
\put(1.5,0){\includegraphics{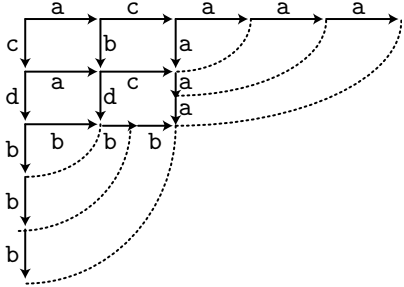}}
\put(0,3){$\ttb$}
\put(0,10){$\ttb$}
\put(0,17){$\ttb$}
\put(0,24){$\ttd$}
\put(0,31){$\ttc$}
\put(6,36){$\tta$}
\put(16,36){$\ttc$}
\put(26,36){$\tta$}
\put(36,36){$\tta$}
\put(46,36){$\tta$}
\put(13,31){$\ttb$}
\put(23,31){$\tta$}
\put(6,26){$\tta$}
\put(16,26){$\ttc$}
\put(13,24){$\ttd$}
\put(23,26){$\tta$}
\put(23,22.5){$\tta$}
\put(6,18){$\ttb$}
\put(13,18){$\ttb$}
\put(19,18){$\ttb$}
\end{picture}$$
\caption{\sf After adding the (redundant) relation $\ttc \tta \tta = \ttd \ttb \ttb$ to the presentation, we can extend the reversing diagram of Figure~\ref{F:Reversing}. We finish with dotted arcs everywhere, thus obtaining a van Kampen diagram witnessing that the initial words are equivalent.}
\label{F:ReversingBis}
\end{figure}

\begin{rema}
\label{R:Grobner}
Several other algorithmic methods consist of investigating a presented monoid by
iteratively adding redundant relations so as to satisfy certain completeness conditions, and it is natural to look for possible connections with the current completion. A typical case is that of the Gr\"obner--Shirshov bases. Apart from some isolated examples, it turns out that the reversing completion and the Gr\"obner--Shirshov completion are unrelated in general (the reversing completion being much smaller in most cases)~\cite{Aut}. As for the Knuth--Bendix completion, it is defined in a framework of oriented rewrite rules, so a comparison does not really make sense. 
\end{rema}

\subsection{The case of complemented presentations}

So far, we made no restriction about the number of relations in the considered presentations. When we add such restrictions, things may become more simple and new completeness criteria appear.

\begin{defi}[\bf complemented]
\label{D:Complemented}
A semigroup presentation $(\SSS, \RR)$ is called \emph{(right)-complemented} if, for each~$\ss$ in~$\SSS$, there is no relation~$\ss ... = \ss ... $ in~$\RR$ and,  for  $\ss, \sss$ distinct in~$\SSS$, there is at most one relation $\ss \, ... = \sss \, ... $ in~$\RR$.
\end{defi}

For instance, the presentation of Example~\ref{X:Preferred} is not complemented: it contains two relations of the form $\tta ...= \ttb ...$, namely $\tta\ttb = \ttb \ttc$ and $\tta \ttd = \ttb \tta$. Note that the completion process of Section~\ref{S:Completion} can delete the possible complemented character of the initial presentation: for instance, starting with the complemented presentation $(\tta, \ttb, \ttc \mid \tta\ttc = \ttc\tta, \ttb\ttc = \ttc\ttb, \tta\ttb = \ttb\tta\ttc)$ of the Heisenberg monoid, the completion process leads to adding the relation $\tta\ttb = \ttc\ttb\tta$, thus yielding a non-complemeneted presentation with two relations $\tta... = \ttb...$\, \cite{Dgp}.

For a complemented presentation, reversing is a deterministic process: at each step, at most one relation is eligible, and, therefore, for every initial signed word~$\sw$, only one reversing diagram can be constructed from~$\sw$---but several reversing sequences may start from~$\sw$ as there may be several ways of enumerating the tiles of the reversing diagram.

In such a framework, a binary operation on (positive) words is naturally associated with reversing.

\begin{defi}[\bf complement]
\label{D:Complement}
For $(\SSS, \RR)$ a complemented semigroup presentation and $\ww, \www$ in~$\SSS^*$, the \emph{$\RR$-complement} of~$\www$ in~$\ww$, denoted $\ww\cpR \www$ or, simply, $\ww \cp \www$, (``$\ww$ under $\www$''), is the unique word~$\vvv$ of~$\SSS^*$ such that $\ww\inv \www$ reverses to~$\vvv \vv\inv$ for some~$\vv$ in~$\SSS^*$, if such a word exists.
\end{defi}

The symmetry of reversing guarantees that, if $\ww \cp \www$ exists, then so does $\www \cp \ww$ and, in this case, $\ww\inv \www$ reverses to $(\ww \cp \www) (\www \cp \ww)\inv$. So $\ww \cp \www$ and $\www \cp \ww$ are the last two sides of the reversing rectangle built on~$\ww$ and~$\www$, when the latter exists, \ie, when the (unique) maximal $\RR$-reversing sequence from~$\ww\inv \www$ is finite:
$$\VR(14,3)\begin{picture}(23,13)
\put(0,0){\includegraphics{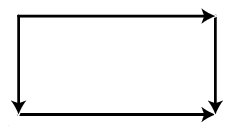}}
\put(-2.5,5){$\ww$}
\put(9.5,12){$\www$}
\put(9,5){$\revR$}
\put(22,5){$\www\cpR\ww$ .}
\put(7,-2.5){$\ww\cpR\www$}
\end{picture}$$
It is then easy to restate the cube condition as an algebraic condition satisfied by the complement operation.

\begin{prop}\cite{Dgp}
\label{P:Restate}
Assume that $(\SSS, \RR)$ is a complemented semigroup presentation. Then, for all words~$\uu, \uuu, \uuuu$ in~$\SSS^*$, the following are equivalent:

$(i)$ $(\SSS, \RR)$ satisfies the cube condition on~$\{\uu, \uuu, \uuuu\}$;

$(ii)$ either $((\uu \cp \uuu) \cp (\uu \cp \uuuu)) \cp ((\uuu \cp \uu) \cp (\uuu \cp \uuuu)))$ is the empty word or it is undefined, and the same holds for all permutations of~$\uu, \uuu, \uuuu$.

$(iii)$ either $(\uu \cp \uuu) \cp (\uu \cp \uuuu)$ and $(\uuu \cp \uu) \cp (\uuu \cp \uuuu))$ are $\RR$-equivalent or they are not defined, and the same holds for all permutations of~$\uu, \uuu, \uuuu$.
\end{prop}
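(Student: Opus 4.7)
In a complemented presentation, reversing is deterministic, and the cube condition becomes a concrete identity between iterated complements. The plan is to (a)~compute the unique positive--negative form to which $\uu\inv\uuuu\uuuu{}\inv\uuu$ reverses, (b)~identify the resulting reversing condition with the double complement of~(ii), and (c)~derive (ii)$\Leftrightarrow$(iii) from the characterization of $\eqpR$ via vanishing complements.

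For~(a), set $A=\uu\cp\uuuu$, $B=\uuuu\cp\uu$, $C=\uuu\cp\uuuu$, $D=\uuuu\cp\uuu$. The two outer negative--positive factors reverse independently to $AB\inv$ and $DC\inv$, and the remaining middle $B\inv D$ reverses to $(B\cp D)(D\cp B)\inv$. Hence $\vvv=A(B\cp D)$ and $\vv=C(D\cp B)$, and~(i) for $(\uu,\uuu,\uuuu)$ becomes $(\uu\vvv)\inv(\vv\uuu)\revR\ew$. Since reversing is deterministic in the complemented case, this last statement is equivalent to the simultaneous vanishing of $(\uu\vvv)\cp(\vv\uuu)$ and $(\vv\uuu)\cp(\uu\vvv)$.

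For~(b), one reverses the signed word $(\uu\vvv)\inv(\vv\uuu)$ step by step. Successive applications of the identity $\ww(\ww\cp\www)\eqpR\www(\www\cp\ww)$ provided by Lemma~\ref{L:Equiv}, combined with the uniqueness of the reversing path, telescope the sequence to yield
\[(\uu\vvv)\cp(\vv\uuu)\;=\;\bigl((\uu\cp\uuu)\cp(\uu\cp\uuuu)\bigr)\cp\bigl((\uuu\cp\uu)\cp(\uuu\cp\uuuu)\bigr),\]
which is exactly the expression of~(ii). Ranging over all triples in $\{\uu,\uuu,\uuuu\}^3$---where the degenerate cases are automatic and only the six permutations contribute nontrivially---then matches~(i) with~(ii) in full.

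For~(c), note that for positive words $P,Q$ the reversing rectangle $P\inv Q\revR(P\cp Q)(Q\cp P)\inv$ together with Lemma~\ref{L:Equiv} gives $P\cp Q=Q\cp P=\ew$ iff $P\eqpR Q$. Thus the double vanishing from~(ii) applied to a permutation and to the permutation obtained by swapping its first two entries yields the $\eqpR$-equivalence required by~(iii), and conversely. The main obstacle will be the telescoping in~(b): carrying out the reversing of $(\uu\vvv)\inv(\vv\uuu)$ through a sequence of local complement computations and matching the final expression to the nested complement of~(ii) requires careful positional bookkeeping, and constitutes the combinatorial core of Proposition~\ref{P:Restate}.
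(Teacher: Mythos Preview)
Your plan for (i)$\Leftrightarrow$(ii) is along the right lines and matches the paper's indication that one unwinds the cube condition through iterated complements; the paper adds the hint that one should treat the three cyclic triples $(\uu,\uuu,\uuuu)$, $(\uuu,\uuuu,\uu)$, $(\uuuu,\uu,\uuuu)$ simultaneously rather than a single permutation at a time, so your telescoping in~(b) may need that extra symmetry to close. This is bookkeeping, though, and your outline is adequate there.

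The real gap is in~(c). You assert that for positive words $P,Q$ one has $P\cp Q=Q\cp P=\ew$ \emph{if and only if} $P\eqpR Q$. The forward implication is Lemma~\ref{L:Equiv}, but the backward implication is precisely Definition~\ref{D:Complete}: it says that $\RR$-equivalence forces the reversing of $P^{-1}Q$ to reach~$\ew$, i.e.\ that the presentation is complete. Proposition~\ref{P:Restate} is stated for an arbitrary complemented presentation, with no completeness hypothesis, so you cannot invoke that direction. This is exactly why the paper singles out ``the sufficiency of~(iii)'' as the delicate step: getting from $\eqpR$-equivalence of $(\uu\cp\uuu)\cp(\uu\cp\uuuu)$ and $(\uuu\cp\uu)\cp(\uuu\cp\uuuu)$ back to the vanishing in~(ii), or directly to the cube condition~(i), requires an argument that does not presuppose completeness (indeed, the whole point of the proposition is to serve as a \emph{criterion} for completeness via Proposition~\ref{P:Cube}). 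Your current route through ``$P\eqpR Q\Rightarrow P\cp Q=\ew$'' is circular, and you will need a different mechanism---for instance, an induction on the structure of the reversing sequence, as in~\cite{Dgp}---to close (iii)$\Rightarrow$(i).
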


The equivalence of~$(i)$ and~$(ii)$ is an amusing application of the statement of the cube condition in a complemented framework, and it requires to simultaneously consider the triples $(\uu, \uuu, \uuuu)$, $(\uuu, \uuuu, \uu)$, and $(\uuuu, \uu, \uuu)$. The sufficiency of~$(iii)$ is slightly more delicate to establish. 

It may be noted that, in the complemented context, the cube of Figure \ref{F:Cube} takes the more simple---and more cube-like---form displayed in Figure~\ref{F:CubeBis}.

\begin{figure}[htb]
\begin{picture}(99,42)(0,2)
\put(0,0){\includegraphics{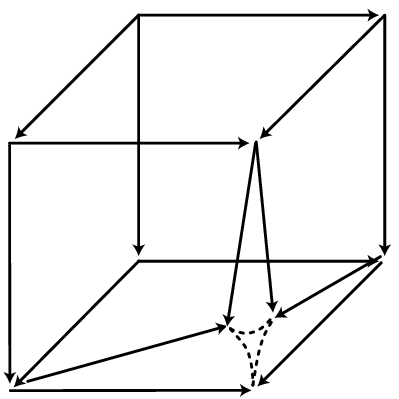}}
\put(6,34){$\uu$}
\put(15,31){$\uuuu$}
\put(26,40){$\uuu$}
\put(22,4){$\ew$}
\put(25,8){$\ew$}
\put(27.5,5){$\ew$}
\put(55,0){\includegraphics{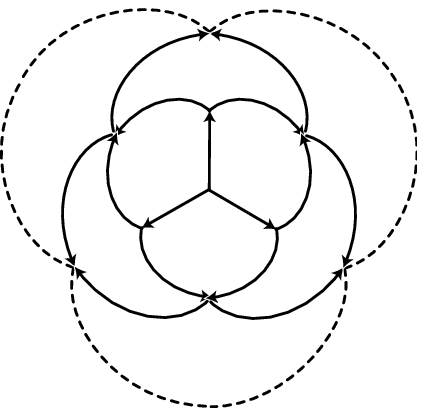}}
\put(73,26){$\uu$}
\put(80,21){$\uuu$}
\put(71.5,17){$\uuuu$}
\put(57,37){$\ew$}
\put(98,20){$\ew$}
\put(64,2){$\ew$}
\end{picture}
\caption{\sf The cube condition in a complemented context: when we draw the six faces of the cube, then reversing the three small triangular sectors leads to empty words everywhere, and the cube closes (left diagram); equivalently, starting from three edges, we use reversing to close three faces, and then repeat the process twice: at the end, everything vanishes (right diagram).}
\label{F:CubeBis}
\end{figure}

\begin{exam}
\label{X:ArtinCube}
As it is not complemented, our preferred example, namely the presentation of Example~\ref{X:Preferred}, is not eligible for the criterion of Proposition~\ref{P:Restate}. But all Artin--Tits presentations, which involve relations of the form~\eqref{E:Artin}, are eligible, and the criterion applies. For instance, it is an easy exercise to check that, for all values of the indices~$\ii, \jj, \kk$, the braid words $(\sig\ii \cp \sig\jj) \cp (\sig\ii \cp \sig\kk)$ and $(\sig\jj \cp \sig\ii) \cp (\sig\jj \cp \sig\kk)$ are equivalent (due to the symmetries of the braid relations, only three cases are to be considered, acoording to whether the indices are neighbors or not).
\end{exam}

For a presentation that is both homogeneous and complemented, the completeness criterion of Proposition~\ref{P:Cube} applies, and it can be restated using the equivalent forms of Proposition~\ref{P:Restate}. However, an alternative criterion also exists in the complemented case, which is valid even for a non-homogeneous presentation.

\begin{prop} \cite{Dgk}
\label{P:CubeBis}
Assume that $(\SSS, \RR)$ is a complemented semigroup presentation, and $\SSSh$ is a subset of~$\SSS^*$ that includes~$\SSS$ and is closed under complement, in the sense that, for all $\ww, \www$ in~$\SSSh$, the word~$\ww \cp \www$ lies in~$\SSSh$ whenever it exists.  Then $(\SSS, \RR)$ is complete if and only if it satisfies the cube condition on~$\SSSh$.
\end{prop}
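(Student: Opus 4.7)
The ``only if'' direction is immediate: by Proposition~\ref{P:Cube} (in the version not requiring homogeneity), completeness is equivalent to the cube condition on all of~$\SSS^*$, which in particular contains~$\SSSh$.

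For the converse, my plan is to upgrade the hypothesis ``cube condition on~$\SSSh$'' to ``cube condition on the whole of~$\SSS^*$'', after which completeness follows from the same Proposition~\ref{P:Cube}. The starting observation is that in the complemented setting the complement operation satisfies the two distributivity laws
$$(\ww_1 \ww_2) \cp \vv = (\ww_1 \cp \vv) \, (\ww_2 \cp (\vv \cp \ww_1)) \qquad\mbox{and}\qquad \vv \cp (\ww_1 \ww_2) = (\vv \cp \ww_1) \cp \ww_2,$$
whenever the complements involved are defined. These identities are read directly on a reversing rectangle for~$(\ww_1 \ww_2, \vv)$, by splitting it horizontally into a rectangle for~$(\ww_1, \vv)$ and one for~$(\ww_2, \vv \cp \ww_1)$. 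Combined with the closure of~$\SSSh$ under~$\cp$, they imply that every complement $\ww \cp \vv$ of words in~$\SSS^*$ is computable by iterated complementation of letters, with every intermediate complement remaining inside~$\SSSh$.

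With this in hand, I would prove the cube condition on~$\SSS^*$ by induction on $|\uu|$, with $\uuu, \uuuu$ arbitrary in~$\SSS^*$. The base case reduces by symmetry and iteration to the statement that the cube condition holds on~$\SSSh$, which is the hypothesis. For the inductive step I decompose $\uu = \uu_1 \uu_2$ and use condition~(iii) of Proposition~\ref{P:Restate}: expanding $(\uu_1 \uu_2) \cp \uuu$ and $(\uu_1 \uu_2) \cp \uuuu$ through the distributivity laws, the $\RR$-equivalence required for the triple $(\uu_1 \uu_2, \uuu, \uuuu)$ rewrites as a combination of equivalences provided by cube conditions for triples whose first entry is either~$\uu_1$ or~$\uu_2$, to which the induction hypothesis applies. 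Geometrically, this amounts to stacking the cubes associated with $(\uu_1, -, -)$ and $(\uu_2, -, -)$ along a common face to form the cube for $(\uu, \uuu, \uuuu)$.

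The main obstacle is the bookkeeping: I must check that every intermediate complement surfacing in the stacking either remains defined or is undefined in such a way that the cube conditions involved are all vacuously true, that each triple fed to the induction hypothesis genuinely has a strictly shorter first entry, and that the several $\RR$-equivalences compose consistently. The flexible form~(iii) of Proposition~\ref{P:Restate}, which asks only for $\RR$-equivalence rather than literal equality of the two iterated complements, is precisely what allows one to absorb the discrepancies that arise when two distributivity relations are composed. This is essentially the argument carried out in~\cite{Dgk}.
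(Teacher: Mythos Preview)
The paper gives no proof here, only the citation to~\cite{Dgk}, so there is no in-text argument to compare against. Your plan---bootstrap the cube condition from~$\SSSh$ to all of~$\SSS^*$ via the distributivity of the complement operation and the closure of~$\SSSh$---is indeed the strategy of~\cite{Dgk}. Two points need fixing.

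First, the distributivity laws are misstated. Under the paper's convention (Definition~\ref{D:Complement}: $\ww\cp\www$ is the bottom edge of the reversing rectangle with $\ww$ on the left and $\www$ on top), splitting the left side~$\ww_1\ww_2$ yields
\[
(\ww_1\ww_2)\cp\vv \;=\; \ww_2\cp(\ww_1\cp\vv),
\qquad
\vv\cp(\ww_1\ww_2) \;=\; (\vv\cp\ww_1)\,\big((\ww_1\cp\vv)\cp\ww_2\big),
\]
essentially the transpose of what you wrote. This is cosmetic---such laws do exist---but your formulas as written are incorrect.

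Second, and more substantively, induction on~$|\uu|$ alone does not close, and your ``symmetry and iteration'' for the base case hides the real difficulty. When you stack the cubes for $(\uu_1,\uuu,\uuuu)$ and $(\uu_2,X,Y)$, the new entries~$X,Y$ (complements such as $\uu_1\cp\uuu$) need not have smaller $\SSS$-length than~$\uuu,\uuuu$, so a nested induction on $\SSS$-lengths is not obviously well-founded. The right induction parameter is the total number of $\SSSh$-factors in the triple: write each of $\uu,\uuu,\uuuu$ as a product of elements of~$\SSSh$ (possible since $\SSS\subseteq\SSSh$), always peel off a single $\SSSh$-factor~$\uu_1$, and observe that then $\uu_1\cp\uuu$ and $\uu_1\cp\uuuu$ are again products of \emph{no more} $\SSSh$-factors than~$\uuu,\uuuu$. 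This last step is precisely where the closure of~$\SSSh$ under~$\cp$ enters; in your write-up the closure hypothesis is mentioned but never actually used in the induction. With this parameter, the base case (each entry a single $\SSSh$-element) is literally the hypothesis on~$\SSSh$, and the inductive step goes through.
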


Thus, in the complemented case, checking the cube condition not only on letters, but also on the closure of letters under complement enables one to forget about the homogeneity condition, which may be uneasy to establish (in terms of complexity hierarchies, this is a complete $\Pi_1^1$-condition, hence far from decidable).

\begin{rema}
We do not claim that the criteria of Propositions~\ref{P:Cube} and~\ref{P:CubeBis} are optimal, but it seems difficult to extend them much. In particular, all hypotheses are significant. For instance, $(\tta, \ttb, \ttc \mid \ \tta = \ttb^2 \ttc, \ttb \tta = \ttc, \ttc \tta = \ttc)$ is an example of a complemented presentation for which the cube condition holds for each triple of letters and there exists a finite set of words that is closed under complement, namely $\SSSh = \{\tta, \ttb, \ttc, \ew, \ttb\ttc\}$. Nevertheless the presentation is incomplete: $\tta$ and $\ttb \ttc \tta^2$ are equivalent words but  $\ttA \ttb \ttc \tta^2$ reverses to~$\tta^3$ and not to~$\ew$. This is compatible with the above criteria, since the presentation is not homogeneous ($\tta^3$ is equivalent to the empty word), and the cube condition fails for $(\tta, \ttb\ttc, \ttc)$, a triple from~$\SSSh$.
\end{rema}

We conclude this section with an alternative characterization of completeness (but one that leads to no practical criterion) involving the operation~$\cpR$ of Definition~\ref{D:Complement}.

\begin{prop}
\label{P:Complement}
A complemented presentation~$(\SSS, \RR)$ is complete if and only if $\cpR$ is compatible with~$\eqpR$, \ie, the conjunction of~$\uuu \eqpR \uu$ and~$\vvv \eqpR \vv$ implies $\uuu \cpR \vvv \eqpR \uu \cpR \vv$, this meaning that either the two expressions exist and are equivalent, or that neither exists.
\end{prop}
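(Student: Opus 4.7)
The plan is to prove the two implications separately, using Lemma~\ref{L:Complete} as the main tool and exploiting repeatedly the fact that, since every relation has nonempty sides, the only word $\eqpR$-equivalent to the empty word $\ew$ is $\ew$ itself.

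For the direction (compatibility $\Rightarrow$ completeness), I would first note that $\ww \cpR \ww = \ew$ in every complemented presentation, because $\ww\inv \ww$ reverses to $\ew$ by iterated application of the degenerate step $\ss\inv \ss \rev \ew$. Given $\ww \eqpR \www$, instantiating the compatibility hypothesis with $\uu = \ww$, $\uuu = \www$, $\vv = \vvv = \ww$ yields $\www \cpR \ww \eqpR \ww \cpR \ww = \ew$; the emptiness remark then forces $\www \cpR \ww = \ew$, and the symmetric instantiation gives $\ww \cpR \www = \ew$. Combining, $\ww\inv \www \revR \ew$, which is precisely completeness.

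For the converse, assume $(\SSS, \RR)$ is complete and that $\uu \eqpR \uuu$, $\vv \eqpR \vvv$. I would first handle existence: if $\uu \cpR \vv$ is defined, the reversing rectangle gives $\uu(\uu \cpR \vv) \eqpR \vv(\vv \cpR \uu)$, whence by the hypotheses $\uuu(\uu \cpR \vv) \eqpR \vvv(\vv \cpR \uu)$. Applying Lemma~\ref{L:Complete} to this common right-multiple relation produces a reversing $\uuu\inv \vvv \revR \vvvv \uuuu\inv$ together with a word $\ww$ satisfying $(\uu \cpR \vv) \eqpR \vvvv\,\ww$ and $(\vv \cpR \uu) \eqpR \uuuu\,\ww$. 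Since the presentation is complemented, reversing is deterministic, so $\vvvv = \uuu \cpR \vvv$ and this complement exists. Symmetry then settles the ``both exist or neither exists'' clause.

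The hard part is upgrading the relation $(\uu \cpR \vv) \eqpR (\uuu \cpR \vvv)\,\ww$ just obtained to the direct equivalence $(\uu \cpR \vv) \eqpR (\uuu \cpR \vvv)$, \emph{i.e.}, absorbing the spurious extension factor $\ww$. Setting $X = \uu \cpR \vv$ and $Y = \uuu \cpR \vvv$, completeness applied to $X \eqpR Y\ww$ gives $X\inv Y \ww \revR \ew$. Since reversing is confluent in the complemented setting, I may evaluate this reversing in stages: first reverse $X\inv Y$ to $(X \cpR Y)(Y \cpR X)\inv$, then reverse the residual negative--positive pattern $(Y \cpR X)\inv \ww$ to $((Y \cpR X) \cpR \ww)(\ww \cpR (Y \cpR X))\inv$. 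The terminal word then factors as $(X \cpR Y)\cdot((Y \cpR X) \cpR \ww)\cdot(\ww \cpR (Y \cpR X))\inv$; equating this \emph{as words} with $\ew$ forces, componentwise, $X \cpR Y = \ew$. The symmetric application to $Y \eqpR X\,\ww'$, produced in the same way, yields $Y \cpR X = \ew$. Hence $X\inv Y \revR \ew$, and Lemma~\ref{L:Equiv} delivers $X \eqpR Y$, that is, $\uu \cpR \vv \eqpR \uuu \cpR \vvv$, as required.
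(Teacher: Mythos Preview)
Your easy direction (compatibility $\Rightarrow$ completeness) is exactly the argument the paper gives: $\ww \cpR \ww = \ew$, compatibility forces $\www \cpR \ww \eqpR \ew$, and the fact that $\ew$ is $\eqpR$-equivalent to no nonempty word yields $\ww\inv\www \revR \ew$.

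The paper does \emph{not} prove the converse direction at all; it only remarks that the implication just described is ``specially simple'' and stops there. Your argument for completeness $\Rightarrow$ compatibility is therefore additional, and it is correct. The two ingredients you rely on are both available in the paper: Lemma~\ref{L:Complete} gives the factorizations $X \eqpR Y\ww$ and, by the symmetric application, $Y \eqpR X\ww'$; and the ``standard argument'' invoked in the proof of Proposition~\ref{P:Lcm} (the grid-splitting for complemented reversing, together with uniqueness of the terminal positive--negative word) justifies your staged evaluation of $X\inv Y\ww \revR \ew$ and the conclusion $X \cpR Y = \ew$. One small point worth making explicit: the staged evaluation presupposes that $X \cpR Y$ exists, which is exactly what the grid-splitting of Proposition~\ref{P:Lcm} guarantees once the full reversing of $X\inv Y\ww$ is known to terminate. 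With that clarified, the rest of your argument goes through cleanly.
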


One implication is specially simple: by construction, $\ww \cpR \ww = \ew$ always holds, so, if $\cpR$ is compatible with~$\eqpR$, then $\www \eqpR \ww$ implies $\www \cpR \ww = \ww \cpR \www = \ew$, because $\ew$ is $\eqpR$-equivalent to no nonempty word, and this means that $\ww\inv \www$ reverses to~$\ew$.

No extension of Proposition~\ref{P:Complement} to the non-complemented case is known.

\section{Subword reversing: uses}
\label{S:Uses}

We now turn to the uses of subword reversing. So, here, we assume that we have a complete semigroup presentation~$(\SSS, \RR)$, and explain which properties of the monoid~$\Mon\SSS\RR$ or of the group~$\Gr\SSS\RR$ can be established. The general philosophy is that, when a presentation is complete, several properties that are difficult to prove in general become easy to read, the most important one being cancellativity. 

The successive topics addressed in this section are: proving cancellativity, proving the existence of least common multiples, solving the word problems, recognizing and working in Garside monoids, obtaining minimal fractionary decompositions, and, finally, proving embeddability of a monoid in a group.

\subsection{A cancellativity criterion}

Recognizing whether a presented monoid admits cancellation\footnote{\ie, whether $\xx\yy = \xx\zz$ or $\yy \xx = \zz \xx$ implies $\yy = \zz$ (respectively, left- and right-cancellativity)} is a difficult question. A well-known criterion of Adyan~\cite{Adj}, see also~\cite{Rem}, is often useful, but it is valid only for those presentations~$(\SSS, \RR)$ in which there is no cycle for the binary relation on~$\SSS$ that connects two letters~$\ss, \sss$ if there is a relation $\ss ... = \sss ... $ in~$\RR$. In particular, the criterion is not valid whenever there exists a pair of letters with at least two relations $\ss ... = \sss ... $, or a triple of letters with at least one relation $\ss ... = \sss ... $ for each pair. By contrast, whenever we have a complete presentation---hence in a context where there are often many relations---we have the following very simple criterion.

\begin{prop}\cite{Dgp}
\label{P:Cancellation}
Assume that $(\SSS, \RR)$ is a complete semigroup presentation. Then the monoid~$\Mon\SSS\RR$ is left-cancellative if and only if $\vv\inv \vvv \revR \ew$ holds for each relation of the form $\ss \vv = \ss \vvv$ in~$\RR$. In particular, a sufficient condition for $\Mon\SSS\RR$ to be left-cancellative is that there is no relation of the form $\ss \vv = \ss \vvv$ in~$\RR$.
\end{prop}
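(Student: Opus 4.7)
The two directions are of very different flavors. The forward implication is essentially formal: if $\MM$ is left-cancellative, any relation $\ss\vv = \ss\vvv$ of $\RR$ gives $\ss\vv \eqpR \ss\vvv$, hence $\vv \eqpR \vvv$ by cancellation, and completeness (Definition~\ref{D:Complete}) then yields $\vv\inv\vvv \revR \ew$. One line suffices.

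For the converse, the plan is to reduce left-cancellativity to the case of a single-letter prefix---a routine induction on $|\uu|$---and then to apply not Definition~\ref{D:Complete} directly but the reformulation of completeness given by Lemma~\ref{L:Complete}. Concretely, from $\ss\ww_1 \eqpR \ss\ww_2$ I would invoke Lemma~\ref{L:Complete} with $\uu = \vv = \ss$, $\vvv = \ww_1$, $\uuu = \ww_2$ to extract $\uuuu, \vvvv, \ww \in \SSS^*$ satisfying $\ss\inv \ss \revR \vvvv\uuuu{}\inv$, $\ww_1 \eqpR \vvvv\ww$ and $\ww_2 \eqpR \uuuu\ww$. The key observation is that the reversing $\ss\inv \ss \revR \vvvv\uuuu{}\inv$ has length exactly one: $\ss\inv \ss$ contains a unique negative--positive pair, and after one step the result is already in positive--negative form and admits no further reversing. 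Hence this reversing uses either the implicit trivial relation---yielding $\vvvv = \uuuu = \ew$ and $\ww_1 \eqpR \ww \eqpR \ww_2$ immediately---or an actual relation $\ss\vvvv = \ss\uuuu$ of $\RR$, in which case the hypothesis furnishes $\vvvv{}\inv \uuuu \revR \ew$, so $\vvvv \eqpR \uuuu$ by Lemma~\ref{L:Equiv}, and therefore $\ww_1 \eqpR \vvvv\ww \eqpR \uuuu\ww \eqpR \ww_2$. The final ``in particular'' clause of the statement is then automatic, as its stronger hypothesis eliminates the nontrivial case altogether.

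In my view the main obstacle is recognizing that Lemma~\ref{L:Complete} is the right entry point rather than the bare Definition~\ref{D:Complete}. Starting from the definition one is naturally pushed toward a reversing $\ww_1\inv \ss\inv \ss\ww_2 \revR \ew$ and toward trying to excise the central $\ss\inv \ss$; but after the first step that middle factor has been replaced by some $\vvvv\uuuu{}\inv$ which interacts non-trivially with $\ww_1\inv$ on the left and $\ww_2$ on the right, and there is no direct way to argue that erasing the interior preserves reversibility to $\ew$. Lemma~\ref{L:Complete} sidesteps this tangle by isolating the reversing of $\ss\inv \ss$ into a single self-contained step and pushing the remaining work into the parallel factorizations $\ww_1 \eqpR \vvvv\ww$ and $\ww_2 \eqpR \uuuu\ww$, after which the hypothesis can be applied locally.
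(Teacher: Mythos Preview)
Your proof is correct. Both directions are handled properly, and your use of Lemma~\ref{L:Complete} together with the observation that any reversing of~$\ss\inv\ss$ to a positive--negative word takes exactly one step is sound.

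The paper, however, takes a more direct route and proves only the special case (no relation $\ss\vv = \ss\vvv$ in~$\RR$), remarking that the general case is ``similar, hardly more delicate.'' Its argument uses Definition~\ref{D:Complete} rather than Lemma~\ref{L:Complete}: from $\ss\ww \eqpR \ss\www$ completeness gives a reversing sequence $\ww\inv\ss\inv\ss\,\www \revR \ew$; since $\ww\inv$ is purely negative and $\www$ purely positive, the only negative--positive junction is the central~$\ss\inv\ss$, so the \emph{first} step of the sequence must act there; in the special case the only available move is the trivial deletion, and the remainder of the sequence then witnesses $\ww\inv\www \revR \ew$. So your closing paragraph somewhat overstates the difficulty of the Definition-based approach: for the special case it is entirely straightforward, and this is exactly what the paper does. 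Where your criticism is on target is the general case, where the first step may produce $\ww\inv\vvvv\,\uuuu{}\inv\www$ and one must then argue further; your route through Lemma~\ref{L:Complete} isolates the $\ss\inv\ss$ reversing cleanly and supplies the general case without extra work, which is a genuine advantage over the paper's sketch.
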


\begin{proof}[Proof (in the particular case when there is no relation $\ss \vv = \ss \vvv$ in~$\RR$)]
Assume $\ss \ww \eqpR \ss \www$. We want to prove $\ww \eqpR \www$. The completeness of~$(\SSS, \RR)$ implies $(\ss\ww)\inv (\ss\www) \revR \ew$, \ie, there exists a sequence 
$$\ww\inv \ss\inv \ss \www \rev^1 ... \rev^1 ... \rev^1 \ew.$$
Now the first step in the above reversing sequence must be $\ww\inv \ss\inv \ss \www \rev \ww\inv \www$, since there is no other possibility. But then the sequel of the sequence witnesses that $\ww\inv \www$ reverses to the empty word, hence implies that $\ww$ and $\www$ are $\RR$-equivalent, see Figure~\ref{F:Cancellation}.

The proof in the general case is similar, hardly more delicate.
\end{proof}

\begin{figure}[htb]
$$\begin{picture}(25,16)(0,2)
\put(0,0){\includegraphics{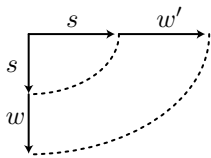}}
\put(-2,12){$\ss$}
\put(-2,5){$\ww$}
\put(6,18){$\ss$}
\put(18,18){$\www$}
\end{picture}$$
\caption{\sf Left-cancellativity: the assumption that $(\ss \ww)\inv (\ss \www)$ reverses to the empty word implies that $\ww\inv \www$ reverses to the empty word, since the first step must consist in deleting~$\ss\inv \ss$.}
\label{F:Cancellation}
\end{figure}

\begin{exam}
\label{X:Main5}
The criterion of Proposition~\ref{P:Cancellation} applies to the monoid~$\MM$ of Example~\ref{X:Preferred}. Indeed, we saw in Example~\ref{X:Main4} that
$$(\tta, \ttb, \ttc, \ttd \mid \tta\ttb = \ttb\ttc = \ttc\tta, \ttb\tta = \ttd\ttb = \tta\ttd, \ttc \tta \tta = \ttd \ttb \ttb)$$
is a complete presentation for~$\MM$. This presentation contains no relation of the form $\ss ... = \ss ...$, so the criterion implies that $\MM$ is left-cancellative. 
Note that Adyan's criterion applies neither to the above presentation, nor to the initially considered presentation of~$\MM$.
\end{exam}

\begin{ques}
How to prove that the monoid~$\MM$ of Examples~\ref{X:Preferred} is (left)-cancellative without using the criterion of Proposition~\ref{P:Cancellation}?
\end{ques}

Combining Proposition~\ref{P:Cancellation} with the completeness criterion of Proposition~\ref{P:Cube} directly leads to the result stated as Theorem~\ref{T:Intro1} in the general introduction. Also, observing that, by definition, a complemented presentation contains no relation $\ss ... = \ss ...$, we deduce

\begin{coro}
Every monoid that admits a complete complemented presentation is left-cancellative. 
\end{coro}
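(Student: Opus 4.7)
The plan is to apply Proposition~\ref{P:Cancellation} directly, with the role of the ``no relation $\ss\vv = \ss\vvv$'' hypothesis being filled automatically by the definition of ``complemented.'' First I would unpack Definition~\ref{D:Complemented}: a complemented presentation $(\SSS,\RR)$ is required to contain no relation of the form $\ss\,\ldots = \ss\,\ldots$ at all (the first clause in that definition rules it out for every letter~$\ss$ of~$\SSS$). In particular, there is certainly no relation $\ss\vv = \ss\vvv$ with $\vv \neq \vvv$ in~$\RR$.

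Next I would invoke the ``in particular'' half of Proposition~\ref{P:Cancellation}: for a complete presentation with no relation of the form $\ss\vv = \ss\vvv$, the associated monoid~$\Mon\SSS\RR$ is left-cancellative. Since our presentation is complete by hypothesis and complemented by hypothesis, both assumptions are in place, and the conclusion follows at once.

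There is essentially no obstacle: the argument is a one-line application of the previous proposition, made possible by the fact that the defining syntactic restriction on a complemented presentation is strictly stronger than the ``no left-obstruction'' hypothesis of Proposition~\ref{P:Cancellation}. The only thing to verify, if one wanted to be pedantic, is that the word ``complete'' in the corollary refers to completeness with respect to right-reversing (which is what Proposition~\ref{P:Cancellation} uses), as matches the convention of Definition~\ref{D:Complete}.
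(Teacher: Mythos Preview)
Your proposal is correct and matches the paper's own argument exactly: the corollary is deduced by observing that, by Definition~\ref{D:Complemented}, a complemented presentation contains no relation $\ss\ldots = \ss\ldots$, so the ``in particular'' clause of Proposition~\ref{P:Cancellation} applies directly.
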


This applies in particular to all Artin--Tits monoids, as first established in~\cite{Gar} in the case of braid monoids and in~\cite{Dlg, BrS} in the general case---and, more generally, to a number of presentations defining Garside monoids (see Section~\ref{S:Garside} below).

\subsection{Existence of least common multiples}

The next application involves least common multiples. We recall that, if $\MM$ is a monoid, and $\xx, \yy$ are elements of~$\MM$, we say that $\yy$ is a \emph{right-multiple} of~$\xx$, or, equivalently, that $\xx$ is a \emph{left-divisor} of~$\yy$, if $\yy = \xx \yy'$ holds for some~$\yy'$. As was pointed out above, subword reversing, when it terminates, produces common right-multiples: assuming that $\MM$ is~$\Mon\SSS\RR$ and that $\ww, \www$ are two words in the alphabet~$\SSS$, reversing~$\ww\inv \www$ leads to a word of the form~$\vvv \vv\inv$ and, in that case, Lemma~\ref{L:Equiv} says that the words~$\ww \vvv$ and $\www \vv$ are $\RR$-equivalent, \ie, they represent a common right-multiple of the elements of~$\MM$ represented by~$\ww$ and~$\www$. 

In the same context, we say that an element~$\zz$ of the monoid~$\MM$ is a \emph{least common right-multiple}, or \emph{right-lcm}, of~$\xx$ and~$\yy$ if $\zz$ is a right-multiple of~$\xx$ and~$\yy$, and every common right-multiple of~$\xx$ and~$\yy$ is a right-multiple of~$\zz$. The following notion has become standard.

\begin{defi} [\bf local lcm]
A monoid~$\MM$ is said to \emph{admit local right-lcm's} if any two elements of~$\MM$ that admit a common right-multiple admit a right-lcm.
\end{defi}

In general, it is uneasy to establish that two elements in a presented monoid possibly admit an lcm, and, therefore, to possibly recognize those monoids that admit local lcm's. This becomes easy whenever a complete presentation is known.

\begin{prop} \cite{Dgp}
\label{P:Lcm}
Assume that $(\SSS, \RR)$ is a complete semigroup presentation. Then a sufficient condition for the monoid~$\Mon\SSS\RR$ to admit local right-lcm's is that $(\SSS, \RR)$ is complemented (in the sense of Definition~\ref{D:Complemented}).
\end{prop}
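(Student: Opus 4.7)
The plan is to use completeness (via Lemma~\ref{L:Complete}) to turn an abstract common right-multiple into a \emph{reversing} diagram, and then use the complemented assumption to show that the word produced by that reversing is canonical, hence yields a genuine least common right-multiple.

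First I would start with $x,y\in\MM$ represented by words $\ww,\www\in\SSS^*$, and assume they admit some common right-multiple $z'=xa=yb$. Representing $a,b$ by positive words $\vvv_0,\vv_0$, we have $\ww\vvv_0\eqpR\www\vv_0$. Applying Lemma~\ref{L:Complete} to this equivalence, there exist $\vvvv,\uuuu,\uu'\in\SSS^*$ with
\[
\ww\inv\www\revR\vvvv\uuuu\inv,\qquad \vv_0\eqpR\uuuu\uu',\qquad \vvv_0\eqpR\vvvv\uu'.
\]
In particular, the existence of a common right-multiple forces the reversing of $\ww\inv\www$ to terminate on a positive--negative word.

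Next I invoke the complemented hypothesis. As emphasized just after Definition~\ref{D:Complemented}, in a complemented presentation at most one relation is eligible at each reversing step, so the reversing diagram starting from $\ww\inv\www$ is unique, and the terminal word on it is therefore unique. By Definition~\ref{D:Complement}, this terminal word is $(\www\cpR\ww)(\ww\cpR\www)\inv$. Hence the words $\vvvv$ and $\uuuu$ produced above are independent of the initial choice of $\vv_0,\vvv_0$: they equal $\www\cpR\ww$ and $\ww\cpR\www$.

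Finally I set $z\in\MM$ to be the element represented by $\ww(\www\cpR\ww)$. Lemma~\ref{L:Equiv} gives $\ww(\www\cpR\ww)\eqpR\www(\ww\cpR\www)$, so $z$ is a common right-multiple of $x$ and $y$. For any other common right-multiple $z'=xa=yb$, the second step applied to a representation of $a,b$ yields $\vvv_0\eqpR(\www\cpR\ww)\uu'$, whence
\[
z'=x\cdot a=x\cdot(\www\cpR\ww)\cdot[\uu']=z\cdot[\uu'],
\]
where $[\uu']$ denotes the element of $\MM$ represented by $\uu'$. Thus $z'$ is a right-multiple of $z$, so $z$ is the right-lcm of $x$ and $y$. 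The main obstacle is the uniqueness-of-reversing argument in the second step: everything hinges on the fact that, in the complemented case, although different enumerations of reversing sequences exist, all of them fill in the same diagram and produce the same terminal word, so that Lemma~\ref{L:Complete} actually pins down a canonical $(\vvvv,\uuuu)$.
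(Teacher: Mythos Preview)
Your argument is correct and follows essentially the same route as the paper: the paper reverses $(\ww\vvv)^{-1}(\www\vv)$ to~$\ew$ using completeness and then splits the resulting diagram into four subdiagrams (Figure~\ref{F:Lcm}), which is precisely the content of Lemma~\ref{L:Complete}, so your direct invocation of that lemma is just a cleaner citation of the same step, and the uniqueness of the terminal word in the complemented case is used identically in both versions. One small slip: by Definition~\ref{D:Complement}, reversing $\ww^{-1}\www$ yields $(\ww\cpR\www)(\www\cpR\ww)^{-1}$, not $(\www\cpR\ww)(\ww\cpR\www)^{-1}$, so your $z$ should be the class of $\ww(\ww\cpR\www)$ --- the logic is unaffected once the arguments of~$\cpR$ are swapped consistently.
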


\begin{proof}[Proof (sketch)]
For~$\ww$ in~$\SSS^*$, let $\clp\ww$ denote the element of the monoid $\Mon\SSS\RR$ represented by~$\ww$. Let $\ww, \www$ belong to~$\SSS^*$ and assume that $\clp\ww$ and~$\clp{\www}$ admit a common right-multiple~$\zz$. This means that there exist words~$\vv, \vvv$ in~$\SSS^*$ such that $\ww \vvv$ and $\www \vv$ both represent~$\zz$, hence are $\RR$-equivalent. As the presentation is complete, $(\ww \vvv)\inv (\www \vv)$ reverses to the empty word. A standard argument shows that the reversing diagram can be split into four subdiagrams as shown in Figure~\ref{F:Lcm}. We deduce that $\ww \cp \www$ and $\www \cp \ww$ exist and that $\zz$ is a right-multiple of the element~$\clp{\ww (\ww \cp \www)}$. By construction, the latter element only depends on~$\clp\ww$ and~$\clp{\www}$. So, every common right-multiple of~$\clp\ww$ and~$\clp{\www}$ is a right-multiple of~$\clp{\ww (\ww \cp \www)}$, which is therefore a right-lcm of~$\clp\ww$ and~$\clp{\www}$.
\end{proof}

\begin{figure}[htb]
$$\begin{picture}(29,19)
\put(0,0){\includegraphics{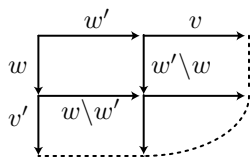}}
\put(-3,12){$\ww$}
\put(-3,5){$\vvv$}
\put(7,18){$\www$}
\put(21,18){$\vv$}
\put(16,12){$\www \cp \ww$}
\put(4,6){$\ww \cp \www$}
\end{picture}$$
\caption{\sf Least common right-multiple: every common right-multiple of the element represented by the words~$\ww$ and~$\www$ is a right-multiple of the element represented by~$\ww(\ww \cp \www)$ and $\www(\www \cp \ww)$.}
\label{F:Lcm}
\end{figure}

Combining Proposition~\ref{P:Lcm} with the completeness criterion of Proposition~\ref{P:Cube} gives now the result stated as Theorem~\ref{T:Intro2} in the general introduction.

\begin{exam}
\label{X:FlagBraid}
The completeness assumption is crucial in Proposition~\ref{P:Lcm}, as shows the example of
\begin{equation}
\label{E:FlagBraid}
\MM = \Mon{\tta,\ttb,\ttc}{\tta\ttb\tta = \ttb^2, \tta\ttc\tta = \ttc\ttb, \ttb\ttc\tta = \ttc^2}.
\end{equation}
Indeed it is easy to see that the relations of~\eqref{E:FlagBraid} provide a presentation of Artin's group braid group~$B_4$---see Example~\ref{X:Braid} below---in terms of the non-standard generators $\tta = \sig1$, $\ttb = \sig2 \sig1$, $\ttc = \sig3 \sig2 \sig1$. The above presentation is complemented: for each pair of generators, there exists exactly one relation in~\eqref{E:FlagBraid} that provides a common right-multiple, and one might think that  right lcm's exist in~$\MM$. However this is \emph{not} the case. Indeed, in~$\MM$, we have $\tta \cdot \ttb\tta = \ttb \cdot \ttb$, but also $\tta \cdot \ttc^2 \ttb = \ttb \cdot \ttc \tta \ttc$, as shows the derivation
$$\tta \ttc^2 \ttb = \tta \ttc \tta \ttc \tta = \ttc \ttb \ttc \tta = \ttc^3 = \ttb \ttc \tta \ttc.$$
Now, $\ttc \tta \ttc$ cannot be a right-multiple of~$\ttb$ in~$\MM$, since no relation of~\eqref{E:FlagBraid} applies to the word~$\ttc \tta \ttc$ and, therefore, the latter cannot be equivalent to a word beginning with the letter~$\ttb$. So, in~$\MM$, the elements~$\tta$ and~$\ttb$ admit no right-lcm. 

The reason for the inapplicability of Proposition~\ref{P:Lcm} is that the presentation~\eqref{E:FlagBraid} is not complete. Indeed, the cube condition fails for the triple $(\tta, \ttb, \ttc)$: the word~$\ttA \ttc \ttC \ttb$ reverses to $\ttc \tta \ttc \tta \ttC \ttA \ttC$, and $(\tta \cdot \ttc \tta \ttc \tta)\inv (\ttb \cdot \ttc \tta \ttc)$, \ie, $\ttA \ttC \ttA \ttC \ttA \ttb \ttc \tta \ttc$, reverses to~$\tta \ttA$, whereas the cube condition would require that it reverses to the empty word. 

Getting a complete presentation for~$\MM$ seems uneasy: after adding the missing relation $\tta \ttc \tta \ttc \tta = \ttb \ttc \tta \ttc$ that fixes the previous obstruction, new obstructions appear. For instance, the cube condition fails for the triple $(\ttb, \ttb, \tta)$, leading to relations of increasing length. It turns out that the elements involved in the new relations need not divide the element~$\ttc^4$, which represent the braid~$\Delta_4^2$ and might be expected to play the role of a fixed point here. 
\end{exam}

\begin{ques}
Does the monoid of~\eqref{E:FlagBraid} admit left-cancellation? Does it embed in its enveloping group?
\end{ques}

Apart from subword reversing, which remains useless as long as no complete presentation has been identified, no general method seems to be eligible here, and even the above natural questions seem to be open. (However we conjecture that $\MM$ is isomorphic to a submonoid of Artin's braid group~$B_4$ via the mapping $\tta \mapsto \sig1$, $\ttb \mapsto \sig2 \sig 1$, and $\ttc \mapsto \sig3 \sig2 \sig1$.)

\subsection{Word problems}
\label{S:WordProblem}

The next application of subword reversing involves word problems. As explained in Remark~\ref{R:WordProblem}, the completeness of~$(\SSS, \RR)$ need not automatically provide a solution for the word problem of the presented monoid~$\Mon\SSS\RR$ or (even less) of the presented group~$\Gr\SSS\RR$, because reversing may never terminate, \ie, there may exist infinite reversing sequences never reaching any terminal word of the form~$\vvv \vv\inv$ with $\vv, \vvv$ in~$\SSS^*$.

\begin{exam}
\label{X:NonTerminate}
Consider the Baumslag--Solitar presentation $(\tta, \ttb \mid \tta^2 \ttb = \ttb \tta)$. Then we find
$$\ttB \tta \ttb \rev \tta \ttB \ttA \ttb \rev \tta \ttB \tta \ttb \ttA,$$
and it is clear that reversing will never terminate since, starting with a signed word~$\sw$, we arrived in two steps at a word that properly includes~$\sw$. Thus, there exists an infinite reversing sequence from~$\sw$ that contains longer and longer words.

Similarly, the type~$\widetilde A_2$ Artin--Tits presentation $$(\tta, \ttb, \ttc \mid \tta\ttb\tta = \ttb\tta\ttb, \ttb\ttc\ttb =\nobreak \ttc\ttb\ttc, \tta\ttc\tta = \ttc\tta\ttc)$$ gives the reversing sequence
$$\ttB \tta\ttc \rev \tta\ttb\ttA\ttB \ttc \rev \tta \ttb \ttA \ttc \ttb \ttC \ttB \rev
\tta \ttb \ttc \tta \ttC \ttA \ttb \ttC \ttB,$$
\ie, we go in three steps from a signed word~$\sw$ to a word that admits~$\sw\inv$ as a proper subword, whence again an infinite reversing sequence.
\end{exam}

We are thus lead to looking for conditions guaranteeing the termination of reversing. First, we have the following characterization, whose proof is similar to that of Proposition~\ref{P:Lcm}.

\begin{lemm}
\label{L:Common}
If $(\SSS, \RR)$ is a complete semigroup presentation, then, for all~$\ww, \www$ in~$\SSS^*$, at least one reversing sequence starting from~$\ww\inv \www$ ends with a word of the form~$\vvv \vv\inv$ with $\vv, \vvv$ in~$\SSS^*$ if and only if the elements of~$\Mon\SSS\RR$ represented by~$\ww$ and~$\www$ admit a common right-multiple.
\end{lemm}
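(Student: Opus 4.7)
The ``only if'' direction follows directly from Lemma~\ref{L:Equiv}: if $\ww\inv \www \revR \vvv \vv\inv$ with $\vv, \vvv$ in~$\SSS^*$, then $\ww \vvv \eqpR \www \vv$, so $\clp{\ww \vvv}$ is a common right-multiple in~$\Mon\SSS\RR$ of~$\clp\ww$ and~$\clp\www$.

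For the converse, I plan to argue along the lines of the proof of Proposition~\ref{P:Lcm}. Assume that $\clp\ww$ and $\clp\www$ share a common right-multiple, so that there exist~$\uu, \uuu$ in~$\SSS^*$ with $\ww \uuu \eqpR \www \uu$. By the completeness of~$(\SSS, \RR)$, the signed word~$\uuu\inv \ww\inv \www \uu$ admits a finite reversing sequence
$$\uuu\inv \ww\inv \www \uu = \sw_0 \revR \sw_1 \revR \cdots \revR \sw_N = \ew,$$
and my aim is to extract from it a reversing of~$\ww\inv \www$ that terminates in a word of the form~$\vvv \vv\inv$ with $\vv, \vvv$ in~$\SSS^*$.

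The key observation is that two reversing steps whose $\ss\inv \sss$ patterns occupy disjoint positions commute. I would use this to reschedule the given sequence so that every step whose pattern lies inside the factor descending from the initial middle block~$\ww\inv \www$ is performed before any step that involves a letter descending from~$\uuu\inv$ or from~$\uu$. The prefix of such ``inner'' steps is then a reversing sequence beginning at~$\ww\inv \www$ and ending in some signed word~$\sw$ such that no further inner step is possible. Since a signed word containing no negative--positive factor~$\ss\inv \sss$ necessarily consists of a block of positive letters followed by a block of negative letters, we obtain $\sw = \vvv \vv\inv$ for some $\vv, \vvv$ in~$\SSS^*$, which is the desired conclusion.

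The main technical obstacle is a rigorous justification of the rescheduling, i.e., ensuring that the ``inner'' and ``boundary'' patterns never overlap so that the corresponding steps can be freely permuted. Geometrically, this amounts to isolating, inside the full reversing diagram of~$\uuu\inv \ww\inv \www \uu$, the sub-tiling induced on the rectangle bounded by the initial $\ww$ and $\www$ sides, and verifying that this sub-tiling is itself a valid reversing diagram whose right and bottom boundary paths spell out~$\vv$ and~$\vvv$. Termination of the inner prefix is automatic, since the full original sequence is finite.
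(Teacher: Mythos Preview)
Your proposal is correct and follows essentially the same approach as the paper, which explicitly states that the proof is ``similar to that of Proposition~\ref{P:Lcm}.'' Your rescheduling argument is precisely the ``standard argument'' invoked there to split the reversing diagram for $(\ww\uuu)\inv(\www\uu)$ into four subdiagrams (Figure~\ref{F:Lcm}): the top-left subdiagram is the reversing of~$\ww\inv\www$ you are after, and the remaining three subdiagrams absorb the boundary steps involving~$\uuu\inv$ and~$\uu$. The technical point you flag---that inner and boundary steps never overlap---is exactly the content of this decomposition, and your geometric description of it is accurate.
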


However, Lemma~\ref{L:Common} leads to no practical criterion when we wish to use reversing to establish properties of a still unknown monoid.

Counter-examples to termination like those of Example~\ref{X:NonTerminate}
can occur only when at least one relation involves a word of length~$3$ or more. Indeed, otherwise, the length of the signed words appearing in a reversing sequence does not increase, and termination in guaranteed. Similar results hold for relations involving words of arbitrary length whenever there exists a set of words~$\SSSh$ that includes the original alphabet~$\SSS$ and is closed under reversing in the sense of Proposition~\ref{P:CubeBis} (or its extension to a non-complemented context). Indeed, in this case, the hypothesis means that, in terms of the alphabet~$\SSSh$, every relation involves words of length at most two. Various results can be  established along this line (see~\cite{Dgp}), and we just mention a general one.

\begin{prop} \cite{Dgp}
\label{P:Terminate}
Assume that $(\SSS, \RR)$ is a complete semigroup presentation and there exists a subset~$\SSSh$ of~$\SSS^*$ that includes~$\SSS$ and satisfies the conditions
\begin{gather}
\label{E:Terminate1}
\forall \uu, \uuu \in \SSSh \ \exists \vv, \vvv \in \SSSh \ 
(\uu\inv \uuu \revR \vvv \vv\inv),\\
\label{E:Terminate2}
\forall \uu, \uuu \in \SSSh \ \forall \vv, \vvv \in \SSS^* \ 
(\uu\inv \uuu \revR \vvv \vv\inv \ \Rightarrow \
\vv, \vvv \in \SSSh).
\end{gather}
Then every $\RR$-reversing sequence leads in finitely many steps to a positive--negative word. If $\SSSh$ is finite, then the word problem of the presented monoid~$\Mon\SSS\RR$ is solvable in exponential time, and in quadratic time if $(\SSS, \RR)$ is complemented.
\end{prop}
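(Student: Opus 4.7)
My strategy is to lift the $\SSS$-level reversing process to a coarser $\SSSh$-indexed process, where \eqref{E:Terminate1} and \eqref{E:Terminate2} force lengths to stay bounded. Given an input $\sw_0 = \ww\inv\www$ with $\ww, \www \in \SSS^*$, write $\ww = \uu_1 \cdots \uu_p$ and $\www = \uuu_1 \cdots \uuu_q$, viewing each $\SSS$-letter as an element of $\SSSh$ (valid since $\SSS \subseteq \SSSh$). I would build an $\SSSh$-indexed reversing grid by processing the open NW-corners in a top-left to bottom-right order: at a corner with $\SSSh$-inputs $(\uu, \uuu)$ one invokes \eqref{E:Terminate1} to obtain $\vv, \vvv \in \SSSh$ with $\uu\inv\uuu \revR \vvv\vv\inv$, and \eqref{E:Terminate2} forces these outputs to remain in~$\SSSh$, so that the inductive construction can continue at the next open corner. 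After $p \cdot q$ such $\SSSh$-tiles have been filled, the diagram is complete, with its SE-boundary being a positive--negative word whose components lie in~$\SSSh$.

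Each $\SSSh$-tile $\uu\inv\uuu \revR \vvv\vv\inv$ unfolds to a finite block of $\SSS$-level reversing steps (the inner $\SSS$-level sub-diagram, itself finite since $\uu$ and $\uuu$ are finite $\SSS$-words). Hence the complete diagram has a bounded total number of $\SSS$-level tiles. I would then use local confluence of subword reversing---two simultaneously eligible reversings at disjoint positions commute, and at overlapping positions they join via the cube condition---together with the finiteness just established and Newman's lemma, to conclude that each $\SSS$-level tile is constructed at most once in any reversing sequence from~$\sw_0$. Thus every $\RR$-reversing sequence from~$\sw_0$ has length bounded by the number of $\SSS$-tiles in the diagram, hence terminates; for a general starting signed word, the same argument applies at each of its finitely many negative--positive boundaries.

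For the complexity statement, assume $\SSSh$ is finite and let $L$ be the maximum $\SSS$-length of an element of~$\SSSh$. Every edge of the $\SSSh$-grid then has $\SSS$-length at most~$L$, and with $p, q \le n := |\ww| + |\www|$ there are at most $L^2 \cdot n^2 = O(n^2)$ $\SSS$-level tiles in the diagram. If $(\SSS, \RR)$ is complemented, at each $\SSS$-pattern at most one relation is eligible, so reversing is deterministic: a single traversal of the diagram in $O(n^2)$ steps decides whether $\sw_0 \revR \ew$, giving the quadratic bound. In the general case, several relations may apply at a given $\SSS$-pattern, so different choices may lead to different terminal words and to different $\SSSh$-completions; to decide $\sw_0 \revR \ew$ one must explore the tree of all possible reversing sequences. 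With $O(n^2)$ decision points and at most a presentation-dependent constant of alternatives at each, this search terminates in at most $2^{O(n^2)}$ steps, giving the claimed exponential upper bound.

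The main technical obstacle I expect is securing the propagation of the $\SSSh$-decomposition across the whole diagram: at each stage one must check that the current SE-boundary segments are elements of~$\SSSh$ so that \eqref{E:Terminate1} can be invoked at the next open corner. I would prove this by induction on the number of filled $\SSSh$-tiles, the base case being the initial edges in $\SSS \subseteq \SSSh$ and the inductive step using \eqref{E:Terminate2} applied to the just-completed tile. A minor subtlety is the treatment of degenerate tiles in which $\vv$ or $\vvv$ is the empty word, either by including~$\ew$ implicitly among the admissible $\SSSh$-words or by omitting the corresponding side of the next corner.
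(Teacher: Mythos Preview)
Your grid construction is exactly the paper's approach, and your complexity analysis matches as well. The gap is in the passage from ``one terminating sequence exists'' to ``every reversing sequence terminates''. Newman's lemma derives confluence from local confluence \emph{plus} termination; it cannot be used to establish termination itself. Local confluence together with the existence of one terminating sequence does not exclude infinite sequences (consider the abstract system $a\to b$, $a\to a$). Your side remark that reversings ``at overlapping positions join via the cube condition'' is also off: distinct reversible patterns $\ss\inv\sss$ in a signed word never overlap, so local confluence is trivial and the cube condition is irrelevant here. Finally, the parenthetical ``itself finite since $\uu$ and $\uuu$ are finite $\SSS$-words'' is not a valid justification---the Baumslag--Solitar example in the paper has finite inputs and non-terminating reversing; what makes your constructed cell finite is that you invoked the specific terminating reversing supplied by~\eqref{E:Terminate1}.

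The paper's sketch handles the ``every'' claim structurally rather than via confluence: it asserts that Condition~\eqref{E:Terminate2} forces \emph{every} reversing diagram built from~$\sw$, not just the particular one you constructed, to decompose into the same $p\times q$ rectangular $\SSSh$-grid with all internal edges in~$\SSSh$. Once that is in hand, no reversing can produce more tiles than fit in the grid, so the length of any sequence is bounded uniformly and termination follows. In other words, the correct argument bounds the size of an arbitrary diagram directly, rather than trying to transport finiteness from your chosen diagram to all others via a rewriting lemma.
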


\begin{proof}[Proof (sketch)]
As shown in Figure~\ref{F:WordPb}, the general form of (the diagram associated with) a signed word~$\sw$ on the alphabet $\SSS \cup \SSS\inv$ is a staircase whose elementary edges belong to~$\SSS$, hence to~$\SSSh$. Then Condition~\eqref{E:Terminate2} implies that every reversing diagram from~$\sw$ splits into a rectangular grid all of which edges belong to~$\SSSh$, whereas Condition~\eqref{E:Terminate1} guarantees that at least one such grid exists.  If the initial word contains $\pp$~letters of~$\SSS$ and $\qq$~letters of~$\SSS\inv$, the grid contains at most $\pp\qq$ squares. If $\SSSh$ is finite, there exist only a finite number of such squares and, therefore, one complete diagram can be constructed in time bounded by~$O(\pp\qq)$.

In order to solve the word problem of the presented monoid~$\Mon\SSS\RR$, starting with two words~$\uu, \uuu$ in the alphabet~$\SSS$, one has to reverse in all possible ways~$\uu\inv \uuu$ and look whether at least one reversing leads to the empty word. Each reversing requires a quadratic amount of time, but there may be exponentially many diagrams and the resulting time upper bound is exponential.

If the presentation is complemented, then there is only one reversing diagram, and, therefore, the overall procedure requires quadratic time only.
\end{proof}

\begin{figure}[htb]
$$\begin{picture}(62,25)
\put(0.5,0.5){\includegraphics{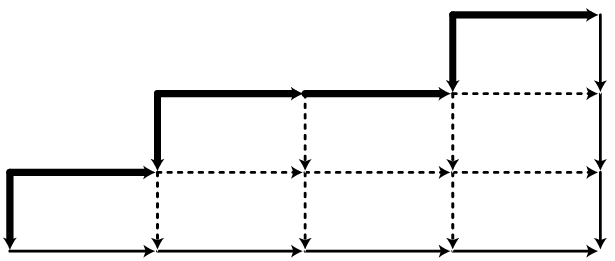}}
\put(30,19.5){$\sw$}
\put(2,4){$\scriptstyle{\in}\SSSh$}
\put(17,4){$\scriptstyle{\in}\SSSh$}
\put(17,12){$\scriptstyle{\in}\SSSh$}
\put(32,4){$\scriptstyle{\in}\SSSh$}
\put(32,12){$\scriptstyle{\in}\SSSh$}
\put(47,4){$\scriptstyle{\in}\SSSh$}
\put(47,12){$\scriptstyle{\in}\SSSh$}
\put(47,20){$\scriptstyle{\in}\SSSh$}
\put(57,4){$\scriptstyle{\in}\SSSh$}
\put(57,12){$\scriptstyle{\in}\SSSh$}
\put(57,20){$\scriptstyle{\in}\SSSh$}
\put(7,2.5){$\scriptstyle{\in}\SSSh$}
\put(7,6.3){$\scriptstyle{\in}\SSSh$}
\put(22,2.5){$\scriptstyle{\in}\SSSh$}
\put(22,6.3){$\scriptstyle{\in}\SSSh$}
\put(22,14.3){$\scriptstyle{\in}\SSSh$}
\put(37,2.5){$\scriptstyle{\in}\SSSh$}
\put(37,6.3){$\scriptstyle{\in}\SSSh$}
\put(37,14.3){$\scriptstyle{\in}\SSSh$}
\put(52,2.5){$\scriptstyle{\in}\SSSh$}
\put(52,6.3){$\scriptstyle{\in}\SSSh$}
\put(52,14.3){$\scriptstyle{\in}\SSSh$}
\put(52,22.3){$\scriptstyle{\in}\SSSh$}
\put(63,13){$\vv$}
\put(30,-2.5){$\vvv$}
\end{picture}$$
\caption{\sf Termination of subword reversing  in Proposition~\ref{P:Terminate}: every edge in the rectangular grid belongs to the subset~$\SSSh$, so the length cannot explode and one reaches a positive--negative word~$\vvv \vv\inv$ in finitely many steps, actually in $O(\pp\qq)$ steps where $\pp$ (\resp $\qq$) is the number of positive (\resp negative) letters in the initial signed word.}
\label{F:Termination}
\end{figure}

Note that, if $(\SSS, \RR)$ is a complemented presentation, then \eqref{E:Terminate1} and~\eqref{E:Terminate2} simply mean that $\SSSh$ is closed under complement in the sense that, for all~$\uu, \uuu$ in~$\SSSh$, the words $\uu \cp \uuu$ and~$\uuu \cp \uu$ exist and belong to~$\SSSh$.

\begin{coro}
\label{C:WordPb}
Under the hypotheses of Proposition~\ref{P:Terminate}, and if, in addition, the monoid $\Mon\SSS\RR$ is right-cancellative, the word problem of the presented  group $\Gr\SSS\RR$ is solvable in exponential time (quadratic time of the presentation is complemented), and the group satisfies a quadratic isoperimetric inequality.
\end{coro}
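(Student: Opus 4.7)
The strategy is to reduce the word problem for~$\GG = \Gr\SSS\RR$ to two successive applications of reversing. Given a signed word~$\sw$ in $\SSS \cup \SSS\inv$, first I would reverse it to a positive--negative form~$\vvv \vv\inv$ (possible by Proposition~\ref{P:Terminate}), and then decide whether $\vv\inv \vvv \revR \ew$ by a second reversing.

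Correctness hinges on the embedding $\MM = \Mon\SSS\RR \hookrightarrow \GG$, which I would establish via Ore's theorem. The Ore condition is at hand: termination of reversing together with Lemma~\ref{L:Equiv} shows that any two positive words~$\ww, \www$ satisfy $\ww\vvv \eqpR \www\vv$ for some~$\vv, \vvv$, so the elements they represent admit a common right-multiple. Right-cancellativity is given by hypothesis; left-cancellativity follows from Proposition~\ref{P:Cancellation} applied to~$(\SSS,\RR)$ (possibly after a completion step). Ore's theorem then produces a group of right-fractions in which $\MM$ embeds, and that group coincides with~$\Gr\SSS\RR$ by the universal property of presentations. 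Consequently $\sw$ represents~$1$ in~$\GG$ iff $\vvv = \vv$ in~$\GG$ iff $\vv \eqpR \vvv$ iff $\vv\inv \vvv \revR \ew$, the last step being completeness.

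For complexity, write $n = |\sw| = \pp + \qq$ with $\pp, \qq$ the numbers of positive and negative letters. Proposition~\ref{P:Terminate} realizes the first reversing as an $O(\pp \qq)$-cell grid whose edges lie in the finite set~$\SSSh$; bounding the $\SSS$-length of any element of~$\SSSh$ by a uniform constant~$C$ gives $|\vv|, |\vvv| = O(n)$, so the second reversing is an $O(n^2)$-cell grid. In the complemented case each diagram is unique and each cell is produced in $O(1)$, giving quadratic total time; in the general case several reversings may be eligible at each step and all must be explored, yielding an exponential upper bound while each individual diagram remains of polynomial size. Concatenating the two reversing diagrams along their common boundary yields, whenever $\sw$ represents~$1$, a van Kampen diagram of area~$O(n^2)$, which is the claimed quadratic isoperimetric inequality. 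The main obstacle is the embedding step---ensuring that left-cancellativity is in force so that Ore's theorem applies, and identifying the resulting group of fractions with the abstract presentation~$\Gr\SSS\RR$---after which the remainder is a routine bookkeeping of cell counts in the reversing grids, using the equivalences supplied by Lemmas~\ref{L:Equiv} and~\ref{L:Common} and the termination guarantee of Proposition~\ref{P:Terminate}.
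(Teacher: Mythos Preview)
Your approach is essentially the paper's: use Ore to embed $\MM$ in $\GG$, then solve the word problem by a double reversing (first $\sw \revR \vvv\vv\inv$, then test $\vv\inv\vvv \revR \ew$), and read off the quadratic area bound from the grid structure of Proposition~\ref{P:Terminate}. The chain of equivalences and the complexity bookkeeping match the paper's argument.

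The one soft spot is your justification of left-cancellativity. Invoking Proposition~\ref{P:Cancellation} ``possibly after a completion step'' does not work: the presentation is \emph{already} complete by hypothesis, and Proposition~\ref{P:Cancellation} does not yield left-cancellativity unconditionally---it requires the extra hypothesis that every relation $\ss\vv = \ss\vvv$ in~$\RR$ satisfies $\vv\inv\vvv \revR \ew$, which is not among the hypotheses of Proposition~\ref{P:Terminate} or of the corollary. In fairness, the paper's own proof simply asserts that ``the monoid $\Mon\SSS\RR$ is cancellative'' without addressing this point either; the gap is only closed later when the result is specialized (to complemented presentations, where no relation $\ss\ldots = \ss\ldots$ can occur) to obtain Theorem~\ref{T:Intro3}. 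So you have correctly identified the embedding step as the crux, but your proposed patch for left-cancellativity is not the right one; the honest statement is that left-cancellativity must be assumed (or secured via the complemented hypothesis) alongside right-cancellativity.
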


\begin{proof}
Under the current assumptions, the monoid~$\Mon\SSS\RR$ is cancellative and any two elements admit a common right-multiple. So Ore's conditions are satisfied, and the monoid $\Mon\SSS\RR$ embeds in the group~$\Gr\SSS\RR$, which is a group of fractions~\cite{ClP}. Let $\sw$ be a signed word in the alphabet~$\SSS \cup \SSS\inv$. Then $\sw$ reverses to some positive--negative word $\vvv \vv\inv$, so, using $\eqR$ for the group equivalence, we have
$$\sw \eqR \ew
\quad\Leftrightarrow\quad
\vvv \vv\inv \eqR \ew
\quad\Leftrightarrow\quad
\vv \eqR \vvv
\quad\Leftrightarrow\quad
\vv \eqpR \vvv
\quad\Leftrightarrow\quad
\vv\inv \vvv \revR \ew.$$
This shows that the word problem of~$\Gr\SSS\RR$ can be solved using a double reversing: first reverse~$\sw$ to~$\vvv \vv\inv$, then switch the factors into~$\vv\inv \vvv$ and reverse again; then $\sw$ represents~$1$ in~$\Gr\SSS\RR$ if and only if the second reversing yields the empty word, see Figures~\ref{F:WordPb} and~\ref{F:WordPbBis}.
\end{proof}

\begin{figure}[htb]
$$\begin{picture}(72,28)(0,2)
\put(0.5,0){\includegraphics{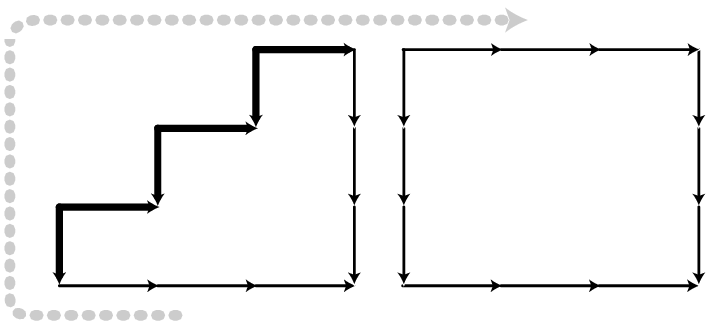}}
\put(18,23){$\sw$}
\put(38,16){$\vv$}
\put(20,1){$\vvv$}
\put(55,31){$\vvv$}
\put(55,1){$\uuu$}
\put(73,16){$\uu$}
\put(24,15){$\revR$}
\put(54,15){$\revR$}
\end{picture}$$
\caption{\sf Solving the word problem of the presented group~$\Gr\SSS\RR$ by a double reversing: starting from~$\sw$, a first reversing leads to~$\vvv \vv\inv$; then copy $\vv\inv$ in front of~$\vvv$ and reverse again: the word~$\sw$ represents~$1$ if and only if the words~$\uu$ and~$\uuu$ are empty.}
\label{F:WordPb}
\end{figure}

\begin{figure}[htb]
$$\begin{picture}(55,27)(0,2)
\put(0.5,0){\includegraphics{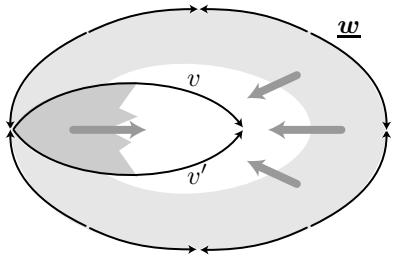}}
\put(45,30){$\sw$}
\put(25,23){$\vv$}
\put(25,10){$\vvv$}
\end{picture}$$
\caption{\sf Corollary~\ref{C:WordPb} viewed as a method for constructing a van Kampen diagram by a double reversing: starting from~$\sw$, the first reversing amounts to filling the space between the outer path~$\sw$ and a positive--negative word~$\vvv \vv\inv$, the second reversing amounts to filling the inner space between~$\vv$ and~$\vvv$.}
\label{F:WordPbBis}
\end{figure}

Restricting to a complemented presentation, and resorting to the counterpart of Proposition~\ref{P:Cancellation} for establishing right-cancellativity, we deduce the result stated as Theorem~\ref{T:Intro3} in the introduction.

\begin{exam}
\label{X:Main6}
Returning once more to the presentation of Example~\ref{X:Preferred}, one easily checks that Proposition~\ref{P:Terminate} applies with $\SSSh = \{\ew, \tta, \ttb, \ttc, \ttd, \tta^2, \tta\ttb, \ttb\tta, \ttb^2\}$. So we deduce that every reversing sequence ends after finitely many steps with a positive--negative word. Thus we obtain a solution for the word problem of the monoid~Ê$\MM$. We saw in Example~\ref{X:Main5} that $\MM$ is left-cancellative. Because of the symmetry of the presentation, $\MM$ is also right-cancellative. So Ore's conditions are satisfied, and we deduce that $\MM$ embeds in a group of fractions, whose word problem can be solved by the double reversing process of Corollary~\ref{C:WordPb}. (It turns out that the involved group of fractions is Artin's $3$-strand braid group~$B_3$.)
\end{exam}

\begin{rema}
\label{R:LD}
If the set~$\SSSh$ involved in Proposition~\ref{P:Terminate} is infinite, which certainly happens if $\SSS$ itself is infinite, then no obvious upper bound exists on the length of the reversing sequences. In~\cite[Chapter~VIII]{Dgd}, there is an example where $\SSS$ is infinite, and the only known upper bound for the length of a reversing sequence starting from~$\ww\inv \www$ where $\ww, \www$ are words of length~$\ell$ (with respect to the alphabet~$\SSS$) is a tower of exponentials whose height is itself exponential in~$\ell$.
\end{rema}

\subsection{Garside structures}
\label{S:Garside}

In the recent years, there has been an increasing interest in a particular class of algebraic structures generically called Garside structures, see for instance~\cite{Bes, Cho, Dfx, Dht, DiM, Kra, McC, Pim}. Several versions exist, but, in this survey, we shall only mention Garside monoids and Garside groups. Our point here is that subword reversing is a useful tool both for recognizing that a presented monoid is a Garside monoid, and for computing in a Garside monoid once one knows it is.

\begin{defi}[\bf Garside] \cite{Dgk}
A monoid~$\MM$ is called \emph{Garside} if it is cancellative, it contains no invertible element except~$1$, any two elements admit a left- and a right-lcm and gcd, and there exists an element~$\Delta$ of~$\MM$ such that the left- and right-divisors of~$\Delta$ coincide, generate~$\MM$, and are finite in number.
\end{defi}

An element~$\Delta$ satisfying the above conditions is called a \emph{Garside element}. By definition, a Garside monoid satisfies Ore's conditions, so it embeds in a group of fractions. A group~$\GG$ is called \emph{Garside} if it is the group of fractions of at least one Garside monoid. The main point about Garside groups and monoids is that the whole structure is fully controlled by the finite lattice consisting of the (left and right) divisors of the Garside element~$\Delta$.

\begin{exam}
\label{X:Braid}
The seminal example of a Garside group is the group~$B_\nn$ of $\nn$-strand braids. For $\nn \ge 2$, the group~$B_\nn$ admits the presentation
\begin{equation}
\label{E:BraidPres}
\bigg\langle \sig1, ..., \sig{n-1} \ \bigg\vert\ 
\begin{matrix}
\sig\ii \sig j = \sig j \sig\ii 
&\text{for} &\vert i-j \vert\ge 2\\
\sig\ii \sig j \sig\ii = \sig j \sig\ii \sig j 
&\text{for} &\vert i-j \vert = 1
\end{matrix}
\ \bigg\rangle.
\end{equation}
A Garside monoid of which $B_\nn$ is a group of fractions is the submonoid~$\BP\nn$ of~$B_\nn$ generated by the elements~$\sig1, ..., \sig\nno$, a Garside element being the so-called fundamental braid $\Delta_\nn$ defined by $\Delta_1 = 1$ and $\Delta_\nn = \Delta_\nno \sig\nno ... \sig2 \sig1$. The lattice of the divisors of~$\Delta_\nn$ in~$\BP\nn$ turns out to be isomorphic to the symmetric group~$\boldsymbol{\mathfrak S}_{\nn}$ equipped with the weak order \cite[Chapter IX]{Eps}---see Figure~\ref{F:Delta}.
\end{exam}

\begin{figure}[htb]
$$\begin{picture}(20,18)
\put(0,0){\includegraphics{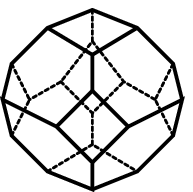}}
\put(9,-3){\small$1$}
\put(8,20){\small$\Delta_4$}
\end{picture}$$
\caption{\sf The $24$-element lattice that controls the Garside structure of the monoid~$\BP4$, topologically a $2$-sphere.}
\label{F:Delta}
\end{figure}

\subsection*{Recognizing Garside structures}

Every Garside monoid (hence every Garside group) admits presentations that are eligible for subword reversing.

\begin{lemm} \cite{Dfx}
\label{L:LcmPresentation}
Every Garside monoid admits a presentation that is complemented and complete with respect to right reversing.
\end{lemm}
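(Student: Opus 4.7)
The plan is to exhibit the natural ``simple element'' presentation of the given Garside monoid~$\MM$ and verify both properties. Let $\Delta$ be a Garside element for~$\MM$, and take $\SSS$ to be the set of nontrivial divisors of~$\Delta$. By the definition of a Garside monoid, $\SSS$ is finite and generates~$\MM$. For each unordered pair $\{s,t\}$ of distinct elements of~$\SSS$, the right-lcm $s\vee t$ exists in~$\MM$ (since $\Delta$ is a common right-multiple of~$s$ and~$t$, and $\MM$ admits local right-lcms), and cancellativity lets us write $s\vee t = su = tv$ uniquely. Crucially, because $s\vee t$ divides~$\Delta$ and, in a Garside monoid, left- and right-divisors of~$\Delta$ coincide, both $u$ and~$v$ belong to $\SSS \cup\{1\}$. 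Insert into $\RR$ the single relation $su = tv$ (any factor equal to~$1$ being simply dropped). By construction, $(\SSS,\RR)$ is complemented: no relation has the form $s\,\cdots = s\,\cdots$, and for any distinct pair $s,t$ there is exactly one relation of the form $s\,\cdots = t\,\cdots$.

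To see that $(\SSS,\RR)$ genuinely presents~$\MM$, it suffices to show that two words in~$\SSS^*$ represent the same element of~$\MM$ if and only if they are $\RR$-equivalent. The nontrivial direction says that every equality of products of simples in~$\MM$ can be derived by a finite chain of elementary lcm moves, which is standard in Garside theory and follows from the lattice structure on the divisors of~$\Delta$ together with cancellativity.

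For completeness, I would invoke Proposition~\ref{P:CubeBis} with $\SSSh = \SSS$. Closure of~$\SSSh$ under the complement operation is immediate from the definition of~$\RR$: for any $s, t\in\SSS$, the complement $s\cp t$ is either empty or a single letter of~$\SSS$. It remains to verify the cube condition on triples from~$\SSS$. Given $s,t,r\in\SSS$, any reversing of $s\inv r\, r\inv t$ to a positive--negative word $\vvv\vv\inv$ produces, by Lemma~\ref{L:Equiv}, an equivalence $s\vvv \eqpR t\vv$, and the common element so obtained must equal the iterated right-lcm $s\vee t\vee r$ in~$\MM$; cancellativity and the uniqueness of lcms then force the associativity identity that closes the cube, whence $(s\vvv)\inv(t\vv) \revR \ew$.

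The main obstacle is this last verification. Although the cube condition is conceptually just the associativity of the right-lcm operation in the divisor lattice of~$\Delta$, carrying it through rigorously requires word-level bookkeeping to align the formal reversing steps with the algebraic lcm computations, with particular care for the degenerate cases where some complement reduces to the empty word.
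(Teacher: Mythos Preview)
Your outline is sound but follows a different route from the paper. The paper takes the \emph{atoms} of~$\MM$ as generating set and, for each pair of atoms~$(\ss,\sss)$, selects one relation $\ss\vvv=\sss\vv$ in which $\ss\vvv$ and~$\sss\vv$ are atom-words representing the right-lcm of~$\ss$ and~$\sss$; completeness is then obtained (in the cited reference) via the homogeneous criterion of Proposition~\ref{P:Cube}, the required pseudo-length coming from atom-length. You instead take \emph{all} nontrivial simples as generators. This buys you single-letter complements, so closure of~$\SSSh$ under~$\cp$ is automatic and Proposition~\ref{P:CubeBis} applies without any homogeneity hypothesis; the price is a much larger presentation. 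Both approaches ultimately reduce completeness to a cube condition that expresses the associativity of the right-lcm in the divisor lattice of~$\Delta$, and both leave that verification as a sketch.

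Two technical remarks. First, your $\SSSh$ must contain the empty word, since $\ss\cp\ss=\ew$; take $\SSSh=\SSS\cup\{\ew\}$. Second, and more importantly, your last inference---from ``$\ss\vvv$ and $t\vv$ represent the same element of~$\MM$'' to ``$(\ss\vvv)\inv(t\vv)\revR\ew$''---is precisely the point that cannot be dispatched by cancellativity and uniqueness of lcms alone, as those are monoid-level facts while the conclusion is word-level. What is actually needed is an induction (say on lattice height in~$\Div(\Delta)$) showing that any two simple-word factorizations of the same simple are connected by the lcm relations; this is indeed standard Garside machinery, but your sketch should invoke that induction rather than cancellativity.
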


\begin{proof}[Proof (sketch)]
Every Garside monoid admits a smallest generating subset, namely the family of its atoms (elements~$\xx$ such that $\xx = \yy \zz$ implies $\yy = 1$ or $\zz = 1$). One obtains a presentation of the expected type by selecting, for each pair of atoms~$(\ss, \sss)$, a relation~$\ss \vvv = \sss \vv$ such that $\ss \vvv$ and $\sss \vv$ represent the right-lcm of~$\ss$ and~$\sss$. (Such a presentation can naturally be called an \emph{lcm-presentation}.)
\end{proof}

It follows from Lemma~\ref{L:LcmPresentation} that, when addressing the question of recognizing Garside monoids from a presentation, it is natural to concentrate on complemented presentations. As for left-cancellation and right-lcm's, the criteria of Propositions~\ref{P:Cancellation} and~\ref{P:Lcm} are relevant, and so are their symmetric counterparts involving the left-reversing procedure of Section~\ref{S:LeftReversing} for right-cancellation and left-lcm's. As for identifying Garside elements, subword reversing still turns out to be suitable. Indeed, when it exists, the least Garside element that is a multiple of the considered generators~$\SSS$ is represented by the longest element in the smallest set of words that includes~$\SSS$ and is closed under the complement and right-lcm operations (Definition~\ref{D:Complement}). We refer to~\cite{Dgk} for details.

\begin{rema}
\label{R:Various}
Recognizing a Garside structure, as well as applying Proposition~\ref{P:Terminate}, or its application Proposition~\ref{T:Intro3}, requires checking a number of conditions. In practice, it may be convenient to work with several presentations of the considered monoid simultaneously, and to appeal to the most convenient one for checking each condition. For instance, in the case of a Garside monoid, a presentation in terms of the atoms is likely to be homogeneous, whereas a presentation in terms of the divisors of a Garside element (``simple elements'') may be more suitable for proving the existence of common multiples. 
\end{rema}

\subsection*{Working in a Garside structure}

The second family of problems consists in investigating a monoid~$\Mon\SSS\RR$ once one knows that this monoid is Garside and that $(\SSS, \RR)$ is a complete complemented presentation. A typical question is to solve the word problem (for the monoid or for the group): here Proposition~\ref{P:Terminate} and Corollary~\ref{C:WordPb} are relevant, since the required hypotheses are necessarily satisfied. Another question is to practically compute the lattice operations associated with the Garside structure, namely the (right)-lcm and the (left)-gcd. As for the right-lcm, Proposition~\ref{P:Lcm} provides a solution, by means of one reversing.  As for the left-gcd, it is easy to show that it can be similarly computed by means of a triple reversing, as will be explained in Corollary~\ref{C:Gcd} below.

Another application is the possibility of using subword reversing to compute the greedy normal form. The latter is a distinguished decomposition of every element into a product of divisors of the considered Garside element. 

\begin{defi}[\bf normal sequence] \cite{ElM, AdjNF, Eps}
Assume that $\MM$ is a Garside monoid with specified Garside element~$\Delta$. A sequence $(\xx_1, ..., \xx_\pp)$ of divisors of~$\Delta$ is called \emph{(right)-normal} if $\xx_1$ is not~$1$ and, for each~$\ii$, the element~$\xx_\ii$ is the maximal divisor of~$\Delta$ that right-divides~$\xx_1 ... \xx_\ii$.\footnote{one says that $\xx$ \emph{right-divides}~$\yy$ if $\yy = \yy' \xx$ holds for some~$\yy'$}
\end{defi}

Every nontrivial element in a Garside monoid admits a unique normal decomposition, a significant result that largely explains the interest in Garside monoids as it entails nice geometric properties for the monoid and the associated group of fractions (automaticity, isoperimetric inequality, ...). Our point here is that subword reversing is closely connected with the computation of the normal form.

\begin{prop} \cite{Dgl}
\label{P:Normal}
Assume that $\Mon\SSS\RR$ is a Garside monoid, that $(\uu_1, ..., \uu_\pp)$ is a sequence of words in~$\SSS^*$ that represents the normal decomposition of some element~$\xx$, and that $\vv$ is a word of~$\SSS^*$ that represents a simple element~$\yy$  left-dividing~$\xx$. Then the normal form for~$\yy\inv \xx$ is represented by the sequence $(\uuu_1, ..., \uuu_\pp)$ inductively determined by $\vv_0 = \vv$ and $\vv_{\ii-1}\inv \uu_\ii \revR \uuu_\ii \vv_\ii\inv$ (see Figure~\ref{F:NormalForm}).
\end{prop}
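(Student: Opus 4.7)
The plan is to verify three things in succession: that the inductive reversing is well-defined with simple outputs, that the process terminates with trivial remainder (\ie, $\vv_\pp$ represents~$1$), and that the resulting sequence $(\uuu_1,\ldots,\uuu_\pp)$ is normal.

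First I would invoke Lemma~\ref{L:LcmPresentation}: the Garside monoid admits a complete complemented lcm-presentation, so for any two simple elements their complements exist and are computed by one step of reversing. Starting from $\vv_0 = \vv$ which represents a simple element, and using that $\uu_1$ represents a simple element, both divide the Garside element~$\Delta$, so they admit a common right-multiple. Completeness then guarantees $\vv_0\inv \uu_1 \revR \uuu_1 \vv_1\inv$, and since complements of simple elements at their right-lcm are again divisors of~$\Delta$, both $\uuu_1$ and $\vv_1$ represent simple elements. Iterating this argument produces simple $\uuu_\ii$ and $\vv_\ii$ for every~$\ii$.

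Second, concatenating the individual reversings pushes~$\vv_0\inv$ to the right one block at a time, yielding $\vv\inv \uu_1 \cdots \uu_\pp \revR \uuu_1 \cdots \uuu_\pp \vv_\pp\inv$. By Lemma~\ref{L:Equiv} this gives $\yy \cdot [\uuu_1 \cdots \uuu_\pp] = \xx \cdot [\vv_\pp]$ in~$\Mon\SSS\RR$, where brackets denote represented elements. Writing $\xx = \yy\zz$ with $\zz = \yy\inv\xx$ and applying the left-cancellativity of the Garside monoid, one obtains $[\uuu_1 \cdots \uuu_\pp] = \zz \cdot [\vv_\pp]$. To force $[\vv_\pp] = 1$, I would argue by induction on~$\pp$: interpreting $[\vv_\ii]$ as the residual part of~$\yy$ that still needs to be absorbed by the remaining factors $\uu_{\ii+1},\ldots,\uu_\pp$, the normality of $(\uu_1,\ldots,\uu_\pp)$ combined with the fact that $\yy$ left-divides the full product $\xx$ guarantees that this residual element is fully consumed at step~$\pp$.

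Third and hardest, normality of $(\uuu_1,\ldots,\uuu_\pp)$ must be checked. I would use the local characterization: $(\uuu_\ii, \uuu_{\ii+1})$ is normal if and only if no atom left-dividing $\uuu_{\ii+1}$ together with $\uuu_\ii$ yields a simple element as a product. Suppose an atom $\ss$ witnesses a failure, \ie, $\ss$ left-divides $\uuu_{\ii+1}$ and $\uuu_\ii \ss$ is simple. Using the reversing squares $\vv_{\ii-1}\inv \uu_\ii \revR \uuu_\ii \vv_\ii\inv$ and $\vv_\ii\inv \uu_{\ii+1} \revR \uuu_{\ii+1} \vv_{\ii+1}\inv$, I would chase $\ss$ backwards through the diagram, translating the left-divisibility of~$\uuu_{\ii+1}$ by~$\ss$ (via the lcm interpretation of reversing and the fact that $\vv_\ii$ is simple) into the left-divisibility of~$\uu_{\ii+1}$ by some associated atom~$\sss$, while the simplicity of~$\uuu_\ii \ss$ lifts to the simplicity of~$\uu_\ii \sss$. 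This would contradict the normality of $(\uu_\ii, \uu_{\ii+1})$. The hard part is this lifting: one has to precisely control how an atom on the $\uuu$-side traverses the commutative square built by reversing and reappears on the $\uu$-side, which will require the explicit correspondence between reversing squares and right-lcms of simple elements provided by the Garside structure.
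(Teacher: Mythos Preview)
The paper does not prove this proposition at all: it is stated with a citation to~\cite{Dgl} and immediately followed by commentary on its use, with no argument given. So there is nothing to compare against directly; I can only assess your outline on its own terms.

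Your three-step plan is the natural one, and step~1 is fine. In step~2, however, you are working harder than necessary. Once you know that reversing computes right-lcm's (Proposition~\ref{P:Lcm}), the concatenated reversing gives that $[\uu_1\cdots\uu_\pp]\cdot[\vv_\pp]$ is the right-lcm of~$\yy$ and~$\xx$; since $\yy$ left-divides~$\xx$, that lcm is~$\xx$ itself, and cancellativity yields $[\vv_\pp]=1$ immediately. No induction on~$\pp$ and no appeal to normality of the~$\uu_\ii$ is needed here.

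Step~3 contains a genuine mismatch. The local criterion you invoke---``no atom~$\ss$ left-divides~$\uuu_{\ii+1}$ with $\uuu_\ii\ss$ simple''---is the characterization of the \emph{left}-greedy normal form, in which each factor is the maximal simple \emph{left}-dividing the tail. The paper's Definition of (right-)normal requires instead that $\xx_\ii$ be the maximal simple \emph{right}-dividing $\xx_1\cdots\xx_\ii$; the corresponding local obstruction is an atom~$\ss$ right-dividing~$\uuu_\ii$ with $\ss\,\uuu_{\ii+1}$ simple. Your atom-chase, as written, would lift a left-greedy failure for $(\uuu_\ii,\uuu_{\ii+1})$ to a left-greedy failure for $(\uu_\ii,\uu_{\ii+1})$, which does not contradict the right-normality hypothesis you actually have. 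The overall strategy---translate a local obstruction through the reversing square using the lcm interpretation---is sound, but the direction of the divisibility conditions must be reversed to match the paper's convention. Once corrected, the lifting works because in the square $[\vv_{\ii-1}][\uuu_\ii]=[\uu_\ii][\vv_\ii]$ the element~$[\vv_\ii]$ is precisely the complement $[\uu_\ii]\backslash[\vv_{\ii-1}]$, and right-divisors of~$[\uuu_\ii]$ correspond, through this square, to right-divisors of~$[\uu_\ii]$ in the required way.
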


\begin{figure}[htb]
\begin{picture}(63,10)(0,0)
\put(0,0){\includegraphics{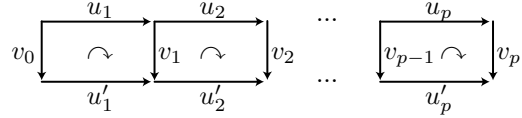}}
\put(7,10){$\uu_1$}
\put(22,10){$\uu_2$}
\put(52,10){$\uu_\pp$}
\put(7,-2){$\uuu_1$}
\put(22,-2){$\uuu_2$}
\put(52,-2){$\uuu_\pp$}
\put(-3,4){$\vv_0$}
\put(16.5,4){$\vv_1$}
\put(31.5,4){$\vv_2$}
\put(46.5,4){$\vv_{\pp-1}$}
\put(61.5,4){$\vv_\pp$}
\put(37.5,1){...}
\put(37.5,9){...}
\put(7,4){$\rev$}
\put(22,4){$\rev$}
\put(54,4){$\rev$}
\end{picture}
\caption{\sf Computation of the normal form by a sequence of reversings.}
\label{F:NormalForm}
\end{figure}

So computing the normal form of~$\yy\inv \xx$ from that of~$\xx$ reduces to a sequence of reversings. Computing the normal form of a product easily follows, as multiplying by~$\yy$ amounts to multiplying by~$\Delta$ and dividing by~$\yy\inv \Delta$, where $\Delta$ is a fixed Garside element. As multiplying by~$\Delta$ is easy, Proposition~\ref{P:Normal} is really the point for computing a normal form. 

A further application of Proposition~\ref{P:Normal} is the computation of  the homology of a Garside monoid~$\MM$ (and of its group of fractions) by using reversing to construct an effective resolution of~$\ZZZZ$ by free $\ZZZZ\MM$-modules---see~\cite{Dgl}, as well as the related papers~\cite{Squ} and~\cite{CMW}.

\subsection{Fractionary decompositions}
\label{S:DoubleReversing}

The next result combines right- and left-reversings to obtain short fractionary decompositions in a group of fractions. 

If $(\SSS, \RR)$ is a presentation which is complemented and such that every reversing sequence is finite, then, for each word~$\sw$ in the alphabet~$\SSS \cup \SSS\inv$, there exist two unique positive words~$\NR(\sw)$ and~$\DR(\sw)$---like ``right-numerator'' and ``right-denominator''---such that $\sw$ reverses to~$\NR(\sw) \DR(\sw)\inv$. These functions make sense at the level of words, but, even when the completeness condition is satisfied, they induce no well defined functions at the level of the presented group~$\Gr\SSS\RR$: if $\sw, \sww$ represent the same element of~$\Gr\SSS\RR$, it need not be the case that $\NR(\sw)$ and~$\NR(\sww)$ are equal, or even equivalent. For instance, if $\ss$ is a letter of~$\SSS$, the words~$\ss \ss\inv$ and~$\ew$ both represent~$1$, but we have $\NR(\ss \ss\inv) = \ss$ and $\NR(\ew) = \ew$, and, in general, $\ss$ does not represent~$1$. This unpleasant phenomenon disappears when both left- and right-reversings are combined.

\begin{prop} [\cite{Dff} in the particular case of braids]
\label{P:Double}
Assume that $(\SSS, \RR)$ is a presentation that is left- and right-complemented, left- and right-complete, and, moreover, such that every left- or right-reversing sequence is finite. Let $\MM = \Mon\SSS\RR$ and $\GG = \Gr\SSS\RR$.

$(i)$ For~$\sw$ a signed word in the alphabet~$\SSS \cup \SSS\inv$, let $\NRL(\sw)$ and~$\DRL(\sw)$ denote the positive words such that $\NR(\sw) \DR(\sw)\inv$ left-reverses to~$\DRL(\sw)\inv \NRL(\sw)$. Then $\NRL$ and $\DRL$ induce well defined mappings of~$\GG$ to~$\MM$. 

$(ii)$ For each signed word~$\sw$ representing in~$\GG$ a fraction~$\xx\inv \xxx$ with $\xx, \xxx$ in~$\MM$, the class of~$\DRL(\sw)$ in~$\MM$ left-divides~$\xx$ and the class of~$\NRL(\sw)$ in~$\MM$ left-divides~$\xxx$.
\end{prop}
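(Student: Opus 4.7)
The strategy is to recognize the double reversing $\sw \revR \NR(\sw)\DR(\sw)\inv \revl \DRL(\sw)\inv\NRL(\sw)$ as producing a canonical left-fraction decomposition of the group element $[\sw]_\GG$, and to derive both (i) and (ii) from uniqueness and minimality properties of this canonical form. The hypotheses first put us in a Garside-like framework: Proposition~\ref{P:Cancellation} together with its left-reversing dual gives left- and right-cancellativity of~$\MM$; Proposition~\ref{P:Lcm} and its dual give local right- and left-lcm's in~$\MM$; and the finiteness of every reversing sequence combined with Lemma~\ref{L:Common} ensures that any two elements of~$\MM$ admit common multiples on both sides. Ore's conditions therefore hold, so~$\MM$ embeds in~$\GG$.

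By iteratively applying Lemma~\ref{L:Equiv} and its left-reversing analog, each elementary reversing step preserves the element represented in~$\GG$; hence $[\sw]_\GG = [\DRL(\sw)]\inv[\NRL(\sw)]$ in~$\GG$ for every signed word~$\sw$. The central claim, from which both (i) and (ii) follow, is that the pair $([\DRL(\sw)], [\NRL(\sw)])$ is \emph{left-coprime} in~$\MM$, \ie\ its left-gcd is~$1$. I expect this coprimality to be the main obstacle. My plan is to follow the left-reversing diagram filling the region bounded by $\NR(\sw)\DR(\sw)\inv$ on one side and $\DRL(\sw)\inv\NRL(\sw)$ on the other: each tile closes a positive--negative pattern $\sss\ss\inv$ by the unique relation $\vv\sss = \vvv\ss$ supplied by the (left-)complemented structure, in which $\vv\sss$ represents the least common left-multiple of $\ss$ and~$\sss$; minimality of this lcm forces the elementary cofactor pair $(\vv, \vvv)$ to share no nontrivial left-divisor, and an induction on the tiles propagates left-coprimality from these elementary pairs to the outer boundary pair $(\DRL(\sw), \NRL(\sw))$.

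Granted the coprimality, (i) is uniqueness of coprime left-fractions: if $\mm_1\inv\nn_1 = \mm_2\inv\nn_2$ in~$\GG$ with both pairs left-coprime, then $\mm_1 = \mm_2$ and $\nn_1 = \nn_2$ in~$\MM$. Standard fractionary calculus applies: Ore furnishes $\aa, \bb \in \MM$ with $\aa\mm_1 = \bb\mm_2$ and $\aa\nn_1 = \bb\nn_2$, and the lattice identity $\aa \cdot \gcd_L(\mm_1, \nn_1) = \gcd_L(\aa\mm_1, \aa\nn_1) = \gcd_L(\bb\mm_2, \bb\nn_2) = \bb \cdot \gcd_L(\mm_2, \nn_2)$, combined with coprimality of both pairs and left-cancellativity, forces $\aa = \bb$, whence $\mm_1 = \mm_2$ and $\nn_1 = \nn_2$. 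Thus $\NRL$ and $\DRL$ descend to well-defined maps $\GG \to \MM$.

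For (ii), let $\sw$ represent $\xx\inv\xxx$ with $\xx, \xxx \in \MM$. From $\xx\inv\xxx = [\DRL(\sw)]\inv[\NRL(\sw)]$ in~$\GG$, Ore furnishes $\aa, \bb \in \MM$ with $\aa\xx = \bb[\DRL(\sw)]$ and $\aa\xxx = \bb[\NRL(\sw)]$. The same gcd identity gives $\aa \cdot \gcd_L(\xx, \xxx) = \bb \cdot \gcd_L([\DRL(\sw)], [\NRL(\sw)]) = \bb$ by coprimality of the canonical pair, so $\aa$ left-divides~$\bb$; writing $\bb = \aa\cc$ and cancelling~$\aa$ on the left in the two Ore equations yields $\xx = \cc[\DRL(\sw)]$ and $\xxx = \cc[\NRL(\sw)]$, which expresses the asserted divisibility of~$[\DRL(\sw)]$ inside~$\xx$ and~$[\NRL(\sw)]$ inside~$\xxx$.
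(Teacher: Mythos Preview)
Your approach is correct and genuinely different from the paper's. You work semantically in~$\MM$: the pivot is that $([\DRL(\sw)],[\NRL(\sw)])$ is left-coprime, from which (i) is uniqueness of coprime left-fractions and (ii) is their minimality, both handled by lcm/gcd calculus. The paper instead stays at the word level. For~(i) it introduces the relation ``there exist $\vv,\vvv\in\SSS^*$ with $\NR(\sw)\vv\eqpR\NR(\sww)\vvv$ and $\DR(\sw)\vv\eqpR\DR(\sww)\vvv$'', shows it is a congruence containing~$\RR$ (hence holds whenever $\sw,\sww$ represent the same group element), and then uses that $\NL$ is compatible with~$\eqpR$---the left analog of Proposition~\ref{P:Complement}---to conclude $\NRL(\sw)\eqpR\NRL(\sww)$ directly, with no mention of coprimality. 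For~(ii) it simply observes that $\uu\,\NR(\sw)\eqpR\uuu\,\DR(\sw)$ is a common left-multiple of $[\NR(\sw)]$ and~$[\DR(\sw)]$, so left-completeness factors it through the left-reversing output. Your route makes the minimality of the double-reversed fraction explicit; the paper's is shorter and never needs gcd's to exist (you invoke $a\cdot\gcd_L(p,q)=\gcd_L(ap,aq)$ without saying why the gcd's are defined---they are under these hypotheses, but a word is owed). One simplification on your side: the tile-by-tile induction for coprimality is unnecessary, since the left dual of Proposition~\ref{P:Lcm} already says that left-reversing $\NR(\sw)\DR(\sw)\inv$ produces the left-lcm cofactors of $[\NR(\sw)]$ and~$[\DR(\sw)]$, and lcm cofactors in a cancellative monoid with no nontrivial invertibles are automatically left-coprime.
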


Under the above hypotheses, every element of the group~$\GG$ is a fraction~$\xx\inv \xxx$ with $\xx, \xxx$ in~$\MM$. As $\DRL(\sw)\inv \NRL(\sw)$ is the result of reversing~$\sw$ to the right, and then the result to the left, what Proposition~\ref{P:Double} says is that a double reversing process leads to a fractionary decomposition which is minimal among all possible fractionary decompositions in~$\GG$.

\begin{proof}[Proof (sketch)]
If two signed words $\sw, \sww$ in the alphabet~$\SSS \cup \SSS\inv$ satisfy
\begin{equation}
\label{E:Fraction}
\exists \vv, \vvv \in \SSS^*\  \big(\ \NR(\sw) \vv \eqpR \NR(\sww) \vvv
\mbox{\quad and \quad}
\DR(\sw) \vv \eqpR \DR(\sww) \vvv\  \big),
\end{equation}
then, clearly, $\sw$ and $\sww$ represent the same element of the group~$\GG$. Conversely, the relation defined by~\eqref{E:Fraction} can be proved to be an equivalence relation on~$(\SSS \cup \SSS\inv)^*$ that is compatible with left- and right-multiplication. As it includes~$\RR$, it must include the congruence generated by~$\RR$ and, therefore, any two signed words~$\sw, \sww$ representing the same element of~$\GG$ satisfy~\eqref{E:Fraction} (See \cite[Proposition~7.3]{Dgp} for a general form of this result.)

Assume that $\sw, \sww$ represent the same element of~$\GG$. Then \eqref{E:Fraction} holds. Now, if $\NR(\sw) \vv \eqpR \NR(\sww) \vvv$ and $\DR(\sw) \vv \eqpR \DR(\sww) \vvv$ hold, then, by definition of left-reversing, and using $\NL$ for the left-numerator, the counterpart of the right-numerator involving left-reversing, we have the positive word equivalences
\begin{multline*}
\NRL(\sw) 
= \NL(\NR(\sw) \DR(\sw)\inv)
= \NL(\NR(\sw) \vv \vv\inv \DR(\sw)\inv)\\
\eqpR \NL(\NR(\sww) \vvv {\vvv}\inv \DR(\sww)\inv)
= \NL(\NR(\sww) \DR(\sww)\inv)
=\NRL(\sww),
\end{multline*}
and a similar relation for denominators. This proves~$(i)$.

For~$(ii)$, assume that $\xx, \xxx$ lie in~$\MM$ and the signed word~$\sw$ represents~$\xx\inv \xxx$. Let $\uu, \uuu$ be words in the alphabet~$\SSS$ that represent~$\xx$ and~$\xxx$, respectively. Then $\uu\inv \uuu$ and $\NR(\sw) \DR(\sw)\inv$ both represent~$\xx\inv \xxx$, hence $\uu \NR(\sw)$ and $\uuu \DR(\sw)$ represent the same element of~$\GG$. The hypotheses imply that $\MM$ embeds in~$\GG$, so that $\uu \NR(\sw)$ and $\uuu \DR(\sw)$ also represent the same element of~$\MM$. In other words, we have $\uu \NR(\sw) \eqpR \uuu \DR(\sw)$. By definition of left-completeness, this implies that $\uu$ is a left-multiple of~$\DL(\NR(\sw) \DR(\sw)\inv)$, \ie, of~$\DRL(\sw)$, and that $\uuu$ is a left-multiple (with the same quotient) of~$\NL(\NR(\sw) \DR(\sw)\inv)$, \ie, of~$\NRL(\sw)$.
\end{proof}

\begin{coro}
\label{C:Gcd}
Under the assumptions of Proposition~\ref{P:Double}, the left-gcd of two elements~$\xx, \xxx$ of~$\MM$ represented by two words~$\uu, \uu'$ is determined by the following algorithm: right-reverse $\uu\inv \uuu$ to~$\vvv \vv\inv$, then left-reverse~$\vvv \vv\inv$ to~$\ww\inv \www$; finally, left-reverse $\uu \ww\inv$ to~$\ww_*\inv \uu_*$. Then $\ww_*$ must be empty, and $\uu_*$ represents the left-gcd of~$\xx$ and~$\xxx$.
\end{coro}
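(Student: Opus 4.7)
The strategy is to extract a distinguished common left-divisor $G$ from the first two reversings, argue that $G$ equals the left-gcd, and then observe that the third reversing merely exposes $G$.

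First, I invoke the argument of Proposition~\ref{P:Double}(ii) applied to $\sw = \uu\inv\uuu$: with $\NR(\sw)\DR(\sw)\inv = \vvv\,\vv\inv$ and $\DRL(\sw)\inv\NRL(\sw) = \ww\inv\,\www$, the use of left-completeness in that proof produces a single element $G \in \MM$ with $\uu \eqpR G\,\ww$ and $\uuu \eqpR G\,\www$ (the ``same-quotient'' refinement of the statement). Hence $G$ is a common left-divisor of $\xx$ and $\xxx$, and the signed word $\uu\,\ww\inv$ represents $G$ in~$\GG$, so it represents an element of~$\MM$.

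Next, I would show that $G$ is maximal. Let $\beta \in \MM$ be any common left-divisor, so $\xx = \beta\,\alpha$ and $\xxx = \beta\,\alpha'$ for some $\alpha, \alpha' \in \MM$; pick words $\uu_0, \uu_0'$ representing $\alpha, \alpha'$. The signed word $\uu_0\inv\,\uu_0'$ represents the same element of~$\GG$ as $\sw$, so by Proposition~\ref{P:Double}(i) its double-reversing yields words $\eqpR$-equivalent to $\ww$ and $\www$. The same-quotient statement applied to $\uu_0\inv\,\uu_0'$ provides $G' \in \MM$ with $\alpha = G'\,[\ww]$; combined with $\xx = \beta\,\alpha = G\,[\ww]$ and the right-cancellativity of~$\MM$ (a standard consequence of the hypotheses of Proposition~\ref{P:Double}), this forces $G = \beta G'$, so $\beta$ left-divides~$G$, and $G$ is the left-gcd.

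Finally, $\uu\,\ww\inv$ is already in positive-negative form, so no right-reversing step applies; the algorithm's left-reversing thus directly computes $\ww_*\inv\uu_* = \DRL(\uu\,\ww\inv)\inv\NRL(\uu\,\ww\inv)$. Because this signed word represents $G \in \MM$, it represents the same element of~$\GG$ as any positive word~$g$ for~$G$, whose trivial double-reversing yields $\DRL(g) = \ew$ and $\NRL(g) = g$; Proposition~\ref{P:Double}(i) then forces $[\ww_*] = 1$ and $[\uu_*] = G$. Since the relations of the presentation involve only nonempty words, the monoid~$\MM$ has no nontrivial invertibles, which turns $[\ww_*] = 1$ into $\ww_* = \ew$; and then $\uu_*$ represents $G$, the left-gcd. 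The main obstacle will be making explicit the ``same-quotient'' refinement of Proposition~\ref{P:Double}(ii), which is implicit in its proof sketch but not spelled out in the statement; once that refinement is in hand, everything else reduces to invoking Proposition~\ref{P:Double}(i), right-cancellativity, and the absence of nontrivial invertibles in~$\MM$.
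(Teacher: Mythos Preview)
Your argument is correct and follows essentially the same route as the paper's own proof. The only cosmetic difference is in how the maximality of~$G$ is packaged: the paper phrases it as ``the classes of~$\ww$ and~$\www$ admit no nontrivial common left-divisor'' and then says the gcd is~$[\uu\ww\inv]$, whereas you argue directly that any common left-divisor~$\beta$ satisfies $G = \beta G'$. Both versions rest on exactly the same ingredients you identified---the same-quotient refinement of Proposition~\ref{P:Double}(ii), the well-definedness from Proposition~\ref{P:Double}(i), and right-cancellativity (which, as you note, follows from left-completeness and left-complementedness via the mirror of Proposition~\ref{P:Cancellation}). Your treatment of the third reversing, invoking Proposition~\ref{P:Double}(i) once more to force $[\ww_*]=1$ and then using that only~$\ew$ represents~$1$, is slightly more explicit than the paper's ``by construction'', but again the content is the same.
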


\begin{proof}
With the notation of Proposition~\ref{P:Double}, we have $\ww = \DRL(\uu\inv \uuu)$ and $\www = \NRL(\uu\inv \uuu)$. Proposition~\ref{P:Double} says that, in the monoid~$\MM$, the class of~$\uu$ is a left-multiple of the class of~$\ww$, the class of~$\uuu$ is a left-multiple of the class of~$\www$, and that the classes of~$\ww$ and~$\www$ admit no nontrivial common left-divisor. It follows that the left-gcd of the classes of~$\uu$ and~$\uuu$ is the class of~$\uu \ww\inv$ (and of $\uuu \www{}\inv$). By construction, this is the class of~$\uu_*$.
\end{proof}

Always in the context of Proposition~\ref{P:Double}, we have \emph{two} different ways of solving the word problem of the presented monoid~Ê$\Mon\SSS\RR$, one using right-reversing, and one using left-reversing. Indeed, for all words~$\ww, \www$ of~$\SSS^*$, we have 
\begin{equation}
\label{E:WordProblemDouble}
\ww \eqpR \www 
\quad\Leftrightarrow\quad 
\ww\inv \www \revR \ew 
\quad\Leftrightarrow\quad 
\www \ww\inv \revlR \ew.
\end{equation}
Associated with these two options are two\footnote{actually four as we may also begin with left-reversing} methods for solving the word problem of the presented group~$\Gr\SSS\RR$: having to consider a signed word~$\sw$, we first right-reverse it to~$\NR(\sw) \DR(\sw)\inv$ but, then, in Proposition~\ref{P:Terminate}, we switch the factors and right-reverse $\DR(\sw)\inv \NR(\sw)$, whereas, in Proposition~\ref{P:Double}, we keep the word and left-reverse it. In the context of Figure~\ref{F:WordPbBis}, the two methods correspond to filling the small inner domain from left to right, or from right to left. In both cases, the criterion is that $\sw$ represents~$1$ if and only if the final word is empty. But the involved final words need not be the same, although they always represent conjugate elements. This leads to the following

\begin{ques}
Assume that $(\SSS, \RR)$ is a complete complemented semigroup presentation. Define $\Phi : \SSS^* \times \SSS^* \to \SSS^* \times \SSS^*$ by $\Phi(\uu, \vv) = (\DR(\uu\inv \vv), \NR(\uu\inv \vv))$. Is every $\Phi$-orbit necessarily finite?
\end{ques}

Experiments suggest a positive answer, at least in the case of braids. If true, this property might be connected with the specific properties of braid conjugacy~\cite{FrG, Geb, GeG}.

\begin{rema}
We already insisted that the left-reversing relation~$\revl$ is not the inverse of the right-reversing relation~$\rev$. However, it may happen that, starting from a negative--positive word~$\uu\inv \uuu$, right-reversing leads to a positive--negative word~$\vvv \vv\inv$, from which left-reversing leads back to the initial word~$\uu\inv \uuu$. But, even in the case of such reversible reversings, it need not be true that every word that can be reached from~$\uu\inv \uuu$ by right-reversing can be reached from~$\vvv \vv\inv$ by left-reversing, and {\it vice versa}. Here is a counter-example in the braid group~$B_4$. We have
$$\siginv3 \siginv1 \sig2 \sig3 \rev \sig2 \sig3 \sig1 \sig2 \siginv1 \siginv3 \siginv2 \siginv1 \revl \siginv3 \siginv1 \sig2 \sig3.$$
Now, $\siginv3 \sig2 \sig3 \sig1 \sig2 \siginv3 \siginv2 \siginv1$ can be reached from $\sig2 \sig3 \sig1 \sig2 \siginv1 \siginv3 \siginv2 \siginv1$ using~$\revl$, but it cannot be reached from $\siginv3 \siginv1 \sig2 \sig3$ using~$\rev$.
\end{rema}

\subsection{An embeddability criterion}
\label{S:Mixed}

We conclude the section with a (partial) criterion guaranteeing that a monoid~$\Mon\SS\RR$ possibly embeds in its enveloping group~$\Gr\SS\RR$. Again we resort to a combination of right- and left-reversings.

If a semigroup presentation~$(\SSS, \RR)$ satisfies the assumptions of Proposition~\ref{P:Double}, a signed word~$\sw$ represents~$1$ in the associated group if and only if a double reversing from~$\sw$, namely a right-reversing followed by a left-reversing, leads to the empty word. A fortiori, if $\revdR$ denotes the transitive closure of the union of the relations~$\revR$ and~$\revlR$, \begin{equation}
\label{E:Reducing}
\mbox{A signed word~$\sw$ represents~$1$ in~$\Gr\SSS\RR$ if and only if $\sw \revdR \ew$ holds.}
\end{equation}
On the other hand, for the trivial presentation of a free group, \eqref{E:Reducing} holds as well, since, in this case, the relation~$\revd$ is just the standard free group reduction. We may thus wonder whether \eqref{E:Reducing} holds for more general families of presentations, typically for all Artin--Tits presentations. It is easy to see that this is not the case.

\begin{exam}
\label{X:CounterArtin}
Let $\GG$ be the right-angled Artin--Tits group defined by
\begin{equation}
\label{E:CounterArtin}
\GG = \Gr{\tta, \ttb, \ttc, \ttd}{\tta \ttc = \ttc \tta, \ttb \ttc = \ttc \ttb, \tta \ttd = \ttd \tta, \ttb \ttd = \ttd \ttb}.
\end{equation}
So $\GG$ is a direct product of two free groups of rank~$2$. As mentioned in Example~Ê\ref{X:ArtinCube}, the presentation of~\eqref{E:CounterArtin} is right- and left-complete, the associated monoid is cancellative, it embeds in the group~$\GG$ \cite{Par}, and satisfies lots of regularity properties. Let $\sw = \ttA \ttC \ttd \tta \ttB \ttD \ttc \ttb$, \ie, $\tta\inv \ttc\inv \ttd \tta \ttb\inv \ttd\inv \ttc \ttb$. As shown in Figure~\ref{F:Counter} (left), $\sw$ represents~$1$ in~$\GG$, but $\sw \revd \ew$ is false, as $\sw$ is eligible for no right- or left-reversing associated with the relations of~\eqref{E:CounterArtin}. 
\end{exam}

A more interesting relation appears when, in addition to right- and left-reversing, we also allow applying the semigroup relations and their inverses.

\begin{defi}[\bf mixed reversing]
For $(\SSS, \RR)$ a semigroup presentation, the \emph{mixed reversing} relation~$\revmR$ is  the transitive closure of the union of~$\revR$, $\revlR$, $\RR$, and~$\RR\inv$.
\end{defi}

Thus, a signed word~$\sww$ is obtained from another signed word~$\sw$ by one step of mixed reversing if we have $\sw = \su \, \sv \, \suu$ and $\sww = \su \, \svv \, \suu$ with

- either $\sv = \ss\inv \sss$ and $\svv = \vvv \vv\inv$ for some relation $\ss \vvv = \sss \vv$ of~$\RR$,

- or $\sv = \sss \ss\inv$ and $\svv = \vv\inv \vvv$ for some relation $\vvv \ss = \vv \sss$ of~$\RR$,

- or $\sv = \svv$ is a relation of~$\RR$, 

- or we have $\sv = \vv\inv$ and $\svv = \vvv{}\inv$ for some relation $\vv = \vvv$ of~$\RR$. 

\noindent By construction, the relation~$\revmR$ is included in the congruence generated by~$\RR$: if $\sw \revmR \sww$ holds, then $\sw$ and~$\sww$ represent the same element of the group~$\Gr\SSS\RR$. 

The mixed reversing relation naturally occurs in the case of braids when investigating the handle reduction of~\cite{Dfo}. In general, the relation~$\revmR$ properly includes the double reversing relation~$\revdR$. For instance, with respect to mixed reversing, the word~$\sw$ of Example~\ref{X:CounterArtin} reduces to the empty word:
\begin{multline*}
\ttA \ttC \ttd \tta \ttB \ttD \ttc \ttb
\revmR  \ttC \ttA \ttd \tta \ttB \ttD \ttc \ttb
\revmR  \ttC \ttA \tta \ttd \ttB \ttD \ttc \ttb
\revmR  \ttC \ttd \ttB \ttD \ttc \ttb\\
\revmR  \ttC \ttd \ttD \ttB \ttc \ttb
\revmR  \ttC \ttd \ttD \ttB \ttb \ttc
\revmR  \ttC \ttd \ttD \ttc
\revmR  \ttC \ttc
\revmR \ew.
\end{multline*}

We are thus led to considering the condition
\begin{equation}
\label{E:ReducingBis}
\mbox{A signed word~$\sw$ represents~$1$ in~$\Gr\SSS\RR$ if and only if $\sw \revmR \ew$ holds.}
\end{equation}
Saying that \eqref{E:ReducingBis} is valid may be seen as claiming the existence of a weak form of Dehn's algorithm \cite{LyS} inasmuch as it means that, if a signed word~$\sw$ represents~$1$, then it can be transformed into the empty word without introducing any pair~$\ss\inv \ss$ or~$\ss\inv \ss$. Geometrically, \eqref{E:ReducingBis} means that, if $\sw$ represents~$1$, then there exists a van Kampen diagram whose boundary is labeled~$\sw$ and which contains (at least) one tile containing two adjacent letters of~$\sw$ with opposite signs, or adjacent letters of~$\sw$ forming one half of a relation of~$\RR$ (Figure~\ref{F:Mixed}).

\begin{figure}[htb]
\begin{picture}(110,22)(0,1)
\put(-1,-1){\includegraphics{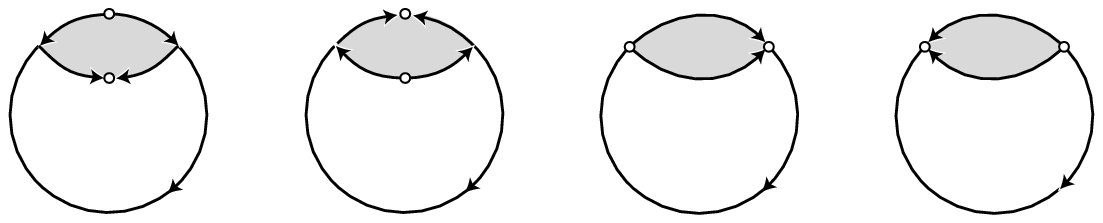}}
\put(19,2){$\sw$}
\put(49,2){$\sw$}
\put(79,2){$\sw$}
\put(109,2){$\sw$}
\put(8,17){$\revR$}
\put(1,18){$\vv$}
\put(17.5,18){$\vvv$}
\put(38,16){$\revlR$}
\put(31,18){$\vv$}
\put(47.5,18){$\vvv$}
\put(68,16){$\eqpR$}
\put(69,21){$\vv$}
\put(69,10.5){$\vvv$}
\put(98,16){$\eqmR$}
\put(99,21){$\vv$}
\put(99,10.5){$\vvv$}
\end{picture}
\caption{\sf Saying that $\sw \revmR \ew$ holds means that there is a van Kampen diagram with boundary~$\sw$ that contains (at least) one tile $\vv = \vvv$ such that $\sw$ contains the middle two letters of~$\vv\inv \vvv$ (\resp the middle two letters of~$\vv \vvv{}\inv$, \resp all of~$\vv$, \resp all of~$\vv\inv$).}
\label{F:Mixed}
\end{figure}

We observed above that the hypothesis that the equivalence~\eqref{E:Reducing} need not be true\footnote{nor does either the naive version, relations of~$\RR$ plus free group reduction: in the free Abelian group generated by~$\tta, \ttb, \ttc$, the word $\tta \ttB \ttc \ttA \ttb \ttC$ represents~$1$ but it is eligible neither for a positive commutation relation, nor for a free group reduction}. It is easy to see that \eqref{E:ReducingBis} may also fail.

\begin{exam}
\label{X:Counter}
Consider
\begin{equation}
\label{E:Counter}
\GG = \Gr{\tta, \ttb, \ttc, \ttd, \tte, \ttf}{\tta \ttc = \ttc \tta \tte, \ttb \ttc = \ttc \ttb \tte, \tta \ttd = \ttd \tta \ttf, \ttb \ttd = \ttd \ttb \ttf}.
\end{equation}
In other words, $\GG$ admits the presentation $\Gr{\tta, \ttb, \ttc, \ttd}{[\tta, \ttc] =[\ttb, \ttc], [\tta, \ttd] = [\ttb, \ttd]}$. As in Example~\ref{X:CounterArtin}, put $\sw = \ttA \ttC \ttd \tta \ttB \ttD \ttc \ttb$. As shown in Figure~\ref{F:Counter} (right), $\sw$ represents~$1$ in~$\GG$, but $\sw \revm \ew$ is false, as $\sw$ is eligible neither for a right- or left-reversing, nor for a positive or negative relation. It can be checked that the presentation~\eqref{E:Counter} is complete. 
\end{exam}

\begin{figure}[htb]
\begin{picture}(90,24)(0,-1)
\put(-1,-1){\includegraphics{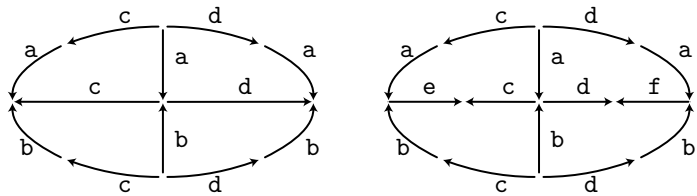}}
\put(21.5,15){$\tta$}
\put(21.5,4){$\ttb$}
\put(10,11){$\ttc$}
\put(1.5,16){$\tta$}
\put(14,-2){$\ttc$}
\put(1,3){$\ttb$}
\put(14,20.5){$\ttc$}
\put(30,11){$\ttd$}
\put(38.5,16){$\tta$}
\put(26,-2){$\ttd$}
\put(39,3){$\ttb$}
\put(26,20.5){$\ttd$}

\put(71.5,15){$\tta$}
\put(71.5,4){$\ttb$}
\put(54.5,11){$\tte$}
\put(65,11){$\ttc$}
\put(51.5,16){$\tta$}
\put(64,-2){$\ttc$}
\put(51,3){$\ttb$}
\put(64,20.5){$\ttc$}
\put(75,11){$\ttd$}
\put(84.5,11){$\ttf$}
\put(88.5,16){$\tta$}
\put(76,-2){$\ttd$}
\put(89,3){$\ttb$}
\put(76,20.5){$\ttd$}
\end{picture}
\caption{\sf Two van Kampen diagrams showing that the word $\ttA \ttC \ttd \tta \ttB \ttD \ttc \ttb$ represents~$1$ in the group of Example~\ref{X:CounterArtin} (left) and in that of Example~\ref{X:Counter} (right): the latter diagram contains no face of the form considered in Figure~\ref{F:Mixed}.}
\label{F:Counter}
\end{figure}

However, it seems difficult to construct examples of the above type with Artin--Tits presentations, because each Artin relation $\ss \sss \ss ... = \sss \ss \sss ...$ is fully determined by any pair of adjacent letters. This makes the 
following conjecture plausible.

\begin{conj}
\label{C:Artin}
Every Artin--Tits presentation~$(\SSS, \RR)$ satisfies~\eqref{E:ReducingBis}.
\end{conj}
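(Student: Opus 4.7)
We prove the nontrivial direction, namely that $\sw \revmR \ew$ whenever $\sw$ represents~$1$ in the Artin--Tits group $\Gr\SSS\RR$; the converse is immediate since $\revmR$ is contained in the congruence generated by~$\RR$. The natural strategy is induction on the area of a minimal $(\SSS,\RR)$-van Kampen diagram $\KKK$ for~$\sw$. When the area is~$0$, the diagram is a tree and $\sw$ freely reduces to~$\ew$; every free reduction step $\ss\inv \ss \to \ew$ (\resp $\ss\ss\inv \to \ew$) is an instance of right-reversing (\resp left-reversing) via the degenerate relation $\ss = \ss$, hence of mixed reversing.

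For the inductive step, the plan is to locate a face~$F$ on the boundary of $\KKK$ and to exhibit a mixed reversing move $\sw \revmR \sww$ such that $\sww$ admits a van Kampen diagram of strictly smaller area. The feature of Artin--Tits presentations I would exploit is that each 2-cell is a $2m_{\ss,\sss}$-gon whose boundary alternates between two fixed letters~$\ss,\sss$, so that any two adjacent edges of $\partial F$ determine the relation carried by~$F$. Given a connected arc $\alpha = \partial F \cap \partial \KKK$ of length at least two, I would distinguish two favorable situations. First, when $\alpha$ crosses a corner of~$F$, i.e.\ a vertex of~$\partial F$ where one side of the Artin relation meets the other, the two edges adjacent to this corner carry opposite signs when read off from~$\sw$, giving a negative--positive or positive--negative pattern to which right- or left-reversing applies. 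Second, when $\alpha$ covers an entire side of the relation, that side occurs as a positive or negative subword of~$\sw$ and can be substituted by the opposite side using a direct application of~$\RR$ or~$\RR\inv$. In either case, the resulting move corresponds to peeling~$F$ off from~$\KKK$ and rewriting the boundary, yielding a smaller van Kampen diagram for the word~$\sww$, which still represents~$1$, and induction concludes.

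The main obstacle is exactly the case in which every boundary face~$F$ meets~$\partial\KKK$ only in sub-arcs lying strictly inside a single side of $F$'s relation (length $1 \le |\alpha| < m_{\ss,\sss}$ with uniform sign in~$\sw$): then $\sw$ exhibits only an intermediate piece of an Artin word with no sign change and no full relation visible, and none of the four mixed reversing moves applies locally. Ruling out this pathological configuration is where the specificity of Artin--Tits presentations must be used seriously---for instance by choosing $\KKK$ so as to minimize, in addition to area, a secondary invariant counting short boundary arcs, or by performing local modifications (``diamond moves'') that push such short-arc faces into the interior while preserving minimality. Carrying out this combinatorial reduction appears to be the genuine difficulty, and is presumably why the statement has been left as a conjecture even though subword reversing is otherwise well understood on Artin--Tits presentations.
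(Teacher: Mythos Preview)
This statement is a \emph{conjecture} in the paper, not a theorem: there is no proof to compare against. The paper explicitly leaves it open, noting only that the hard part would be establishing transitivity of the relation $\su\inv\sv \revmR \ew$, and tentatively suggesting confluence of~$\revmR$ as a possible route. Your proposal is therefore not being measured against an existing argument but against an open problem.

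Your approach---induction on the area of a minimal van Kampen diagram, peeling off a boundary face via one of the four mixed reversing moves---is a different line of attack from the transitivity/confluence idea hinted at in the paper, and it is a reasonable one to try. You correctly observe that in an Artin--Tits $2m$-gon the two sides meet only at the source and sink vertices, so a boundary arc of length~$\ge 2$ either contains a source/sink (giving a sign change and hence a right- or left-reversing step) or lies entirely on one side (and if it is the whole side, a direct application of $\RR$ or $\RR\inv$ works).

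The gap you yourself isolate is genuine and is exactly why the statement remains conjectural. In a minimal diagram one cannot exclude a priori that \emph{every} boundary face meets $\partial\KKK$ only in short arcs strictly interior to one side, so that none of the four moves is available on~$\sw$. Your suggested remedies (a secondary complexity or local diamond moves) are plausible heuristics but not arguments: one would need to show that such modifications terminate and actually produce a boundary face of the favorable type, and no such argument is known. In short, your write-up is an honest outline of a strategy together with a correct identification of its obstruction, but it is not a proof, and the paper does not supply one either.
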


As in the proof of Proposition~\ref{P:Double}, the hard part for establishing~\eqref{E:ReducingBis} is to show that the relation $\su\inv \sv \revmR \ew$ is transitive. A natural approach would consist in showing that $\revmR$ is confluent, this meaning that, if we have $\sw \revmR \sw_1$ and $\sw \revmR \sw_1$, then $\sw_1 \revmR \sww$ and $\sw_2 \revR \sww$ hold for some~$\sww$. 

A possible proof of Conjecture~\ref{C:Artin} might entail an extension to arbitrary Artin--Tits groups of the handle reduction algorithm of~\cite{Dfo} that would also solve the word problem. But, in general, a proof of~\eqref{E:ReducingBis} alone need not solve the word problem, since a signed word can lead to infinitely many words under mixed reversing, even in the Garside case (see an example in~\cite{Dhg} involving $4$-strand braids). By contrast, \eqref{E:ReducingBis} is sufficient to solve the embeddability problem, as we shall now explain.

It is well known that left- and right-cancellativity are necessary conditions for a monoid to embed in a group, but that these conditions are not sufficient, as shows the example of
$$\Mon{\tta, \ttb, \ttc, \ttd, \tta', \ttb', \ttc', \ttd'}{\tta \ttc = \tta' \ttc' , \tta \ttd = \tta' \ttd', \ttb \ttc = \ttb' \ttc'},$$
where the relation $\ttb \ttd = \ttb' \ttd'$ fails in the monoid, but holds in every group that satisfies the above relations. As recalled in the proof of Corollary~\ref{C:WordPb}, assuming the existence of common multiples is sufficient to guarantee the embeddability of the monoid in a group of fractions. But, apart from this special case, very few embeddability criteria are known. Here is the point where mixed reversing might prove useful.

\begin{prop}
\label{P:Embeddability}
If $(\SSS, \RR)$ is a complete semigroup presentation that satisfies~\eqref{E:ReducingBis}, then the monoid~$\Mon\SSS\RR$ embeds in the group~$\Gr\SSS\RR$.
\end{prop}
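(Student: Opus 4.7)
The plan is to establish, for any two positive words $\ww, \www$ representing the same element of $\Gr\SSS\RR$, that $\ww \eqpR \www$. Since $\ww\inv \www$ then represents the identity in $\Gr\SSS\RR$, hypothesis~\eqref{E:ReducingBis} supplies a mixed reversing sequence $\ww\inv \www \revmR \ew$. By the completeness of $(\SSS,\RR)$, proving $\ww \eqpR \www$ is equivalent to exhibiting a pure right-reversing $\ww\inv \www \revR \ew$, so the task reduces to upgrading the given mixed reversing into a right-reversing one.

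I would proceed by induction on the length $n$ of the mixed reversing sequence. Because the intermediate signed words $\sw_i$ are not of the form $\ww\inv \www$, a strengthened induction hypothesis is required. A natural candidate is: for every signed word $\sw$ satisfying $\sw \revmR \ew$, every $\revR$-sequence from $\sw$ that terminates at a positive--negative word $\vvv \vv\inv$ yields $\vv \eqpR \vvv$. Applied to $\sw = \ww\inv \www$, this combines with Lemma~\ref{L:Equiv} (which gives $\ww \vvv \eqpR \www \vv$) and a further application of completeness to $\vv\inv \vvv$ to produce the desired $\ww\inv \www \revR \ew$.

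The inductive step splits into cases on the first step of the mixed reversing. A relation step from $\RR$ or $\RR\inv$ occurs inside a purely positive or purely negative block and merely replaces a subword by an $\eqpR$-equivalent one; completeness ensures that the outcome of any subsequent right-reversing is unaffected by such a substitution, so the strengthened induction hypothesis transfers immediately. A boundary right-reversing step $\ss\inv \sss \revR \vvv \vv\inv$ can be prepended to the right-reversing furnished by induction on the remaining $n-1$ steps, with the cube condition (Definition~\ref{D:Cube}) guaranteeing the coherence of the combined diagram.

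The hardest part will be the treatment of left-reversing steps: as Remark~\ref{R:LeftRev} warns, $\revlR$ is not the inverse of $\revR$, so such a step cannot simply be undone. My expectation is that an exchange-type lemma is needed, showing that every left-reversing step appearing in a mixed sequence terminating at $\ew$ can be commuted past the later steps---at the cost of invoking completeness and possibly the cube condition---and thereby removed. Once the sequence has been purged of left-reversing steps, the remaining relation and right-reversing steps combine into the sought pure right-reversing $\ww\inv \www \revR \ew$, and completeness then delivers $\ww \eqpR \www$, yielding the embedding of $\Mon\SSS\RR$ in $\Gr\SSS\RR$.
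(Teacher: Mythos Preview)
Your proposal correctly identifies the goal---showing $\ww \eqpR \www$ whenever the positive words $\ww, \www$ represent the same group element---and correctly locates the obstruction in the left-reversing steps. But the crucial ``exchange-type lemma'' that would let you commute a left-reversing step past later steps is only conjectured, not established. As you yourself note via Remark~\ref{R:LeftRev}, left- and right-reversing are genuinely asymmetric, and there is no evident mechanism for purging a left-reversing step from a mixed sequence while preserving termination at~$\ew$. Without that lemma the induction does not close, so the argument as written has a real gap at precisely the point you flag as hardest.

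The paper sidesteps this difficulty by tracking an invariant rather than transforming the sequence. It introduces the notion of an \emph{$\RR$-bridge from~$\uu$ to~$\vv$}: a signed word~$\sw$ admitting a decomposition $\sw = \uu_1 \vv_1\inv \cdots \uu_\pp \vv_\pp\inv$ together with positive witnesses~$\ww_\ii$ satisfying $\uu \eqpR \uu_1 \ww_1$, $\vv_\ii \ww_\ii \eqpR \uu_{\ii+1} \ww_{\ii+1}$, and $\vv_\pp \ww_\pp \eqpR \vv$. The key Lemma~\ref{L:Mixed} shows this property is preserved under every elementary mixed-reversing step; the left-reversing case is actually the easy one (it merely splits one link of the bridge into two), while completeness enters in the \emph{right}-reversing case, via Lemma~\ref{L:Complete}, to supply the new factorizing word. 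Since $\uu\vv\inv$ is trivially a bridge from~$\uu$ to~$\vv$ (witness $(\uu,\vv,\ew)$), and~\eqref{E:ReducingBis} gives $\uu\vv\inv \revmR \ew$, the empty word is itself a bridge from~$\uu$ to~$\vv$, which unpacks directly to $\uu \eqpR \ww_1 \eqpR \cdots \eqpR \ww_\pp \eqpR \vv$. Note that the paper starts from the positive--negative word~$\uu\vv\inv$ rather than your negative--positive~$\ww\inv\www$; this is what makes the bridge decomposition natural and avoids any need to produce a pure right-reversing sequence at all.
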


In the result above, completeness refers to right-reversing. Of course, the same conclusion holds under the symmetric hypothesis involving left-reversing. 

Proposition~\ref{P:Embeddability} will follow from controlling particular decompositions for a signed word. 

\begin{defi} [\bf bridge]
(Figure~\ref{F:Bridge}) Let $(\SSS, \RR)$ be a semigroup presentation and $\uu, \vv$ be words in the alphabet~$\SSS$. We say that a signed word~$\sw$ is an \emph{$\RR$-bridge} from~$\uu$ to~$\vv$ if there exists a sequence of positive words $(\uu_1, \vv_1, \ww_1, ..., \uu_\pp, \vv_\pp, \ww_\pp)$ satisfying
\begin{gather}
\label{E:Bridge1}
\sw = \uu_1 \vv_1\inv \uu_2 \vv_2\inv ... \uu_\pp \vv_\pp\inv,\\
\label{E:Bridge2}
\uu \eqpR \uu_1 \ww_1, \quad \vv_1 \ww_1 \eqpR \uu_2 \ww_2, \quad ...\quad ,  \vv_{\pp-1} \ww_{\pp-1} \eqpR \uu_\pp \ww_\pp, \quad
\vv_\pp \ww_\pp \eqpR \vv.
\end{gather}
\end{defi}

\begin{figure}[htb]
$$\begin{picture}(62,41)(0,1)
\put(0,0){\includegraphics{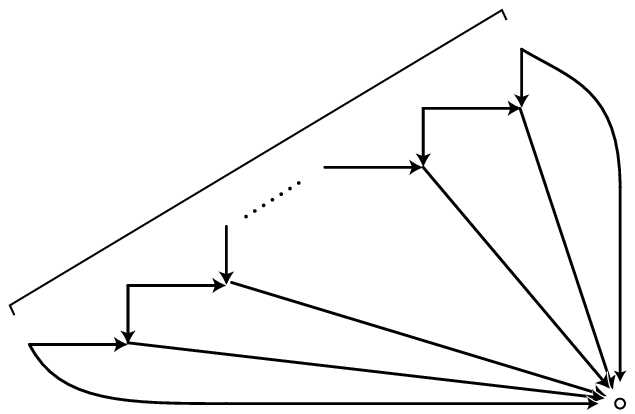}}
\put(6,8){$\uu_1$}
\put(16,14){$\uu_2$}
\put(34,26.5){$\uu_{\pp-1}$}
\put(46,32.5){$\uu_\pp$}
\put(13.5,10){$\vv_1$}
\put(23.5,16){$\vv_2$}
\put(43.5,28){$\vv_{\pp-1}$}
\put(53.5,33.5){$\vv_\pp$}
\put(4,1){$\uu$}
\put(30,6){$\ww_1$}
\put(36,10){$\ww_2$}
\put(49,18){$\ww_{\pp-1}$}
\put(56,22){$\ww_\pp$}
\put(60,34){$\vv$}
\put(24,28){$\sw$}
\end{picture}$$
\caption{\sf An $\RR$-bridge~$\sw$ from~$\uu$ to~$\vv$: a word equivalent to~$\uu \vv\inv$, plus a collection of $\eqpR$-commutative diagrams connecting~$\uu$ to~$\vv$ through~$\sw$.}
\label{F:Bridge}
\end{figure}

If $\sw$ is an $\RR$-bridge from~$\uu$ to~$\vv$, then the relations~\eqref{E:Bridge1} and~\eqref{E:Bridge2} easily imply that $\sw$ and $\uu \vv\inv$ represent the same element in the group~$\Gr\SSS\RR$. For our current purpose, the nice point is that bridges are preserved under mixed reversing.

\begin{lemm}
\label{L:Mixed}
If $(\SSS, \RR)$ is a complete semigroup presentation and $\sw$ is an $\RR$-bridge from~$\uu$ to~$\vv$, then every word~$\sww$ satisfying $\sw \revmR \sww$ is an $\RR$-bridge from~$\uu$ to~$\vv$ as well. 
\end{lemm}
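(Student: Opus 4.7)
My plan is to proceed by induction on the length of an $\revmR$-sequence from $\sw$ to $\sww$: since $\revmR$ is by definition a transitive closure, it suffices to verify the claim for a single elementary step, and the definition of $\revmR$ lists exactly four kinds of such steps.

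The two relation-application cases pose no difficulty. If $\sww$ is obtained from $\sw$ by substituting $\vvv$ for a positive subword $\vv$ where $\vv=\vvv$ belongs to $\RR$, then the occurrence of $\vv$ lies entirely inside one positive factor $\uu_\jj$; writing $\uu_\jj=\alpha\vv\beta$ and $\uu_\jj'=\alpha\vvv\beta$, we have $\uu_\jj\eqpR\uu_\jj'$, so every bridge equation of~\eqref{E:Bridge2} continues to hold with the same cap words. The negative-relation case is handled identically after taking inverses.

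The substantive case is the right-reversing step (the left-reversing case being symmetric). Here a subword $\ss\inv\sss$ is replaced by $\vvv'(\vv')\inv$ via some relation $\ss\vvv'=\sss\vv'$ of $\RR$, and this subword necessarily straddles a bridge corner: $\vv_\ii=\vv_\ii'\ss$ and $\uu_{\ii+1}=\sss\uu_{\ii+1}'$. The natural bridge decomposition of $\sww$ has $\pp+1$ pieces, obtained by replacing the pairs $(\uu_\ii,\vv_\ii),(\uu_{\ii+1},\vv_{\ii+1})$ with the three pairs $(\uu_\ii,\vv_\ii'),(\vvv',\vv'),(\uu_{\ii+1}',\vv_{\ii+1})$. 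Keeping the old cap words away from the modified corner, the task reduces to producing three new caps $\ww_\ii'',\ww_{\ii+1}'',\ww_{\ii+2}''$ satisfying
$$\vv_{\ii-1}\ww_{\ii-1}\eqpR\uu_\ii\ww_\ii'',\quad \vv_\ii'\ww_\ii''\eqpR\vvv'\ww_{\ii+1}'',\quad \vv'\ww_{\ii+1}''\eqpR\uu_{\ii+1}'\ww_{\ii+2}'',\quad \vv_{\ii+1}\ww_{\ii+2}''\eqpR\uu_{\ii+2}\ww_{\ii+2}.$$

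The hard part is exactly this local construction. The inputs are the original corner equivalence $\vv_\ii'\ss\ww_\ii\eqpR\sss\uu_{\ii+1}'\ww_{\ii+1}$ and the tile $\ss\vvv'\eqpR\sss\vv'$, and the obstruction is that $\ss$ does not commute past $\vv_\ii'$, so no formal reshuffling of $\ww_\ii,\ww_{\ii+1}$ alone will suffice. My plan is to apply Lemma~\ref{L:Complete} to the equivalence $(\vv_\ii'\ss)\cdot\ww_\ii\eqpR\sss\cdot(\uu_{\ii+1}'\ww_{\ii+1})$: completeness then supplies a reversing of $(\vv_\ii'\ss)\inv(\sss\uu_{\ii+1}')$ together with a common tail word $\ww^*$ that factors $\ww_\ii$ and $\uu_{\ii+1}'\ww_{\ii+1}$ through the reversing. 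By routing this reversing through the inner tile $\ss\inv\sss\revR\vvv'(\vv')\inv$, I would extract from the resulting auxiliary words the three caps we need, each expressed as a product of the original caps with a correction factor read off from the cube condition. This is precisely the step where completeness, rather than a purely formal argument, is indispensable.
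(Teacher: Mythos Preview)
Your overall plan---reduce to a single elementary step and treat the four kinds separately---is exactly the paper's. The relation cases are indeed trivial, and your identification of right-reversing as the place where completeness enters is correct. But two points need attention.

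\medskip

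\textbf{Left-reversing is \emph{not} symmetric to right-reversing.} The bridge $\uu_1\vv_1\inv\cdots\uu_\pp\vv_\pp\inv$ has two kinds of sign changes: the peaks~$\uu_\kk\vv_\kk\inv$, where $\uu_\kk$ and~$\vv_\kk$ share the \emph{same} cap~$\ww_\kk$, and the valleys~$\vv_\kk\inv\uu_{\kk+1}$, bridged by the equation $\vv_\kk\ww_\kk\eqpR\uu_{\kk+1}\ww_{\kk+1}$. Left-reversing happens at a peak, and there the paper's Case~2 shows that the two new caps are simply $\ss\ww_\kk$ and~$\sss\ww_\kk$: the new middle equation is just the relation itself (multiplied by~$\ww_\kk$), and the outer equations are the old ones with one letter absorbed into the cap. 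No completeness is used. If you genuinely tried a ``symmetric'' argument here via Lemma~\ref{L:Complete}, you would need \emph{left}-completeness, which is not assumed.

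\medskip

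\textbf{In the right-reversing case, your setup needs correction and can be simplified.} For the pattern~$\ss\inv\sss$ to sit at the junction $\vv_\ii\inv\uu_{\ii+1}$, the word~$\vv_\ii$ must \emph{begin} with~$\ss$, so $\vv_\ii=\ss\vv_\ii'$, not $\vv_\ii'\ss$. With that fix, your three-pair decomposition of~$\sww$ is correct. Moreover, only \emph{one} new cap is needed: take $\ww_\ii''=\ww_\ii$ and $\ww_{\ii+2}''=\ww_{\ii+1}$, so that the first and last of your four equations become the old bridge equations verbatim, and the task reduces to finding a single~$\ww^*$ with
\[
\vv_\ii'\,\ww_\ii \eqpR \vvv'\,\ww^*
\quad\text{and}\quad
\vv'\,\ww^* \eqpR \uu_{\ii+1}'\,\ww_{\ii+1}.
\]
This~$\ww^*$ is exactly what right-completeness, applied via Lemma~\ref{L:Complete} to the single-letter equivalence $\ss\cdot(\vv_\ii'\ww_\ii)\eqpR\sss\cdot(\uu_{\ii+1}'\ww_{\ii+1})$, supplies---which is the paper's Case~3. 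Your more elaborate plan (reversing all of $\vv_\ii\inv\uu_{\ii+1}$ and then ``routing through'' the tile) would also work in the complemented case, but costs you an extra layer of bookkeeping that the single-letter application avoids.
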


\begin{proof} 
We assume that $(\uu_1, \vv_1, \ww_1, ..., \uu_\pp, \vv_\pp, \ww_\pp)$ is a sequence witnessing that $\sw$ is an $\RR$-bridge from~$\uu$ to~$\vv$, and we shall construct a sequence witnessing that $\sww$ is also an $\RR$-bridge from~$\uu$ to~$\vv$. Without loss of generality we may assume that $\sww$ is obtained by one elementary step of mixed reversing from~$\sw$. 

Case 1: The word~$\sww$ is obtained from~$\sw$ by applying one relation of~$\RR$ (\resp$\RR\inv$). We may assume that none of the intermediate words~$\vv_1, \uu_2, \vv_2, ..., \vv_{\pp-1}, \uu_\pp$ is empty, for, otherwise, we may gather adjacent words~$\uu_\kk$ or~$\vv_\kk$. Then, the intermediate words~$\uu$ and~$\vv$ are nonempty implies that the subword of~$\sw$ involved in the transformation is a subword of some factor~$\vv_\kk$ (\resp $\uu_\kk$). Define $\vv'_\kk$ (\resp $\uu'_\kk$) to be the result of applying the involved relation in~$\vv_\kk$ (\resp $\uu_\kk$), and $\uu'_\ii, \vv'_\ii, \ww'_\ii$ to be equal to $\uu_\ii, \vv_\ii, \ww_\ii$ in all other cases. Then $(\uu'_1, ..., \ww'_\pp)$ is the expected witness. 

Case 2: (Figure~\ref{F:BridgeRev} top) The word~$\sww$ is obtained from~$\sw$ by applying one step of left-reversing. By definition, there exists an index~$\kk$, letters~$\ss, \sss$ in~$\SSS$, and a relation $\vvv \ss =  \vv \sss$ of~$\RR$  such that $\uu_\kk$ ends with~$\ss$, $\vv_\kk$ ends with~$\sss$, and $\sww$ is obtained from~$\sw$ by replacing the corresponding subword~$\sss \ss\inv$ with~$\vv\inv \vvv$. Define $\uuu_\kk, ... \www_{\kk+1}$ by $\uu_\kk = \uuu_\kk \ss$, $\vvv_\kk = \vvv$, $\uuu_{\kk+1} = \vv$, $\vv_\kk = \vvv_{\kk+1} \sss$, $\www_\kk = \ss \ww_\kk$, $\www_{\kk+1} = \sss \ww_\kk$, and complete with $\uuu_\ii = \uu_\ii$ for $\ii < \kk$ and $\uuu_{\ii+1} = \uu_\ii$ for $\ii > \kk$, and similarly for $\vvv_\ii$ and~$\www_\ii$. Then $(\uu_1, \vv_1, \ww_1, ..., \uuu_{\pp+1}, \vvv_{\pp+1}, \www_{\pp+1})$ is the expected witness.

Case 3: (Figure~\ref{F:BridgeRev} bottom) The word~$\sww$ is obtained from~$\sw$ by applying one step of right-reversing. By definition, there exists an index~$\kk$, letters~$\ss, \sss$ in~$\SSS$, and a relation $\ss \vvv = \sss \vv$ of~$\RR$  such that $\vv_{\kk-1}$ begins with~$\sss$, $\uu_\kk$ begins with~$\ss$, and $\sww$ is obtained from~$\sw$ by replacing the corresponding subword~$\ss\inv \sss$ with~$\vvv \vv\inv$. Define $\uuu_{\kk-1}, ... \uuu_{\kk+1}$ by $\vv_{\kk-1} = \ss \vvv_{\kk-1}$, $\uuu_\kk = \vv$, $\vvv_{\kk} = \vvv$, $\uu_\kk = \sss \uuu_{\kk+1}$, and complete with $\uuu_\ii = \uu_\ii$ for $\ii < \kk$ and $\uuu_{\ii+1} = \uu_\ii$ for $\ii > \kk$, and similarly for $\vvv_\ii$ and~$\www_\ii$. Here is the key point. By hypothesis, we have $\ss \vvv_{\kk-1} \www_{\kk-1}  \eqpR \sss \uuu_{\kk+1} \www_{\kk+1}$. As the presentation~$(\SSS, \RR)$ is complete with respect to left-reversing, there must exist a word~$\www_\kk$ satisfying 
$$\uuu_\kk \www_\kk \eqpR \vvv_{\kk-1} \www_{\kk-1} 
\quad\mbox{and}\quad
\vvv_\kk \www_\kk \eqpR \uuu_{\kk+1} \www_{\kk+1}.$$
Then $(\uu_1, \vv_1, \ww_1, ..., \uuu_{\pp+1}, \vvv_{\pp+1}, \www_{\pp+1})$ is  the expected witness.
\end{proof}

\begin{figure}[htb]
\begin{picture}(106,32)(2,0)
\put(0,0){\includegraphics{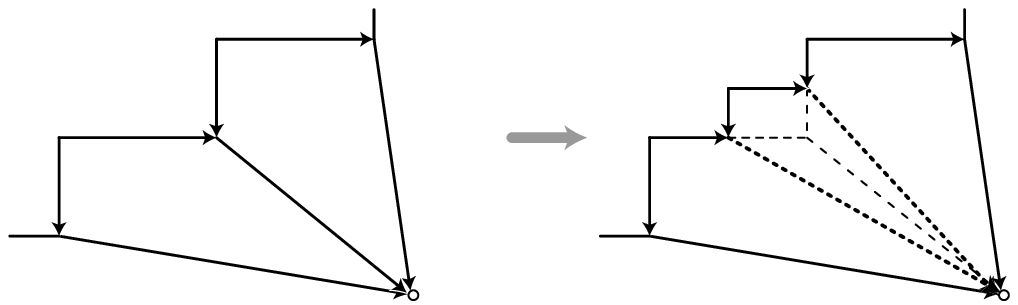}}
\put(20,5){$\ww_{\kk-1}$}
\put(30,11){$\ww_\kk$}
\put(40,16){$\ww_{\kk+1}$}
\put(7,11){$\vv_{\kk-1}$}
\put(12,18){$\uu_{\kk}$}
\put(23,22){$\vv_{\kk}$}
\put(26,29){$\uu_{\kk+1}$}
\put(80,5){$\www_{\kk-1}$}
\put(85,11){$\www_\kk$}
\put(90,14){$\www_{\kk+1}$}
\put(100,16){$\www_{\kk+2}$}
\put(66.5,11){$\vvv_{\kk-1}$}
\put(68,18.5){$\uuu_{\kk}$}
\put(74.5,19){$\vvv_{\kk}$}
\put(74,23.5){$\uuu_{\kk+1}$}
\put(82.5,24){$\vvv_{\kk+1}$}
\put(86,28.5){$\uuu_{\kk+2}$}
\end{picture}
\begin{picture}(92,27)(0,2)
\put(0,0){\includegraphics{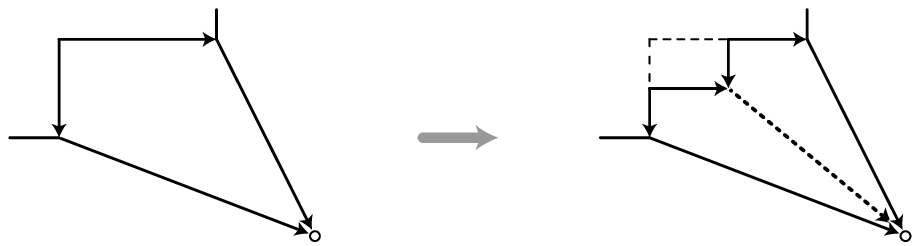}}
\put(13,4){$\ww_{\kk-1}$}
\put(27,12){$\ww_\kk$}
\put(6.5,15){$\vv_{\kk-1}$}
\put(12,22){$\uu_{\kk}$}
\put(73,4){$\www_{\kk-1}$}
\put(87,12){$\www_{\kk+1}$}
\put(82,10){$\www_{\kk}$}
\put(66.5,13){$\vvv_{\kk-1}$}
\put(68,17){$\uuu_{\kk}$}
\put(74.5,18){$\vvv_{\kk}$}
\put(74,22.5){$\uuu_{\kk+1}$}
\end{picture}
\caption{\sf Applying one step of left-reversing (top) or right-reversing (bottom) creates in general a new commutative diagram; the point is that, in the case of right-reversing, completeness guarantees the existence of the factorizing edge~$\www_\kk$.}
\label{F:BridgeRev}
\end{figure}

We can now establish Proposition~\ref{P:Embeddability}.

\begin{proof}[Proof of Proposition~\ref{P:Embeddability}]
The point is to establish that, if two positive words~$\uu, \vv$ represent the same element in the group~$\Gr\SSS\RR$, then they also represent the same element in the monoid~$\Mon\SSS\RR$, \ie, that $\uu \eqpR \vv$ holds. So assume that $\uu, \vv$ represent the same element in the group. Then $\uu \vv\inv$ represents~$1$ in the group. So, by hypothesis, we have $\uu \vv\inv \revmR \ew$. 

Next, we observe that $\uu \vv\inv$ is an $\RR$-bridge from~$\uu$ to~$\vv$, as witnesses the sequence~$(\uu, \vv, \ew)$: indeed, in this case, \eqref{E:Bridge2} reduces to the valid statements $\uu \eqpR \uu \ew$ and $\vv \ew \eqpR \vv$. 

By Lemma~\ref{L:Mixed}, we deduce that the empty word is an $\RR$-bridge from~$\uu$ to~$\vv$. Assume that $(\uu_1, \vv_1, \ww_1, ..., \uu_\pp, \vv_\pp, \ww_\pp)$ is a witness-sequence. Then \eqref{E:Bridge1} implies $\uu_1 = \vv_1 = ... = \uu_\pp = \vv_\pp = \ew$, and, therefore, \eqref{E:Bridge2} reads
$$\uu \eqpR \ww_1 \eqpR \ww_2 \eqpR ... \eqpR \ww_\pp \eqpR \vv,$$
so $\uu \eqpR \vv$ holds, as expected.
\end{proof}

If Conjecture~\ref{C:Artin} is true, applying Proposition~\ref{P:Embeddability} would provide an alternative proof of the embeddability of every Artin--Tits monoid in the associated group, arguably more natural than the beautiful but indirect argument of~\cite{Par} based on the existence of certain linear representations extending the Lawrence--Krammer representation of braids~\cite{Krb}.

\goodbreak

Finally, it is easy to deduce from Lemma~\ref{L:Mixed} one more result involving mixed reversing. 

\begin{prop}
\label{P:MixedBis}
Assume that $(\SSS, \RR)$ is a complete semigroup presentation, and we have $\sw \revmR \sww$ with $\sw = \uu \vv\inv$ and $\uu, \vv \in\SSS^*$. Then we have $\sww \revR\nobreak \uuu \vvv{}\inv$ for some~$\uuu, \vvv$ in~$\SSS^*$ satisfying $\uu \eqpR \uu' \ww$ and $\vv \eqpR \vv' \ww$ for some~$\ww$ in~$\SSS^*$.
\end{prop}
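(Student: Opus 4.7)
The plan is to exploit the bridge machinery of Lemma~\ref{L:Mixed}. Observe first that the initial word $\sw = \uu \vv\inv$ is trivially an $\RR$-bridge from~$\uu$ to~$\vv$, witnessed by the length-one sequence $(\uu, \vv, \ew)$, since in that case~\eqref{E:Bridge2} reduces to $\uu \eqpR \uu \cdot \ew$ and $\vv \cdot \ew \eqpR \vv$. As $\sw \revmR \sww$, Lemma~\ref{L:Mixed} immediately yields that $\sww$ is itself an $\RR$-bridge from~$\uu$ to~$\vv$: fix a witnessing sequence $(\uu_1, \vv_1, \ww_1, \ldots, \uu_\pp, \vv_\pp, \ww_\pp)$ satisfying~\eqref{E:Bridge1} and~\eqref{E:Bridge2}. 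It will then suffice to establish, by induction on~$\pp$, the stronger auxiliary claim that \emph{any} word of shape~\eqref{E:Bridge1} equipped with a witness for~\eqref{E:Bridge2} right-reverses to some positive--negative word $\uuu \vvv\inv$ with $\uu \eqpR \uuu \ww$ and $\vv \eqpR \vvv \ww$ for a common~$\ww$.

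For the base case $\pp = 1$, the word $\sww = \uu_1 \vv_1\inv$ is already of the desired form with $\uuu := \uu_1$, $\vvv := \vv_1$, $\ww := \ww_1$. For the inductive step I would apply Lemma~\ref{L:Complete} to the rightmost bridge equivalence $\vv_{\pp-1} \ww_{\pp-1} \eqpR \uu_\pp \ww_\pp$; this provides positive words $\vvvv, \uuuu, \ww^*$ satisfying
$$\vv_{\pp-1}\inv \uu_\pp \revR \vvvv \uuuu\inv, \qquad \ww_{\pp-1} \eqpR \vvvv \ww^*, \qquad \ww_\pp \eqpR \uuuu \ww^*.$$
Right-reversing the innermost negative--positive block inside~$\sww$ then yields
$$\sww \revR \uu_1 \vv_1\inv \cdots \uu_{\pp-2} \vv_{\pp-2}\inv \, (\uu_{\pp-1} \vvvv)(\vv_\pp \uuuu)\inv,$$
which is a staircase of length~$\pp-1$. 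One checks that the shorter sequence $(\uu_1, \vv_1, \ww_1, \ldots, \uu_{\pp-2}, \vv_{\pp-2}, \ww_{\pp-2}, \uu_{\pp-1} \vvvv, \vv_\pp \uuuu, \ww^*)$ is a valid witness for~\eqref{E:Bridge2}: the only new equivalences to verify are $\vv_{\pp-2} \ww_{\pp-2} \eqpR (\uu_{\pp-1}\vvvv) \ww^*$ and $(\vv_\pp \uuuu) \ww^* \eqpR \vv$, and both follow at once by substituting the Lemma~\ref{L:Complete} relations into the original chain. The induction hypothesis applied to this shorter bridge then supplies the required terminal form $\uuu \vvv\inv$ and the common multiplier~$\ww$, and concatenating the two right-reversings closes the argument.

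The main obstacle is the indexed bookkeeping in the inductive step, in particular convincing oneself that the new rightmost triple $(\uu_{\pp-1}\vvvv, \vv_\pp \uuuu, \ww^*)$ splices correctly into the shortened version of~\eqref{E:Bridge2}. Once this verification is done, everything else is essentially mechanical and boils down to applying Lemma~\ref{L:Complete}. A harmless edge case arises when some intermediate $\uu_\ii$ or $\vv_\ii$ happens to be empty: the corresponding junction then collapses without any reversing step needed, and the bridge length drops accordingly, so the induction goes through without special treatment.
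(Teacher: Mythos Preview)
Your proposal is correct and follows essentially the same route as the paper: first use Lemma~\ref{L:Mixed} to conclude that $\sww$ is an $\RR$-bridge from~$\uu$ to~$\vv$, then use the characterization of completeness in Lemma~\ref{L:Complete} to show that every bridge right-reverses to a positive--negative word with the required factorization. The paper leaves the second step as ``one easily shows''; your induction on the bridge length~$\pp$, collapsing the rightmost junction via Lemma~\ref{L:Complete}, is exactly the intended way to fill this in, and your verification of the shortened witness sequence is correct (with the understanding that for $\pp=2$ the left-hand equivalence to check is $\uu \eqpR (\uu_1\vvvv)\ww^*$ rather than one involving a nonexistent~$\vv_{\pp-2}\ww_{\pp-2}$).
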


\begin{proof}[Proof (sketch)]
Using the characterization of completeness given in Lemma~\ref{L:Complete}, one easily shows that, if $\sww$ is an $\RR$-bridge from~$\uu$ to~$\vv$, then we have $\sww \revR\nobreak \uuu \vvv{}\inv$ for some words~$\uuu, \vvv$ in~$\SSS^*$ satisfying $\uu \eqpR \uu' \ww$ and $\vv \eqpR \vv' \ww$ for some~$\ww$ in~$\SSS^*$. Now, by Lemma~\ref{L:Mixed}, $\uu \vv\inv \revmR\nobreak \sww$ implies that $\sww$ is an $\RR$-bridge from~$\uu$ to~$\vv$. 
\end{proof}

This result answers a question implicit in Remark~\ref{R:LeftRev}: we observed that $\sw \revlR \sww$ need not imply $\sww \revR \sw$, but Proposition~\ref{P:MixedBis} shows that, if $\sw$ is positive--negative, then $\sw \revlR \sww$ implies $\sww \revR \swww$ for some~$\swww$ that is connected with~$\sw$ simply.

\section{Subword reversing: efficiency}
\label{S:Efficiency}

In Section~\ref{S:Description}, subword reversing was introduced as a strategy for constructing van Kampen diagrams. When this strategy works, \ie, when the considered presentation happens to be complete, it is natural to bring the quality of this strategy into question. We shall see below that subword reversing need not be optimal, but that some explicit bounds exist on the lack of optimality. More generally, we gather here a few results about the algorithmic complexity of subword reversing, both in the general case and in the specific case of Artin's presentation of braid groups, which is a key example.

\subsection{Upper bounds}

For each semigroup presentation, there is a natural notion of distance between equivalent words, and there is a similar notion for each particular strategy constructing derivations between equivalent words.

\begin{defi}[\bf distances]
$(i)$ If $(\SSS, \RR)$ is a semigroup presentation and $\ww, \www$ are $\RR$-equivalent words of~$\SSS^*$, the \emph{combinatorial distance}~$\distR(\ww, \www)$ is the minimal number of relations of~$\RR$ relations needed to transform~$\ww$ into~$\www$.

$(ii)$ If, moreover, $(\SSS, \RR)$ is complete, we define $\distRrev(\ww, \www)$ to be the minimal number of nontrivial\footnote{a reversing step of the form $\ss\inv \ss \rev \ew$ is called \emph{trivial}; all other reversing steps are called \emph{nontrivial}} steps needed to reverse $\ww\inv \www$ into the empty word.
\end{defi}

By definition, we have
\begin{equation}
\label{E:Distances}
\distR(\ww, \www) \le \distRrev(\ww, \www)
\end{equation}
for all pairs of $\RR$-equivalent words~$\ww, \www$, and saying that the reversing stretegy is optimal would mean that \eqref{E:Distances} is an equality. This need not be true in general.

\begin{exam}
\label{X:NotOptimal}
Let us consider Artin's presentation~\eqref{E:BraidPres} of the $4$-strand braid group~$B_4$, and the two words $\sig1 \sig2 \sig1 \sig3 \sig2 \sig1$ and $\sig3 \sig2 \sig3 \sig1 \sig2 \sig3$, which both represent the braid~$\Delta_4$. Then the combinatorial distance turns out to be~$6$, whereas $8$~reversing steps are needed to reverse the quotient into the empty word~\cite{Dhx}. So, in this case, the reversing strategy is not optimal, \ie, it does not provide a shortest derivation between the two words, or, equivalently, a van Kampen diagram with the minimal number of faces.
\end{exam}

However, it turns out that the gap between the combinatorial distance and the reversing distance cannot be arbitrarily large. Indeed, without assuming anything about the termination of reversing, one has the following nontrivial result. Hereafter, we denote by $\vert\ww\vert$ the length (number of letters) of a word~$\ww$.

\begin{prop} \cite{Dgc}
\label{P:DoubleExp}
Assume that $(\SSS, \RR)$ is a finite, complete, complemented presentation, and, moreover, that the relations of $\RR$ preserve the length\footnote{\ie, they are of the form $\vv = \vvv$ with $\vert\vv\vert = \vert\vvv\vert$}. Then there exists a constant~$\CC$ such that, for all $\RR$-equivalent words $\ww, \www$, one has
\begin{equation}
\distR(\ww, \www) \le \distRrev(\ww, \www) \le \distR(\ww, \www) \cdot 2^{2^{\CC\vert \ww \vert}}.
\end{equation}
\end{prop}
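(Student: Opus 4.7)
The plan is to induct on the combinatorial distance $d = \distR(\ww, \www)$, fixing a minimal derivation $\ww = \ww_0 \to \ww_1 \to \cdots \to \ww_d = \www$; by length-preservation, every $\ww_\ii$ has length $\vert\ww\vert$. The base case $d = 0$ is immediate since $\distRrev(\ww,\ww) = 0$. I would reduce the inductive step to two ingredients: a subadditivity principle $\distRrev(\ww_0, \ww_d) \le \distRrev(\ww_0, \ww_1) + \distRrev(\ww_1, \ww_d)$, and a uniform single-step bound $\distRrev(\ww_\ii, \ww_{\ii+1}) \le 2^{2^{\CC\vert\ww\vert}}$ for a constant $\CC$ depending only on the presentation.

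For subadditivity, given the canonical reversing diagrams of $\ww_0\inv \ww_1 \revR \ew$ and $\ww_1\inv \ww_d \revR \ew$ (rectangles with $\ww_0$, respectively $\ww_1$, as left side, and $\ww_1$, respectively $\ww_d$, as top, with no overhang), I would exhibit a composite reversing diagram for $\ww_0\inv \ww_d$ whose tile-count is at most the sum of the two. The construction treats the first diagram as a prefix and feeds its top boundary $\ww_1$ as input into the second, reinterpreted through the cube condition (Proposition~\ref{P:Restate}), which guarantees that the resulting composite is itself a reversing diagram. Since complemented reversing is deterministic, the canonical reversing of $\ww_0\inv \ww_d$ uses no more non-trivial tiles than any composite.

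For the single-step bound, write $\ww_\ii = \uu\vv\uuu$ and $\ww_{\ii+1} = \uu\vvv\uuu$ with $(\vv,\vvv) \in \RR$, and analyse the canonical reversing of $\uuu\inv \vv\inv \uu\inv \uu \vvv \uuu$ directly. The diagram features a central annihilation of $\uu\inv \uu$, a bounded relation-closure of $\vv\inv \vvv$ (finite since $\vv \eqpR \vvv$), and cascades of positive overhangs propagating outward into the flanking $\uuu\inv$ and $\uuu$ blocks, eventually cancelling. Length-preservation keeps the positive and negative lengths of every intermediate signed word equal. The key estimate is a single-exponential upper bound $M = 2^{\CC_1\vert\ww\vert}$ on the length of any intermediate positive overhang. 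Granted this, the total tile-count is bounded by the number of distinct signed words of length at most $2M$ that can appear, namely at most $\vert\SSS\vert^{2M} \le 2^{2^{\CC\vert\ww\vert}}$, since deterministic reversing never revisits a configuration.

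The hard part will be the single-exponential bound $M$ on overhang length: a naive step-by-step estimate gives a tower of exponentials, since each reversing step may multiply overhang lengths by a bounded factor $2L_{\max}-2$. I would instead track a potential function on signed words accounting for the depth of pending $\cpR$-evaluations, establish its strict decrease at each non-trivial reversing step using the cube condition and length-preservation, and bound it singly-exponentially in $\vert\ww\vert$ by exploiting the finite closure properties of the complement operation applied to subwords of $\ww$. Once $M$ is established, the counting argument above completes the proof.
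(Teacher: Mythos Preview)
Your approach contains a fatal gap: the subadditivity (triangle inequality)
\[
\distRrev(\ww_0,\ww_d)\ \le\ \distRrev(\ww_0,\ww_1)+\distRrev(\ww_1,\ww_d)
\]
is \emph{false} in general. To see this, note first that for a single relation step one always has $\distRrev(\ww_\ii,\ww_{\ii+1})=1$. Indeed, writing $\ww_\ii=\uu\vv\uuu$ and $\ww_{\ii+1}=\uu\vvv\uuu$ with $\vv=\vvv$ a relation of~$\RR$, the word $(\ww_\ii)\inv\ww_{\ii+1}=\uuu\inv\vv\inv\uu\inv\uu\,\vvv\,\uuu$ reverses as follows: the block $\uu\inv\uu$ vanishes by trivial steps; then, since the presentation is complemented, the \emph{only} relation $\ss\ldots=\sss\ldots$ with $\ss,\sss$ the initial letters of~$\vv,\vvv$ is $\vv=\vvv$ itself, so $\vv\inv\vvv$ reverses to~$\ew$ in exactly one nontrivial step; finally $\uuu\inv\uuu$ vanishes trivially. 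Hence if your triangle inequality held, summing along a minimal derivation would give $\distRrev(\ww,\www)\le\distR(\ww,\www)$, and together with~\eqref{E:Distances} this would force $\distRrev=\distR$ everywhere---directly contradicting Example~\ref{X:NotOptimal}, where $\distR=6$ but $\distRrev=8$.

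The mistake lies in your ``composite diagram'' paragraph: gluing the reversing diagram for $\ww_0\inv\ww_1$ to that for $\ww_1\inv\ww_d$ does \emph{not} produce a reversing diagram for $\ww_0\inv\ww_d$. In a complemented presentation the reversing diagram from $\ww_0\inv\ww_d$ is uniquely determined and need not factor through~$\ww_1$ at all; the cube condition does not give you a way to ``feed the top boundary~$\ww_1$ as input'' to the second diagram in any sense that controls tile counts. Consequently the whole inductive scheme collapses, and the later discussion of overhang lengths and potential functions is moot. (For the record, the single-step bound you aim for is not the issue---as shown above it equals~$1$, not $2^{2^{\CC|\ww|}}$.) The paper itself gives no proof here, referring to~\cite{Dgc}; the comment following the statement indicates that the genuine argument tracks, via the cube condition, how the \emph{unique} reversing diagram for $\ww\inv\www$ is transformed when one relation is applied to~$\www$, and bounds the multiplicative growth of its tile count---a fundamentally different mechanism from a triangle inequality.
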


The constant~$\CC$ mentioned in Proposition~\ref{P:DoubleExp} can be computed effectively: roughly speaking, it measures the maximal number of relations involved in a cube condition for a triple of letters of~$\SSS$. The reason why a double exponential appears is not yet clear, nor is either the possibility of extending the result to a non-complemented or non-length-preserving context.

Stronger results exist in particular cases. In the context of Proposition~\ref{P:Terminate}, we observed that, in the complemented case and when there exists a finite set of words that is closed under complement, the existence of grids such as the one of Figure~\ref{F:Termination} provides a quadratic upper bound for the number of reversing steps: there exists a constant~$\CC$ such that, for all $\RR$-equivalent words~$\ww, \www$, we have
\begin{equation}
\label{E:Quadratic}
\distRrev(\ww, \www) \le \CC \cdot \vert \ww\vert \cdot \vert \www\vert,
\end{equation}
where $\CC$ is a constant that can be computed explicitely from the presentation. 

Actually, a stronger result holds, as there is no need that the words~$\ww, \ww'$ be $\RR$-equivalent. 

\begin{defi}[\bf complexity]
If $(\SSS, \RR)$ is a complemented presentation, and~$\ww, \www$ are words of~$\SSS^*$, the \emph{reversing complexity}~$\compl(\ww, \www)$ of~$(\ww, \www)$ is the number of nontrivial steps needed to reverse~$\ww\inv \www$ to a positive--negative word, if it exists.
\end{defi}

If $\ww$ and~$\www$ are $\RR$-equivalent, $\compl(\ww, \www)$ coincides with~$\distRrev(\ww, \www)$ but, in general, $\compl(\ww, \www)$ is $\distRrev(\ww\vvv, \www\vv)$, where $\vv, \vvv$ are the positive words such that $\ww\inv \www$ reverses to~$\vvv \vv\inv$.  In the complemented case and when there exists a finite set of words that is closed under complement, Proposition~\ref{P:Terminate} implies
\begin{equation}
\label{E:QuadraticBis}
\compl(\ww, \www) \le \CC \cdot \vert \ww\vert \cdot \vert \www\vert,
\end{equation}
for all words~$\ww, \www$. This holds for every lcm-presentation of a Garside monoid, \ie, every presentation obtained by selecting, for each pair of minimal generators~$\ss, \sss$, words~$\vv, \vvv$ such that both $\ss \vvv$ and~$\sss \vv$ represent the right-lcm of~$\ss$ and~$\sss$. So, \eqref{E:QuadraticBis} holds in particular for the standard presentation of the spherical Artin--Tits groups and, even more particularly, for Artin's presentation of the braid group~$B_\nn$: for each fixed~$\nn$, there exists a constant~$\CC_\nn$ such that $\compl(\ww, \www) \le\nobreak \CC_\nn \ell^2$ holds for all positive $\nn$-strand braid words of length at most~$\ell$.

Things become much more difficult when we go to~$B_\infty$, \ie, we impose no fixed limit on the indices of the letters~$\sig\ii$. 

\begin{ques}
\label{Q:BraidComplexity}
What is the least upper bound for the reversing complexity of~$(\ww, \www)$ for $\ww, \www$ positive braid words of length at most~$\ell$?
\end{ques}

Surprisingly, the answer is not known. The inequality
$$\compl(\sig1 \sig3 ... \sig{2\ell-1}, \sig{2\ell} \sig{2\ell-2} ... \sig2) \ge \frac43 \ell^4$$
is established in~\cite{Dhw}, and we conjecture that $O(\ell^4)$ is the highest possible complexity. On the other hand, the only upper bounds proved so far are an exponential bound~$O(3^{4\ell})$ in~\cite{Dhg}, improved to~$O(3^{2\ell})$ in~\cite{AutT}, both requiring rather delicate arguments.

Let us also recall that, in the infinitary context mentioned in Remark~\ref{R:LD}, the only proved upper bound for~$\compl(\ww, \ww)$ is a tower of exponentials of exponential height with respect to the length of~$\ww$ and~$\www$---which is not superseded by the double exponential of Proposition~\ref{P:DoubleExp} as, here, we do not assume the initial words to be equivalent.

\subsection{Lower bounds}
\label{S:Lower}

We shall conclude this survey with another application of subword reversing, namely an application to establishing lower bounds on the combinatorial distance.

The problem we address is to establish effective lower bounds on the combinatorial distance between two words, usually a difficult task. By contrast, explicitly computing the reversing distance may be relatively easy when we consider words of a particular form. The problem is that, in order to deduce from the value of~$\distRrev(\ww, \www)$ a lower bound on~$\dist(\ww, \www)$, we have to know that \eqref{E:Distances} is an equality. So our problem is to prove that reversing is optimal for some specific words, \ie, that the van Kampen diagram deduced from reversing entails as few tiles as possible. We shall now describe a method for answering such questions in the case of Artin's presentation of braid groups~\eqref{E:BraidPres}. This method is reminiscent of the approach developed in~\cite{Dhw} for establishing lower bounds on the rotation distance between binary trees. 

By definition, a van Kampen diagram consists of tiles, each of which is indexed by some relation of the presentation.  In order to prove that a van Kampen diagram~$\KKK$ is possibly optimal, we can attribute names to the tiles and, typically, show that any van Kampen diagram for the considered pair of words must contain a certain number~$N_1$ of tiles with name~$\nu_1$, plus a certain number~$N_2$ of tiles with name~$\nu_2$, etc. If the total number of tiles in~$\KKK$ is the sum of the various numbers~$N_1, N_2$, ..., we are sure that $\KKK$ is optimal.

In the current case of braids, we shall use a ``name vs.\,position'' duality to attribute names to the edges of van Kampen diagrams and, from there, to the tiles. It is standard---see for instance~\cite{ElM} or~\cite{Eps}---to associate with each positive braid word~$\ww$, \ie, every sequence of letters~$\sig\ii$, a \emph{braid diagram}~$\Diag\ww$ consisting of strands that cross, so that $\sig\ii$ corresponds to a crossing of the strands at positions~$\ii$ and~$\ii + 1$ (Figure~\ref{F:Sigma}). 

\begin{figure}[htb]
\begin{picture}(50,9)
\put(0,0){\includegraphics{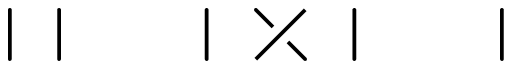}}
\put(-8,2){$\sig\ii$:}
\put(-1,7){$1$}
\put(4,7){$2$}
\put(24,7){$\ii$}
\put(27,7){$\ii\!+\!1$}
\put(49,7){$\nn$}
\put(11,2){...}
\put(41,2){...}
\end{picture}
\caption{\sf The $\nn$-strand braid diagram associated with~$\sig\ii$; for an arbitrary braid word~$\ww$, the diagram~$\Diag\ww$ is obtained by stacking one above the other the diagrams corresponding to the successive letters.}
\label{F:Sigma}
\end{figure}

Each strand in a braid diagram has a well-defined initial position, hereafter called its \emph{name}, and we can associate with each crossing of the diagram, hence with each letter in the braid word that encodes it,  the names of the strands involved in the crossing. As two strands may  cross more than once, we shall also include the rank of the crossing, thus using the name $\Name\pp\qq\aa$ for the $\aa$th crossing of the strands with initial positions~$\pp$ and~$\qq$. In this way, we associate with each positive braid word a sequence of names and, from there, we attribute names to the edges in any (braid) van Kampen diagram.

\begin{defi}[\bf name]
(Figure~\ref{F:BraidNames})
Let $\ee$ be an edge in a van Kampen diagram~$\KKK$ for~$\BP\nn$. Let $\ww$ be the braid word encoding a path~$\gamma$ that connects the source vertex of~$\KKK$ to the source vertex of~$\ee$. Then the \emph{name} of~$\ee$ is defined to be $\Name\pp\qq\aa$, where $\pp$ and~$\qq$ are the initial positions of the strands that finish at position~$\ii$ and~$\ii+1$ in the braid diagram associated with~$\ww$ and $\aa-1$ is the number of times the latter strands cross in this diagram. 
\end{defi}

\begin{figure}[htb]
\begin{picture}(64,22)(0,0)
\put(0,0.3){\includegraphics{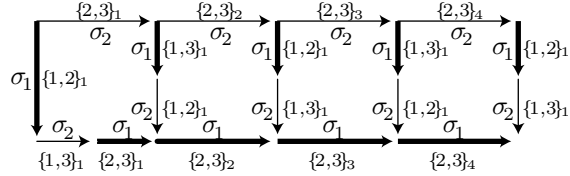}}
\put(6,19){$\sName231$}
\put(8,16){$\sig2$}
\put(22,19){$\sName232$}
\put(24,16){$\sig2$}
\put(38,19){$\sName233$}
\put(40,16){$\sig2$}
\put(54,19){$\sName234$}
\put(56,16){$\sig2$}
\put(-2.5,10){$\sig1$}
\put(1.5,10){$\sName121$}
\put(13.5,14){$\sig1$}
\put(17.5,14){$\sName131$}
\put(13.5,6){$\sig2$}
\put(17.5,6){$\sName121$}
\put(29.5,14){$\sig1$}
\put(33.5,14){$\sName121$}
\put(29.5,6){$\sig2$}
\put(33.5,6){$\sName131$}
\put(45.5,14){$\sig1$}
\put(49.5,14){$\sName131$}
\put(45.5,6){$\sig2$}
\put(49.5,6){$\sName121$}
\put(61.5,14){$\sig1$}
\put(65.5,14){$\sName121$}
\put(61.5,6){$\sig2$}
\put(65.5,6){$\sName131$}
\put(3,3.5){$\sig2$}
\put(1,-1){$\sName131$}
\put(11,3.5){$\sig1$}
\put(9,-1){$\sName231$}
\put(23,3.5){$\sig1$}
\put(21,-1){$\sName232$}
\put(39,3.5){$\sig1$}
\put(37,-1){$\sName233$}
\put(55,3.5){$\sig1$}
\put(53,-1){$\sName234$}
\end{picture}
\caption{\sf Attributing names to the edges in a braid van Kampen diagram (here a reversing diagram); for instance, the rightmost horizontal $\sig1$ edge on the bottom line receives the name $\Name234$ because, when one starts from the top left vertex, it corresponds to the fourth crossing of the strands that start at positions~$2$ and~$3$.}
\label{F:BraidNames}
\end{figure}

It is easy to check that the name of the edge~$\ee$ does not depend on the choice of the path~$\gamma$. The following relations immediately follow from the geometric meaning of the names and from the interpretation of~$\sig\ii$ in terms of strand crossing.

\begin{lemm}
\label{L:Names}
Assume that $\KKK$ is a van Kampen diagram for~$\BP\nn$, and $\ff$ is a face of~$\KKK$. If $\ff$ is a hexagon, \ie, if $\ff$ corresponds to a relation $\sig\ii\sig\jj \sig\ii = \sig\jj\sig\ii \sig\jj$ with $\vert\ii - \jj\vert = 1$, there exist pairwise distinct numbers~$\pp, \qq, \rr$ in~$\{1, ..., \nn\}$ and integers~$\aa, \bb, \cc$ such that the names of the edges bounding~$\ff$ respectively are
\begin{equation}
\label{E:Names1}
(\Name\pp\qq\aa, \Name\pp\rr\bb, \Name\qq\rr\cc)
\quad\mbox{and}\quad
(\Name\qq\rr\cc, \Name\pp\rr\bb, \Name\pp\qq\aa).
\end{equation}
Similarly, if $\ff$ is a square, \ie, if $\ff$ corresponds to a relation $\sig\ii\sig\jj = \sig\jj\sig\ii $ with $\vert\ii - \jj\vert \ge2$, there exist pairwise distinct numbers~$\pp, \qq, \rr, \ss$ in~$\{1, ..., \nn\}$ and integers~$\aa, \bb$ such that the names of the edges bounding~$\ff$ respectively are
\begin{equation}
\label{E:Names2}
(\Name\pp\qq\aa, \Name\rr\ss\bb)
\quad\mbox{and}\quad
(\Name\rr\ss\bb, \Name\pp\rr\aa).
\end{equation}
\end{lemm}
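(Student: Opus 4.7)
The plan is to read off the names of the edges bounding a face~$\ff$ directly from the strand-crossing interpretation of braid diagrams. I will fix such a face and a path~$\gamma$ from the source vertex of~$\KKK$ to the source vertex of~$\ff$, labelled by some word~$\uu \in \SSS^*$. By the definition of names, every edge adjacent to~$\ff$ carries a name of the form $\Name{\pp'}{\qq'}{\aa}$, where $\pp',\qq'$ are the initial positions of the two strands of~$\Diag\uu$ that meet at the position of the generator labelling the edge, and $\aa-1$ is the number of previous crossings of that pair along the path from the source of~$\KKK$ that ends at the edge in question.

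For a hexagonal face, the associated relation is $\sig\ii\sig\jj\sig\ii = \sig\jj\sig\ii\sig\jj$ with, say, $\jj = \ii+1$ (the case $\jj = \ii-1$ being symmetric). Let $\pp,\qq,\rr$ be the initial positions of the three strands of~$\Diag\uu$ occupying positions $\ii, \ii+1, \ii+2$ respectively; they are pairwise distinct. A direct tracking of the strand configuration after each successive generator shows that, along the top path of~$\ff$, the three successive crossings involve the pairs $\{\pp,\qq\}, \{\pp,\rr\}, \{\qq,\rr\}$, whereas, along the bottom path, the order is reversed: $\{\qq,\rr\}, \{\pp,\rr\}, \{\pp,\qq\}$. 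Hence the boundary names are of the form stated in~\eqref{E:Names1}, albeit possibly with distinct indices $\aa,\bb,\cc$ on top and $\aa',\bb',\cc'$ on the bottom.

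The main point is then to prove that corresponding indices coincide. For instance, $\aa$ counts the $\{\pp,\qq\}$-crossings along~$\gamma$ itself, whereas $\aa'$ counts them along~$\gamma$ extended by the first two edges of the bottom path of~$\ff$; but these two edges correspond to the crossings $\{\qq,\rr\}$ and $\{\pp,\rr\}$, neither of which involves the pair $\{\pp,\qq\}$. Hence $\aa = \aa'$. The same argument, applied to the analogous intermediate subpaths of the boundary of~$\ff$, yields $\bb = \bb'$ and $\cc = \cc'$.

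The square case proceeds in the same spirit but is simpler, since the two crossings occurring in the relation $\sig\ii\sig\jj = \sig\jj\sig\ii$ with $|\ii-\jj| \ge 2$ involve disjoint pairs of strands, so the counts on the top and bottom trivially match. Overall, there is no conceptual obstacle beyond the following bookkeeping observation, which I expect to be the main step: the segments of the boundary of~$\ff$ between two edges sharing the same strand pair contain no further crossing of that pair, so the counts accumulated along the two sides of the face remain synchronized.
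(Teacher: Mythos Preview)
Your argument is correct and is exactly the geometric strand-tracking computation the paper has in mind: the paper does not spell out a proof but merely states that it ``is essentially contained in the diagrams of Figure~\ref{F:Separatrices}'', and your bookkeeping of which pairs of strands cross along each side of the face, together with the observation that the intervening edges never involve the same pair (so the subscript counts stay synchronized), is precisely what that figure encodes. In short, you have written out in words what the paper leaves to a picture.
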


The proof is essentially contained in the diagrams of Figure~\ref{F:Separatrices}. Now comes a first optimality criterion.

\begin{prop}
\label{P:Optimal}
Call a family of names \emph{sparse} if it contains no name of the form~$\Name\pp\rr\cc$ whenever it contains $\Name\pp\qq\aa$ and $\Name\qq\rr\bb$. Then every van Kampen diagram~$\KKK$ with the property that there exists a sparse family~$\FF$ such that each face of~$\KKK$ entails exactly two names from~$\FF$ is optimal.
\end{prop}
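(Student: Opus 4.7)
The plan is to show that any van Kampen diagram $\KKK'$ with the same pair of boundary words as $\KKK$ has at least $|\KKK|$ faces. For any face $\ff$ in any such diagram, set $T(\ff) = |\{\nu \in \FF : \nu \text{ occurs as an edge-name on the boundary of } \ff\}|$. First I would invoke Lemma~\ref{L:Names} to obtain a universal upper bound $T(\ff) \le 2$: a hexagonal face carries three names whose underlying pairs form a triangle $\{\{p,q\},\{p,r\},\{q,r\}\}$, and sparseness of~$\FF$ forbids all three from belonging to~$\FF$; a square face carries only two distinct pair-types, so the bound is trivial. In particular, $\sum_{\ff \in \KKK'} T(\ff) \le 2|\KKK'|$ for every $\KKK'$.

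Next, the heart of the argument is a parity invariant derived again from Lemma~\ref{L:Names}: any face where two distinct names $\mu,\nu \in \FF$ both appear transposes their relative order between its top and bottom boundary paths. Letting $c_\KKK(\mu,\nu)$ be the number of faces of~$\KKK$ containing both~$\mu$ and~$\nu$, this forces $c_\KKK(\mu,\nu)$ to have parity equal to the indicator that $\mu,\nu$ appear in reversed relative orders in~$\ww$ versus in~$\www$---a quantity depending only on~$(\ww,\www)$. Consequently $c_{\KKK'}(\mu,\nu) \ge 1$ for every pair whose global order is actually reversed between~$\ww$ and~$\www$.

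To close the argument, I read the hypothesis as asserting, in addition to $T_\KKK(\ff) = 2$ on every face, that the assignment $\ff \mapsto \{\mu_\ff,\nu_\ff\}$ sending a face of~$\KKK$ to its two $\FF$-names is injective. Then each pair $\{\mu_\ff,\nu_\ff\}$ is transposed exactly once in~$\KKK$, hence its global order in~$\ww$ differs from that in~$\www$, so by the parity invariant $c_{\KKK'}(\mu_\ff,\nu_\ff) \ge 1$. Summing over the $|\KKK|$ faces of~$\KKK$,
\[
|\KKK| \;\le\; \sum_{\ff \in \KKK} c_{\KKK'}(\mu_\ff,\nu_\ff) \;=\; \sum_{\{\mu,\nu\} \in \FF^{(2)}} c_{\KKK'}(\mu,\nu)\cdot \mathbf{1}_{\{\mu,\nu\} \text{ is a face-pair of }\KKK}.
\]
On the other hand, each face of~$\KKK'$ contains at most $\binom{T(\ff)}{2} \le 1$ pair from~$\FF$, so the right-hand sum is bounded above by~$|\KKK'|$, yielding $|\KKK| \le |\KKK'|$.

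The main obstacle is step three: calibrating the reading of "entails exactly two names from $\FF$" so that the face-to-pair correspondence is indeed injective, which is essential for the parity lower bound to match the upper bound pair-by-pair. Indeed, if two faces of~$\KKK$ could share the same $\FF$-pair, the two swaps would cancel in parity and the lower bound $c_{\KKK'} \ge 1$ on that pair would fail; the separatrix picture underlying Figure~\ref{F:Separatrices} is precisely what encodes each $\FF$-pair as a single genuine crossing interaction, thereby enforcing this injectivity geometrically.
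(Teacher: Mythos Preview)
Your argument is correct and matches the paper's: both extract from the boundary words the subsequence of $\FF$-names, observe that sparseness forces each face (in any diagram) to transpose at most one $\FF$-pair, and conclude by comparing the resulting inversion count---which you access via the parity of $c_{\KKK'}(\mu,\nu)$, the paper more directly via the sequences $S_\FF(\ww_\ii)$. Your identification of the injectivity requirement is apt: the paper's assertion that ``the number of inversions between $S_\FF(\ww)$ and $S_\FF(\www)$ is $\mm$'' silently uses that no $\FF$-pair is swapped twice in~$\KKK$, a condition made explicit only in Corollary~\ref{C:Optimal} as ``any two $\FF$-separatrices cross at most once''.
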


\begin{proof}
Assume that $\KKK$ is a van Kampen diagram for~$(\ww, \www)$. Let $(\ww_0, \, ... \, , \ww_\mm)$ be a derivation from~$\ww$ to~$\www$ associated with~$\KKK$ as in Lemma~\ref{L:Derivation}. Let $\SS(\ww_\ii)$ be the sequence formed by the names of the successive letters of~$\ww_\ii$, and $\SS_\FF(\ww_\ii)$ be the subsequence of~$\SS(\ww_\ii)$ obtained by deleting all names that do not belong to~$\FF$.

By construction, the words~$\ww_\ii$ and~$\ww_{\ii+1}$ differ by exactly one braid relation, and the explicit formulas~\eqref{E:Names1} and~\eqref{E:Names2} imply that the sequence~$\SS(\ww_{\ii+1})$ is obtained from the sequence~$\SS(\ww_\ii)$ by reversing either a triple of names, or a pair of names. Moreover, under the assumption of the proposition, the sequence~$\SS_\FF(\ww_{\ii+1})$ is obtained from the sequence~$\SS_\FF(\ww_\ii)$ by reversing exactly one pair of names in every case. Therefore, the number of inversions between~Ê$\SS_\FF(\ww)$ and~$\SS_\FF(\www)$ is~$\mm$.

The hypothesis that $\FF$ is sparse implies that one braid relation can cause at most one inversion in an $\SS_\FF$~sequence (whereas it may cause three inversions in an $\SS$~sequence). Therefore, it is impossible to go from~$\ww$ to~$\www$ by using less that $\mm$~relations. In other words, we have $\dist(\ww, \www) = \mm$.
\end{proof}

Before giving examples, we reformulate the criterion of Proposition~\ref{P:Optimal} in more geometric terms. The formulas of Lemma~\ref{L:Names} show that, in every face of a braid van Kampen diagram, the same names occur on both sides, but in reversed order, as shown in Figure~\ref{F:Separatrices}. For each name~$\Name\pp\qq\aa$ occurring in~$\KKK$, connecting the middles of the edges with that name provides a curve, hereafter denoted~$\SEP\pp\qq\aa$, which is transversal to the edges of the diagram. Such curves are similar to the \emph{separatrices} of~\cite{Dhx} (which correspond to the special case of so-called simple braids), and we shall use the same terminology here. Then the geometric meaning of~\eqref{E:Names1} and~\eqref{E:Names2} is then that, in each hexagon, three separatrices cross each other whereas, in each square, two separatrices cross.

\begin{figure}[htb]
\begin{picture}(81,30)(0,0)
\put(0,0.3){\includegraphics{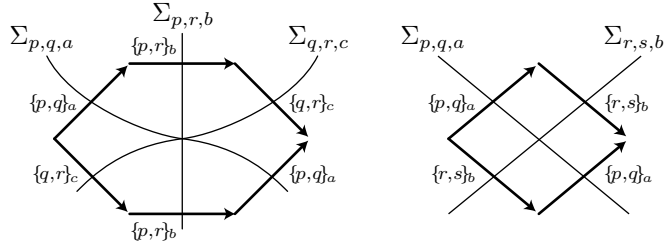}}
\put(-1.5,16.5){$\sName\pp\qq\aa$}
\put(-1,7){$\sName\qq\rr\cc$}
\put(-4,25){$\Sep_{\pp, \qq, \aa}$}
\put(12,24){$\sName\pp\rr\bb$}
\put(12,-0.5){$\sName\pp\rr\bb$}
\put(15,28){$\Sep_{\pp, \rr, \bb}$}
\put(33,16.5){$\sName\qq\rr\cc$}
\put(33,7){$\sName\pp\qq\aa$}
\put(33,25){$\Sep_{\qq, \rr, \cc}$}
\put(51.5,16.5){$\sName\pp\qq\aa$}
\put(52,7){$\sName\rr\ss\bb$}
\put(48,25){$\Sep_{\pp, \qq, \aa}$}
\put(75,16.5){$\sName\rr\ss\bb$}
\put(75,7){$\sName\pp\qq\aa$}
\put(75,25){$\Sep_{\rr, \ss, \bb}$}
\end{picture}
\caption{\sf Separatrices in a van Kampen diagram for~$\BP\nn$: applying one braid relation reverses the sequence of names of the edges, so, by connecting the edges with the same name, we obtain curves, called separatrices, that cross in the middle of the face.}
\label{F:Separatrices}
\end{figure}

In this context, Proposition~\ref{P:Optimal} can be reformulated in the language of separatrices. If $\FF$ is a family of names, we naturally say that a separatrix is an \emph{$\FF$-separatrix} if it corresponds to a name belonging to~$\FF$. Owing to the subsequent applications, we state the result for a reversing diagram.  

\begin{coro}
\label{C:Optimal}
Assume that $\ww, \www$ are positive braid words and there exists a sparse family of names~$\FF$ such that each face of the reversing diagram for~$\ww\inv \www$ contains exactly one crossing of $\FF$-separatrices, and any two $\FF$-separatrices cross at most once in that diagram. Then reversing is optimal for~$(\ww, \www)$.
\end{coro}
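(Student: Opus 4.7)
The plan is to deduce the statement directly from Proposition~\ref{P:Optimal} by translating the separatrix hypotheses into the name hypotheses. Assume (implicitly) that $\ww \eqp \www$, so that Artin's presentation being complete and every positive reversing sequence terminating forces $\ww\inv\www \revR \ew$; the reversing diagram is then a genuine van Kampen diagram~$\KKK$ for~$(\ww,\www)$, with some number of faces~$\mm = \distRrev(\ww,\www)$.

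First I would check the translation of the first hypothesis. By Lemma~\ref{L:Names}, every face of~$\KKK$ carries $k \in \{2,3\}$ distinct names (one per pair of opposite sides), and two $\FF$-separatrices cross inside the face exactly when both of their names label edges of that face; the number of $\FF$-crossings inside a face is therefore~$\binom{k_\FF}{2}$, where $k_\FF$ is the number of $\FF$-names among its edges. Sparseness of~$\FF$ forbids $k_\FF = 3$ in a hexagon, since the three names of a hexagon form a triple $\Name\pp\qq\aa,\Name\pp\rr\bb,\Name\qq\rr\cc$ to which the sparseness prohibition applies. Hence the condition ``exactly one $\FF$-crossing per face'' is equivalent to ``$k_\FF = 2$ in every face,'' which is precisely the hypothesis of Proposition~\ref{P:Optimal}.

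Next I would run the inversion-counting argument underlying Proposition~\ref{P:Optimal}. By Lemma~\ref{L:Names}, in every face the transformation from one bounding side to the other flips exactly one adjacent pair of $\FF$-names in the reduced sequence $\SS_\FF$, so the $\mm$ faces of~$\KKK$ produce $\mm$ adjacent $\FF$-transpositions taking $\SS_\FF(\ww)$ to $\SS_\FF(\www)$. For an unordered pair of $\FF$-names, the number of times it is transposed along the way equals the number of times the two corresponding $\FF$-separatrices cross inside~$\KKK$; the at-most-once hypothesis caps each such count at~$1$, so parity equals count, and the total number of inversions between $\SS_\FF(\ww)$ and~$\SS_\FF(\www)$ equals~$\mm$. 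Any alternative derivation of $\ww \eqp \www$ causes at most one adjacent $\FF$-transposition per relation (by sparseness of~$\FF$), so must use at least $\mm$ relations to realise the same inversion count. This gives $\dist(\ww,\www) = \mm = \distRrev(\ww,\www)$, which is the claimed optimality.

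The main obstacle is the parity-equals-count identification: one must pin down that the transposition count of a given pair of $\FF$-names, along any derivation read off from~$\KKK$, equals the geometric crossing number of the two corresponding $\FF$-separatrices in~$\KKK$, independently of how the tiles are enumerated. Once this invariant is isolated, the remaining steps are direct consequences of Lemma~\ref{L:Names} and the definition of sparseness.
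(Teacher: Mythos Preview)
Your approach matches the paper's: the corollary is presented there as a direct geometric reformulation of Proposition~\ref{P:Optimal}, obtained by translating ``exactly two $\FF$-names per face'' into ``exactly one crossing of $\FF$-separatrices per face,'' and you carry out that translation correctly. You are in fact more explicit than the paper about why the inversion count between $\SS_\FF(\ww)$ and $\SS_\FF(\www)$ equals the number~$\mm$ of faces: your use of the ``at most once'' hypothesis to force parity to equal count is exactly what is needed to pass from ``$\mm$ adjacent $\FF$-transpositions'' to ``$\mm$ inversions,'' a step the paper's proof of Proposition~\ref{P:Optimal} leaves implicit.

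One genuine oversight: you should not assume $\ww \eqpR \www$. The corollary is applied in the paper (Figures~\ref{F:Optimal1} and~\ref{F:Optimal2}) to pairs such as $\ww = (\sig2\sigg12\sig2)^\mm$, $\www = \sigg1{2\mm}$, or $\ww = \sigg1{2\mm}$, $\www = \sigg2{2\mm}$, which are not equivalent; in those cases $\ww\inv\www$ reverses to a nontrivial $\vvv\vv\inv$, and the reversing diagram is a van Kampen diagram for the pair $(\ww\vvv, \www\vv)$, not for $(\ww,\www)$. ``Reversing is optimal for $(\ww,\www)$'' then means $\compl(\ww,\www) = \distR(\ww\vvv, \www\vv)$. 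Your argument goes through unchanged once you replace the boundary pair $(\ww,\www)$ by $(\ww\vvv,\www\vv)$: the sequences $\SS_\FF$ are read along the top and bottom of the full reversing rectangle, and everything else is identical. The ``obstacle'' you flag (identifying the transposition count of a pair with the crossing number of the corresponding separatrices) is not really one: by Lemma~\ref{L:Names}, a face swaps two $\FF$-names in $\SS_\FF$ exactly when both names label its edges, which is exactly when the two separatrices cross in that face.
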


\begin{exam}
\label{X:Optimal}
Reversing the braid word $(\sig2 \sigg12 \sig2)^{-\mm} \sigg1{2\mm}$ leads to the equivalent braid words $(\sig2 \sigg12 \sig2)^\mm \sigg1{2\mm}$ and $\sigg1{2\mm} (\sig2 \sigg12 \sig2)^\mm$, as shown in Figure~\ref{F:Optimal1}. Let $\FF$ consists of the names~$\Name12\aa$ and~$\Name23\bb$. Then $\FF$ is sparse, and the diagram of Figure~\ref{F:Optimal1} satisfies the requirements of Corollary~\ref{C:Optimal}. Hence this diagram is an optimal van Kampen diagram, \ie, we have
$$\dist((\sig2 \sigg12 \sig2)^\mm \sigg1{2\mm}, \sigg1{2\mm} (\sig2 \sigg12 \sig2)^\mm) = 4\mm^2.$$
This gives a short proof of the result of~\cite{HKN}.
\end{exam} 

\begin{figure}[htb]
\begin{picture}(66,72)(0,0)
\put(0,0){\includegraphics{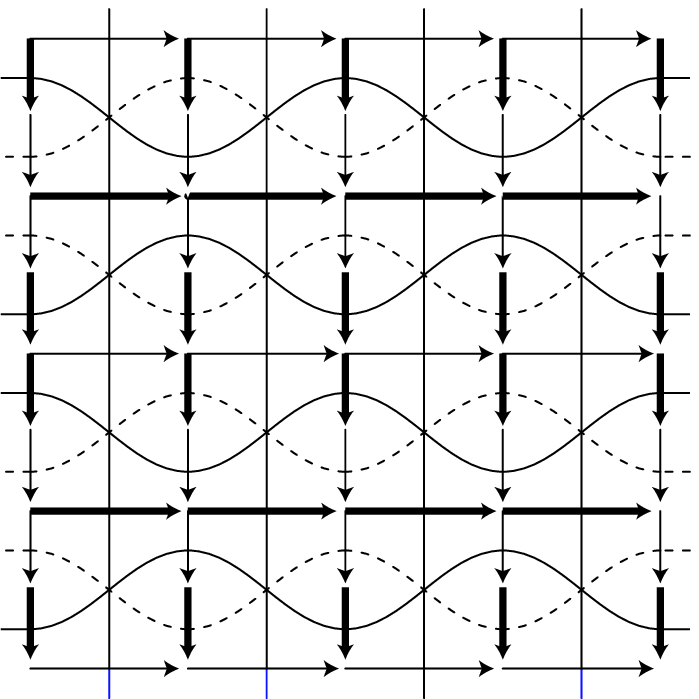}}
\put(8,71){$\sName121$}
\put(24,71){$\sName122$}
\put(40,71){$\sName123$}
\put(56,71){$\sName124$}
\put(-7,63){$\sName231$}
\put(-7,55){$\sName131$}
\put(-7,47){$\sName132$}
\put(-7,39){$\sName232$}
\put(-7,31){$\sName233$}
\put(-7,23){$\sName133$}
\put(-7,15){$\sName134$}
\put(-7,7){$\sName234$}
\end{picture}
\caption{\sf Reversing diagram for the braid words of Example~\ref{X:Optimal} (here with $\mm = 2$). Thin edges represent~$\sig1$, thick edges represent~$\sig2$. The useful separatrices are thin plain lines: each hexagon contains one crossing of such lines, and any two of them cross at most one (we ignore the separatrices with name $\Name13\cc$, drawn in dotted line). By Corollary~\ref{C:Optimal}, the diagram is optimal, \ie, it achieves the combinatorial distance.}
\label{F:Optimal1}
\end{figure}

Another example is shown in Figure~\ref{F:Optimal2}. Here one simply starts with the braid words~$\sigg1{2\mm}$ and $\sigg2{2\mm}$, and the conclusion is again that reversing is optimal; here also, we consider the separatrices with names~$\Name12\aa$ and~$\Name23\bb$.

\begin{figure}[htb]
\begin{picture}(72,72)(0,0)
\put(0,0){\includegraphics{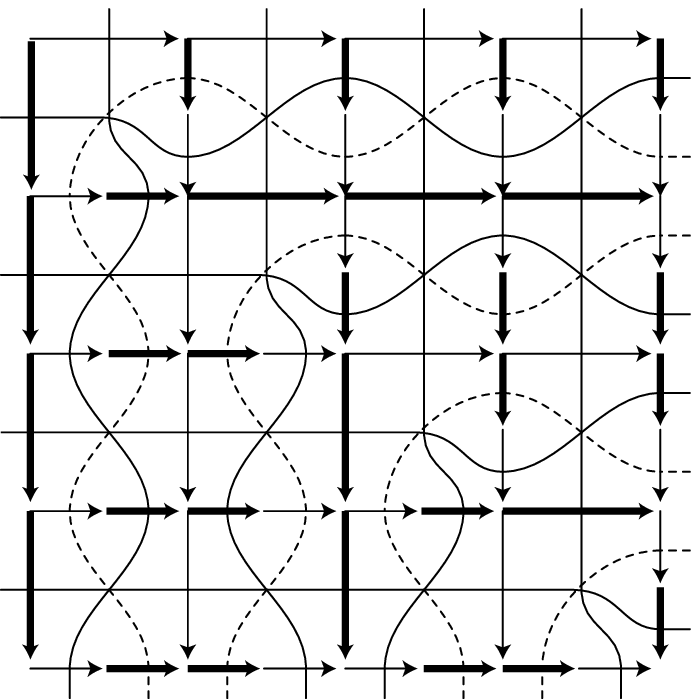}}
\put(8,71){$\sName121$}
\put(24,71){$\sName122$}
\put(40,71){$\sName123$}
\put(56,71){$\sName124$}
\put(-7,59){$\sName231$}
\put(-7,43){$\sName232$}
\put(-7,27){$\sName233$}
\put(-7,11){$\sName234$}
\put(71,55){$\sName131$}
\put(71,46){$\sName132$}
\put(71,22){$\sName133$}
\put(71,14){$\sName134$}
\end{picture}
\caption{\sf Reversing from $\sigg2{-2\mm} \sigg1{2\mm}$ is optimal. Here again, we consider the separatrices with names $\Name12\aa$ and $\Name23\bb$, and forget about those with name~$\Name13\cc$.}
\label{F:Optimal2}
\end{figure}

The above optimality results are quite partial since they only involve the very specific case of Artin--Tits braid monoids. We refer to~\cite{Dhx} for further results, in a case that is still more restricted, namely that of simple braid words, \ie, positive braid words corresponding to braid diagrams in which any two strands cross at most once. In this case, which is equivalent to the case of reduced decompositions of permutations into products of transpositions, the names are all of the form~$\Name\pp\qq1$ and simple optimality criteria can be stated: for instance, the hypothesis that any two separatrices cross at most once guarantees optimality. An interesting feature is that, in some results, the \emph{metric} aspects of the reversing diagrams---as opposed to their topological aspects---play a crucial role.

\section{Conclusion}

In good cases, namely for complete presentations, subword reversing can be used to investigate a presented semigroup and its possible group of fractions, mainly to prove cancellativity, to solve word problems, to recognize specific families such as Garside structures, to compute in such structures, possibly to obtain optimal derivations. It seems reasonable to hope for more applications in the future. 

A last comment is in order. Once completeness is granted, using words and reversing is essentially equivalent to using elements of the monoid and common multiples. However, before completeness is established,  it is crucial to distinguish between words and the elements they represent: reversing equivalent words need not lead to equivalent results in general, and subword reversing is really an operation on words, which in general makes no sense at the level of the elements of the associated semigroup or group.


\begin{thebibliography}{99}

\def\Reff#1; #2; #3; #4; #5; #6; #7\par{%
\bibitem{#1} #2, \emph{#3}, #4 {\bf #5} (#6) #7}

\def\Ref#1; #2; #3; #4\par{%
\bibitem{#1} #2, \emph{#3}, #4}

\Reff Adj; S.I. Adyan; On the embeddability of monoids; Soviet. Math. Dokl.; 1-4; 1960; 819--820.

\Reff AdjNF; S.I.\,Adyan; Fragments of the word Delta in a braid group; Mat. Zam. Acad. Sci. SSSR; 36-1; 1984; 25--34; translated Math. Notes of the Acad. Sci. USSR; 36-1 (1984) 505--510.

\Ref Aut; M.\,Autord; Comparing Gr\"obner bases and word reversing; math.0712.0525, Southeast Asian Bull. Math., to appear.

\Ref AutT; M.\,Autord; Aspects algorithmiques du retournement de mot; PhD Thesis, Universit\'e de Caen; 2009.

\Ref Dhx; M.\,Autord \& P.\,Dehornoy; On the distance between the expressions of a permutations; arXiv: math.CO/0902.3074.

\Ref Bes; D.\,Bessis; Garside categories, periodic loops and cyclic sets; math.GR/0610778.

\Reff BrS; E. Brieskorn \& K. Saito; Artin-Gruppen und Coxeter-Gruppen; Invent. Math.; 17; 1972; 245--271.

\Reff CMW; R.\,Charney, J.\,Meier \& K.\,Whittlesey; Bestvina's normal form complex and the homology of Garside groups; Geom. Dedicata; 105; 2004; 171-188.

\Ref Cho; F.\,Chouraqui; Garside groups and Yang--Baxter equations; Comm. Algebra; to appear.

\Ref ClP; A.H.\,Clifford \& G.B.\,Preston; The algebraic Theory of Semigroups, vol.~1; Amer. Math. Soc. Surveys {\bf 7}, (1961).

\Reff Cor; R.\,Corran; A normal form for a class of monoids including the singular braid monoids; J. Algebra; 223; 2000; 256--282.

\Reff Dez; P.\,Dehornoy; Preuve de la conjecture d'irr\'eflexivit\'e  pour les structures distributives libres;  C. R. Acad. Sci. Paris; 314; 1992; 333--336.

\Reff Dfa; P.\,Dehornoy; Deux propri\'et\'es des groupes de tresses; C. R. Acad. Sci. Paris; 315; 1992; 633--638.

\Reff Dfb; P.\,Dehornoy; Braid groups and left distributive  operations; Trans. Amer. Math. Soc.; 345-1; 1994; 115--151. 

\Reff Dff; P.\,Dehornoy; Groups with a complemented presentation; J. Pure Appl. Algebra; 116; 1997; 115--137.

\Reff Dfo; P.\,Dehornoy; A fast method for comparing braids; Advances in Math.; 125; 1997; 200--235.

\Reff Dgc; P.\,Dehornoy; On completeness of word reversing; Discrete Math.; 225; 2000; 93--119.

\Ref Dgd; P.\,Dehornoy; Braids and Self-Distributivity; Progress in Math. vol. 192, Birkh\"auser (2000).

\Reff Dgk; P.\,Dehornoy; Groupes de Garside;  Ann. Scient. Ec. Norm. Sup.; 35; 2002; 267--306.

\Reff Dgp; P.\,Dehornoy; Complete positive group presentations; J.  Algebra; 268;
2003; 156--197.

\Ref Dhw; P.\,Dehornoy; On the rotation distance between binary trees; Advances in Math., to appear; math.CO/0901.2557.

\Reff Dht; P.\,Dehornoy; Left-Garside categories, self-distributivity, and braids; Ann. Math. Blaise Pascal; 16; 2009; 189--244.

\Reff Dgl; P.\,Dehornoy \& Y. Lafont; Homology of Gaussian groups;  Ann. Inst. Fourier; 53-2; 2003; 1001--1052.

\Reff Dfx; P.\,Dehornoy \& L. Paris; Gaussian groups and Garside groups, two generalizations of Artin groups; Proc. London Math. Soc.; 79-3; 1999; 569--604.

\Reff Dhg; P.\,Dehornoy \& B.\,Wiest; On word reversing in braid groups; Int. J. Algebra Comput.; 16(5); 2006; 931--947.

\Reff Dlg; P.\,Deligne; Les immeubles des groupes de tresses g\'en\'eralis\'es; Invent. Math.; 17; 1972; 273--302.

\Ref DiM; F.\,Digne \& J.\,Michel; Garside and locally Garside categories; math.GR/0612652.

\Reff ElM; E. A. Elrifai \& H. R. Morton; Algorithms for positive braids; Quart. J. Math. Oxford; 45-2; 1994; 479--497.

\Ref Eps; D.\,Epstein, with J.\,Cannon, D.\,Holt, S.\,Levy, M.\,Paterson \& W.\,Thurston; Word Processing in Groups; Jones \& Bartlett Publ. (1992).

\Reff FrG; N.\,Franco \& J.\,Gonz\'alez-Meneses; Conjugacy problem for braid groups and Garside groups; J. Algebra; 266-1; 2003; 112--132.

\Ref GarT; F.A.\,Garside; The theory of knots and associated problems; PhD thesis, Oxford (1965). 

\Reff Gar; F.A.\,Garside; The braid group and other groups; Quart. J. Math. Oxford; 20-78; 1969; 235--254.

\Reff Geb; V.\,Gebhardt; A new approach to the conjugacy problem in Garside groups; J. Algebra; 292-1; 2005; 282--302.

\Ref GeG; V.\,Gebhardt \& J.\,Gonz\'alez-Meneses; The cyclic sliding operation in Garside groups; Math. Zeitschr., to appear. 
 
\Ref HKN; J.\,Hass, A.\,Kalka, and T.\,Nowik; Complexity of relations in the braid group; math.GR/0906.0137.

\Ref Hum;  J.E.\,Humphreys; Reflection Groups and Coxeter Groups; Cambridge Univ. Texts; 1989.

\Reff Krb; D.\,Krammer; Braid groups are linear; Ann. Math.; 151-1; 2002; 131--156.

\Reff Kra; D.\,Krammer; A class of Garside groupoid structures on the pure braid group; Trans. Amer. Math. Soc.; 360; 2008; 4029-4061.

\Ref LyS; R.C.\,Lyndon and P.E.\,Schupp; Combinatorial Group Theory; Springer-Verlag; 1977, reprinted in 2001.

\Ref McC; J.\,McCammond; An introduction to Garside structures; Preprint (2005).

\Reff Par; L.\,Paris; Artin monoids inject in their groups;  Comment. Math. Helv.; 77; 2002; 609-637. 

\Reff Pim; M.\,Picantin; Garside monoids vs. divisibility monoids; Math. Struct. in Comp. Sci.; 15-2; 2005; 231-242.

\Reff Rem;  J.H.\,Remmers; On the geometry of semigroup presentations; Advances in Math.; 36; 1980; 283--296.

\Reff Squ; C. Squier; The homological algebra of Artin groups; Math. Scand.; 75; 1995; 5--43.

\Reff Tat; K.\,Tatsuoka; An isoperimetric inequality for Artin groups of finite type; Trans. Amer. Math. Soc.; 339--2; 1993; 537--551.

\end{thebibliography}
\end{document}